\newtheorem{theorem}{Theorem}[section]
\newtheorem{lemma}[theorem]{Lemma}
\newtheorem{proposition}[theorem]{Proposition}
\theoremstyle{definition}
\theoremstyle{remark}
\newtheorem*{note*}{Note}
\numberwithin{equation}{section}
\newcommand{\rank}{\mathop{\operator@font rank}}
\newcommand{\conv}{\mathop{\operator@font conv}}
\newcommand{\vol}{\mathrm{vol}}
\newcommand{\onetagright}{\tagsleft@false}
\newcommand{\ls}{\leqslant}
\newcommand{\gr}{\geqslant}
\newcommand{\vrad}{{\rm vrad}}
\begin{document}
\small

\title{\bf Sections and projections of the outer and inner regularizations of a convex body}

\author{Natalia Tziotziou}

\date{}

\maketitle

\begin{abstract}
\footnotesize We establish new geometric inequalities comparing the volumes of sections and projections of a convex body, 
whose barycenter or Santal\'o point is at the origin, with those of its inner and outer regularizations. 
We also provide functional extensions of these inequalities to the setting of log-concave functions. 
Our approach relies on the recent optimal $M$-estimate of Bizeul and Klartag for isotropic convex bodies.
\end{abstract}

%%%%%%%%%%%%%%%%%%%%%%%%%%%%%%%%%%%%%%%%%%%%%%%%%%%%%%%%%%%%%%%%%%%%%%%%%%%%%%%%%%%%%%%%%%%%%%%%%%%%%%%%%%%%%%%%%%%%%%%%%%
\section{Introduction}\label{section-1}
%%%%%%%%%%%%%%%%%%%%%%%%%%%%%%%%%%%%%%%%%%%%%%%%%%%%%%%%%%%%%%%%%%%%%%%%%%%%%%%%%%%%%%%%%%%%%%%%%%%%%%%%%%%%%%%%%%%%%%%%%%

The purpose of this work is to establish new geometric inequalities comparing 
the volumes of sections and projections of a convex body---whose barycenter or 
Santal\'{o} point lies at the origin---with those of its inner and outer regularizations. 
Our method highlights the role of isotropic normalization as a unifying 
tool bridging symmetric and general convex bodies. 
These results extend naturally to the functional setting, 
yielding analogous inequalities for log-concave functions that satisfy 
volume-type bounds similar to those for convex bodies. 
A key ingredient in our approach is the recent optimal $M$-estimate 
of Bizeul and Klartag for isotropic convex bodies, which provides the 
quantitative foundation for our main results.

Throughout, let $K$ be a convex body in $\mathbb{R}^n$ with $0 \in {\rm int}(K)$. We define the \emph{outer} and \emph{inner regularizations} of $K$ by
\[
K_{\mathrm{out}} = {\rm conv}(K, -K)
\quad\text{and}\quad
K_{\mathrm{in}} = K \cap (-K).
\]
Clearly, $K_{\mathrm{in}} \subseteq K \subseteq K_{\mathrm{out}}$, and both $K_{\mathrm{out}}$ and $K_{\mathrm{in}}$ are origin-symmetric. We shall frequently use the duality relations
\begin{equation}\label{eq:duality}
(K_{\mathrm{out}})^{\circ} = (K^{\circ})_{\mathrm{in}}
\quad\text{and}\quad
(K_{\mathrm{in}})^{\circ} = (K^{\circ})_{\mathrm{out}},
\end{equation}
which follow from the identities
\[
({\rm conv}(A \cup B))^{\circ} = A^{\circ} \cap B^{\circ}
\quad\text{and}\quad
(A \cap B)^{\circ} = {\rm conv}(A^{\circ} \cup B^{\circ})
\]
valid for convex bodies $A,B$ with $0 \in {\rm int}(A \cap B)$. Here $A^{\circ}$ denotes the polar body of $A$.

We write ${\rm bar}(A)$ for the barycenter
\[
{\rm bar}(A) = \frac{1}{\vol_n(A)} \int_A x \, dx
\]
of a convex body $A \subset \mathbb{R}^n$, and $\vrad(A)$ for its \emph{volume radius},
\[
\vrad(A) = \left( \frac{\vol_n(A)}{\vol_n(B_2^n)} \right)^{1/n},
\]
where $B_2^n$ denotes the Euclidean unit ball and $\vol_n$ stands for $n$-dimensional volume.

\medskip

Our first result compares the volumes of the projections of $K_{\mathrm{out}}$ and $K_{\mathrm{in}}$ onto a $k$-dimensional subspace $H$ of $\mathbb{R}^n$.

\begin{theorem}\label{th:projections}
Let $K$ be a convex body in $\mathbb{R}^n$ with either ${\rm bar}(K) = 0$ or ${\rm bar}(K^{\circ}) = 0$. Then, for every $1 \ls k \ls n-1$ and any $H \in G_{n,k}$,
\[
\vrad(P_H(K_{\mathrm{out}})) \ls \frac{n g(n)}{k} \, \vrad(P_H(K_{\mathrm{in}})),
\]
where $g(n) \ls C (\ln n)^3$.
\end{theorem}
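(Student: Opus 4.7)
My plan is to first reduce the two hypotheses, ${\rm bar}(K)=0$ and ${\rm bar}(K^\circ)=0$, to a common centered framework via the duality relations \eqref{eq:duality}. Since $P_H K_{\mathrm{in}}$ and $P_H K_{\mathrm{out}}$ are both centrally symmetric in $H$, applying the Blaschke--Santal\'o and Bourgain--Milman inequalities inside $H$, together with the identities $(P_H K_{\mathrm{out}})^\circ_H = (K^\circ)_{\mathrm{in}} \cap H$ and $(P_H K_{\mathrm{in}})^\circ_H = (K^\circ)_{\mathrm{out}} \cap H$, shows that the ratio $\vrad(P_H K_{\mathrm{out}}) / \vrad(P_H K_{\mathrm{in}})$ is comparable, up to an absolute constant, to the section-type ratio $\vrad((K^\circ)_{\mathrm{out}} \cap H) / \vrad((K^\circ)_{\mathrm{in}} \cap H)$. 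Hence the hypothesis ${\rm bar}(K^\circ)=0$ reduces to a section inequality for the centered body $K^\circ$, while ${\rm bar}(K)=0$ remains the direct projection inequality for the centered body $K$. Both reductions can be approached in parallel by placing the relevant centered body in isotropic position.

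Focusing on the direct projection case under ${\rm bar}(K)=0$, I would normalize $\vol_n(K)=1$ and bring $K$ to isotropic position by an $SL(n)$-transformation. Urysohn's inequality in $H$ gives
\[
\vrad(P_H K_{\mathrm{out}}) \ls M^*_H(K_{\mathrm{out}}) \ls 2\, M^*_H(K),
\]
since $h_{K_{\mathrm{out}}}(\theta) = \max(h_K(\theta), h_K(-\theta))$. The lower bound on the denominator follows from the Bourgain--Milman inequality in $H$ applied to the symmetric body $P_H K_{\mathrm{in}}$:
\[
\vrad(P_H K_{\mathrm{in}}) \gr \frac{c}{\vrad((K^\circ)_{\mathrm{out}} \cap H)}.
\]
The theorem therefore reduces to showing
\[
M^*_H(K) \cdot \vrad((K^\circ)_{\mathrm{out}} \cap H) \ls C\, \frac{n}{k}(\log n)^3
\]
uniformly in $H$. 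The Bizeul--Klartag $M$-estimate for isotropic $K$, combined with the Klartag--Lehec bound on the isotropic constant, provides the sharp control on $M^*(K)$, while a dual application (or an analogous section estimate for the centered body $K^\circ$ in the second case) governs the second factor.

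The principal obstacle is that the Bizeul--Klartag estimate is stated as an average over the unit sphere, whereas the conclusion of the theorem must hold for every $H \in G_{n,k}$. I expect this to be resolved by applying the $M$-estimate to the $L_q$-centroid bodies $Z_q(K)$ (noting that $Z_\infty(K) = K_{\mathrm{out}}$) for a suitably chosen $q$ of order $n$, and exploiting Paouris-type uniform bounds on their $k$-dimensional projections. The factor $n/k$ arises from converting the $n$-dimensional $M^*$-bound of order $\sqrt{n}$ into a $k$-dimensional one (contributing a $\sqrt{n/k}$ loss) and combining with the dual section estimate (contributing another $\sqrt{n/k}$). The cumulative $(\log n)^3$ factor in $g(n)$ reflects three independent logarithmic losses: one from the Bizeul--Klartag estimate itself, one from the reverse Santal\'o step, and one from the passage from averaged to uniform statements on the Grassmannian.
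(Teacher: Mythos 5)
You correctly reduce to isotropic position and identify the right raw inputs (the Bizeul--Klartag bound for $M(K)$ and E.~Milman's bound for $M^*(K)$), and you rightly flag the real obstacle: these are spherical averages, while the conclusion must hold for \emph{every} $H \in G_{n,k}$. However, your proposed resolution does not close this gap, and it is not the route the paper takes.

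The Urysohn step $\vrad(P_H(K_{\mathrm{out}})) \ls M^*_H(K_{\mathrm{out}})$ replaces the volume radius of the projection by the mean width of the projection, i.e.\ the average of $h_K$ over $S^{n-1}\cap H$. This is a strictly larger quantity, and for a \emph{worst-case} subspace $H$ it is not controlled by $\sqrt{k/n}\,M^*(K)$: the random-subspace concentration that underlies that heuristic fails uniformly, and $M^*_H(K)$ can be as large as $R(K)$, which for isotropic $K$ is of order $n$. The passage to $L_q$-centroid bodies and ``Paouris-type uniform bounds'' is left unspecified, and it is not clear how it would produce a bound of order $\sqrt{n/k}\cdot\sqrt{n}\,\mathrm{polylog}(n)$ that holds for every $H$. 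Similarly, your reduction of the numerator/denominator ratio to $\vrad((K^\circ)_{\mathrm{out}}\cap H)/\vrad((K^\circ)_{\mathrm{in}}\cap H)$ via Blaschke--Santal\'o and Bourgain--Milman in $H$ is valid, but it merely trades the projection inequality for a section inequality for the centered body $K^\circ$, which you never prove; in the paper the logical direction is the reverse (Theorem~\ref{th:sections} is deduced from Theorem~\ref{th:projections}), so your plan risks circularity.

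The missing idea is the covering-number (Sudakov / dual Sudakov) argument of Proposition~\ref{prop:M-position}. The global estimates $M^*(K)\ls c\sqrt{n}(\ln n)^2$ and $M(K)\ls c\frac{\ln n}{\sqrt n}$ convert, via Sudakov and dual Sudakov, into
\[
N(K_{\mathrm{out}},\, t\, r_n B_2^n) \ls \exp(\gamma_n^2 n/t^2),
\qquad
N(r_n B_2^n,\, t\, K_{\mathrm{in}}) \ls \exp(\delta_n^2 n/t^2),
\]
with $\gamma_n \ls c(\ln n)^2$ and $\delta_n \ls c\ln n$. A covering of a body by translates of another projects, for \emph{every} $H$, to a covering of the projections; this produces volume bounds on $P_H(K_{\mathrm{out}})$ and $P_H(K_{\mathrm{in}})$ that are uniform in $H$ with no averaging loss. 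Optimizing $t \approx \sqrt{n/k}$ in each covering bound yields $\vrad(P_H(K_{\mathrm{out}})) \ls e\, r_n \gamma_n \sqrt{n/k}$ and $r_n \ls e\, \delta_n \sqrt{n/k}\, \vrad(P_H(K_{\mathrm{in}}))$, and combining the two gives exactly the factor $\frac{n}{k}\,\gamma_n\delta_n \ls c\frac{n}{k}(\ln n)^3$. The case $\operatorname{bar}(K^\circ)=0$ is handled by placing $K^\circ$ in isotropic position and applying the dual pair of covering estimates. Your reduction framework is salvageable, but you need to replace Urysohn plus averaging with the covering-number mechanism.
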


A related estimate was obtained by Vritsiou in \cite[Corollary~11]{Vritsiou-2024}, who proved that if ${\rm bar}(K) = 0$ then
\[
\vrad(P_H(K_{\mathrm{out}}))
\ls 
\left( \frac{n}{k} \right)^5 (\ln (en/k))^2 \, \vrad(P_H(K_{\mathrm{in}}))
\]
for all $1 \ls k \ls n-1$ and $H \in G_{n,k}$. Theorem~\ref{th:projections} thus yields an estimate that is nearly linear in $n/k$.

\medskip

Our second result concerns sections of $K_{\mathrm{out}}$ and $K_{\mathrm{in}}$ by a $k$-dimensional subspace $H \subset \mathbb{R}^n$.

\begin{theorem}\label{th:sections}
Let $K$ be a convex body in $\mathbb{R}^n$ with either ${\rm bar}(K) = 0$ or ${\rm bar}(K^{\circ})=0$. 
Then, for every $1 \ls k \ls n-1$ and any $H \in G_{n,k}$,
\[
\vrad(K_{\mathrm{out}} \cap H)
\ls 
\frac{n g(n)}{k} \, \vrad(K_{\mathrm{in}} \cap H),
\]
where $g(n) \ls C (\ln n)^3$.
\end{theorem}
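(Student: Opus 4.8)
The plan is to deduce Theorem~\ref{th:sections} from Theorem~\ref{th:projections} by dualizing: sections of a body correspond to projections of its polar. Concretely, for any $H \in G_{n,k}$ and any convex body $A$ in $\mathbb{R}^n$ with $0 \in {\rm int}(A)$ one has the standard identity
\[
(A \cap H)^{\circ_H} = P_H(A^{\circ}),
\]
where $\circ_H$ denotes polarity taken inside $H$. Applying this with $A = K_{\mathrm{out}}$ and $A = K_{\mathrm{in}}$ and invoking the duality relations \eqref{eq:duality}, I obtain
\[
(K_{\mathrm{out}} \cap H)^{\circ_H} = P_H\big((K^{\circ})_{\mathrm{in}}\big)
\quad\text{and}\quad
(K_{\mathrm{in}} \cap H)^{\circ_H} = P_H\big((K^{\circ})_{\mathrm{out}}\big).
\]

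Since $K_{\mathrm{out}}$ and $K_{\mathrm{in}}$ are origin-symmetric, so are the $k$-dimensional bodies $K_{\mathrm{out}} \cap H$ and $K_{\mathrm{in}} \cap H$, and in particular their barycenters are at the origin. The Blaschke--Santal\'o inequality in $H$ then gives
\[
\vrad(K_{\mathrm{out}} \cap H) \le \frac{1}{\vrad\big((K_{\mathrm{out}} \cap H)^{\circ_H}\big)} = \frac{1}{\vrad\big(P_H((K^{\circ})_{\mathrm{in}})\big)},
\]
while the Bourgain--Milman inequality in $H$ gives, for an absolute constant $c \in (0,1]$,
\[
\vrad(K_{\mathrm{in}} \cap H) \ge \frac{c}{\vrad\big((K_{\mathrm{in}} \cap H)^{\circ_H}\big)} = \frac{c}{\vrad\big(P_H((K^{\circ})_{\mathrm{out}})\big)}.
\]
Dividing the two bounds yields
\[
\frac{\vrad(K_{\mathrm{out}} \cap H)}{\vrad(K_{\mathrm{in}} \cap H)}
\le \frac{1}{c}\cdot\frac{\vrad\big(P_H((K^{\circ})_{\mathrm{out}})\big)}{\vrad\big(P_H((K^{\circ})_{\mathrm{in}})\big)}.
\]

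It remains to bound the ratio on the right, and this is exactly the content of Theorem~\ref{th:projections} applied to the body $K^{\circ}$. The hypothesis ``${\rm bar}(K) = 0$ or ${\rm bar}(K^{\circ}) = 0$'' is symmetric under $K \leftrightarrow K^{\circ}$, since $(K^{\circ})^{\circ} = K$; hence Theorem~\ref{th:projections} applies to $K^{\circ}$ and gives $\vrad(P_H((K^{\circ})_{\mathrm{out}})) \le \tfrac{n g(n)}{k}\,\vrad(P_H((K^{\circ})_{\mathrm{in}}))$ with $g(n) \le C(\ln n)^3$. Combining with the previous display and absorbing the factor $1/c$ into the polylogarithmic function $g$ completes the proof.

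The argument is essentially a formal consequence of Theorem~\ref{th:projections}, so I do not expect a genuine obstacle; the only points needing care are (i) verifying that the hypothesis of Theorem~\ref{th:projections} is preserved under polarity, so that it may legitimately be invoked for $K^{\circ}$, and (ii) the loss of an absolute constant from the use of the Bourgain--Milman inequality in place of a reverse Santal\'o inequality---harmless here, since $g$ is only required to be of order $(\ln n)^3$. If one wished to track the optimal constant one could instead re-run the $M$-estimate argument underlying Theorem~\ref{th:projections} directly on sections, but the duality route above is cleaner.
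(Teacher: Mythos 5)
Your proof is correct and follows essentially the same route as the paper: both dualize sections to projections via the identities $(K_{\mathrm{out}} \cap H)^{\circ_H} = P_H((K^{\circ})_{\mathrm{in}})$ and $(K_{\mathrm{in}} \cap H)^{\circ_H} = P_H((K^{\circ})_{\mathrm{out}})$, then apply the Blaschke--Santal\'o and Bourgain--Milman inequalities in $H$ and conclude by invoking Theorem~\ref{th:projections} for $K^{\circ}$, noting (as you do) that the hypothesis is symmetric under $K \leftrightarrow K^{\circ}$.
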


It is instructive to compare Theorem~\ref{th:sections} with an inequality due to Rudelson \cite{Rudelson-2000a}, who showed that if $K$ is a convex body in $\mathbb{R}^n$, then for all $1 \ls k \ls n-1$ and all $H \in G_{n,k}$,
\[
\vrad((K - K) \cap H)
\ls 
c \, \min\{ n/k, \sqrt{k} \} \,
\max_{x\in \mathbb{R}^n} \vrad(K \cap (x + H)),
\]
where $c > 0$ is an absolute constant. Fradelizi further proved in \cite{Fradelizi-1997} that
\[
\max_{x \in \mathbb{R}^n} \vrad(K \cap (x + H))
\ls 
\frac{n+1}{k+1} \,
\vrad(K \cap ({\rm bar}(K) + H))^{1/k}.
\]
Combining these results yields
\begin{equation}\label{eq:rudelson-fradelizi-intro}
\vrad((K - K) \cap H)
\ls 
c \left( \frac{n}{k} \right)^2
\vrad(K \cap ({\rm bar}(K) + H))
\end{equation}
for all $1 \ls k \ls n-1$ and $H \in G_{n,k}$. Since $K_{\mathrm{in}} \subseteq K$ and $K - K \subseteq 2K_{\mathrm{out}}$, Theorem~\ref{th:sections} immediately implies a version of \eqref{eq:rudelson-fradelizi-intro} with improved dependence on $n/k$.

\medskip

Combining Theorems~\ref{th:projections} and \ref{th:sections} yields the following Blaschke–Santaló type inequality for projections of possibly non-symmetric convex bodies.

\begin{theorem}\label{th:BS-weak}
Let $K$ be a convex body in $\mathbb{R}^n$ such that either ${\rm bar}(K) = 0$ or ${\rm bar}(K^{\circ}) = 0$. Then, for every $1 \ls k \ls n-1$ and $H \in G_{n,k}$,
\[
\vrad(P_H(K)) \, \vrad(K^{\circ} \cap H)
\ls 
\frac{n g(n)}{k},
\]
where $g(n) \ls C (\ln n)^3$.
\end{theorem}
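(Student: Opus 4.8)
The plan is to deduce Theorem~\ref{th:BS-weak} directly from Theorems~\ref{th:projections} and~\ref{th:sections} together with the duality relations \eqref{eq:duality} and the standard Blaschke--Santal\'o inequality for origin-symmetric convex bodies. The starting point is the observation that $K_{\mathrm{out}}$ and $K_{\mathrm{in}}$ are origin-symmetric, so that $(K_{\mathrm{out}})^{\circ} = (K^{\circ})_{\mathrm{in}}$ is origin-symmetric as well, and hence we may apply the symmetric Blaschke--Santal\'o inequality $\vol_n(A)\,\vol_n(A^{\circ}) \ls \vol_n(B_2^n)^2$, i.e.\ $\vrad(A)\,\vrad(A^{\circ}) \ls 1$, to suitable sections and projections. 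The key elementary fact linking projections and sections under polarity is the identity $P_H(A)^{\circ} = A^{\circ} \cap H$ (polar taken inside $H$), valid for any convex body $A$ with $0 \in {\rm int}(A)$ and any subspace $H$.

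First I would reduce to the case ${\rm bar}(K) = 0$: if instead ${\rm bar}(K^{\circ}) = 0$, replace $K$ by $K^{\circ}$ throughout, using that $(K^{\circ})^{\circ} = K$ and the symmetry of the desired inequality in the roles of $P_H(K)$ and $K^{\circ}\cap H$ — concretely, the quantity $\vrad(P_H(K))\,\vrad(K^{\circ}\cap H)$ equals $\vrad(P_H(K^{\circ}))\,\vrad((K^{\circ})^{\circ}\cap H)$ after the substitution, so it suffices to treat ${\rm bar}(K)=0$ (and symmetrically we could assume ${\rm bar}(K^\circ)=0$ if that were more convenient for a given step). With ${\rm bar}(K) = 0$ fixed, the chain of inequalities I would write is: since $K \subseteq K_{\mathrm{out}}$, we have $P_H(K) \subseteq P_H(K_{\mathrm{out}})$, so $\vrad(P_H(K)) \ls \vrad(P_H(K_{\mathrm{out}}))$; then apply Theorem~\ref{th:projections} to get $\vrad(P_H(K_{\mathrm{out}})) \ls \frac{n g(n)}{k}\,\vrad(P_H(K_{\mathrm{in}}))$. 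Next, inside $H$, use $P_H(K_{\mathrm{in}})^{\circ} = (K_{\mathrm{in}})^{\circ} \cap H = (K^{\circ})_{\mathrm{out}} \cap H$ and the symmetric Blaschke--Santal\'o inequality applied to the origin-symmetric body $P_H(K_{\mathrm{in}}) \subset H$ to obtain $\vrad(P_H(K_{\mathrm{in}})) \ls \vrad\!\big((K^{\circ})_{\mathrm{out}} \cap H\big)^{-1}$.

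It then remains to compare $(K^{\circ})_{\mathrm{out}} \cap H$ with $K^{\circ} \cap H$. Here the inclusion goes the wrong way directly ($K^{\circ} \subseteq (K^{\circ})_{\mathrm{out}}$ gives $K^{\circ}\cap H \subseteq (K^\circ)_{\mathrm{out}}\cap H$, hence $\vrad((K^\circ)_{\mathrm{out}}\cap H)^{-1} \ls \vrad(K^\circ\cap H)^{-1}$, which is the correct direction only if we can afford to pass from $(K^{\circ})_{\mathrm{in}}$ to $(K^{\circ})_{\mathrm{out}}$). The clean way is to instead run the section estimate: $(K^{\circ})_{\mathrm{in}} \cap H \subseteq (K^{\circ})_{\mathrm{out}} \cap H$, so $\vrad\!\big((K^{\circ})_{\mathrm{in}}\cap H\big) \ls \vrad\!\big((K^{\circ})_{\mathrm{out}}\cap H\big)$, and by Theorem~\ref{th:sections} applied to $K^{\circ}$ (whose inner/outer regularizations have sections controlled by $g(n)$) combined with $K^{\circ}\cap H \subseteq (K^{\circ})_{\mathrm{out}}\cap H$ we reach $\vrad(K^{\circ}\cap H)$. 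Assembling the chain, the product $\vrad(P_H(K))\,\vrad(K^{\circ}\cap H)$ is bounded by $\frac{ng(n)}{k}$ times a product of volume-radius ratios each of which is $\ls 1$ by Blaschke--Santal\'o, after one reindexing of the logarithmic factor. I expect the main obstacle to be bookkeeping: making sure the two applications of the symmetric Blaschke--Santal\'o inequality and the polarity identity $P_H(A)^{\circ}=A^{\circ}\cap H$ are stitched together so that exactly one factor of $\frac{ng(n)}{k}$ (and not its square) survives — this requires applying Theorem~\ref{th:projections} on one side and Blaschke--Santal\'o (with no loss) on the other, rather than invoking both Theorems~\ref{th:projections} and~\ref{th:sections}, so that the $(\ln n)^3$ bound on $g(n)$ is preserved and not worsened. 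A secondary point to verify carefully is that the hypothesis ${\rm bar}(K)=0$ or ${\rm bar}(K^\circ)=0$ is genuinely the one needed by whichever of Theorems~\ref{th:projections},~\ref{th:sections} we invoke after the reduction, since passing to $K^\circ$ swaps the two barycenter conditions.
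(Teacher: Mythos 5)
Your second paragraph already contains the complete argument, and it coincides with the paper's proof; the confusion in the third paragraph is self-inflicted. After you derive
\[
\vrad(P_H(K)) \;\ls\; \vrad(P_H(K_{\mathrm{out}})) \;\ls\; \frac{n g(n)}{k}\,\vrad(P_H(K_{\mathrm{in}})) \;\ls\; \frac{n g(n)}{k}\,\vrad\!\big((K^{\circ})_{\mathrm{out}}\cap H\big)^{-1},
\]
the inclusion $K^{\circ}\cap H \subseteq (K^{\circ})_{\mathrm{out}}\cap H$ gives $\vrad(K^{\circ}\cap H)\ls\vrad((K^{\circ})_{\mathrm{out}}\cap H)$, so multiplying the displayed chain by $\vrad(K^{\circ}\cap H)$ yields $\vrad(P_H(K))\,\vrad(K^{\circ}\cap H)\ls ng(n)/k$ immediately. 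There is no wrong-way inclusion: you only need the ratio $\vrad(K^{\circ}\cap H)/\vrad((K^{\circ})_{\mathrm{out}}\cap H)\ls 1$, which is exactly what $K^{\circ}\subseteq(K^{\circ})_{\mathrm{out}}$ gives. In fact the paper phrases the same step dually as $\vrad(K^{\circ}\cap H)\ls\vrad((K_{\mathrm{in}})^{\circ}\cap H)\ls\vrad(P_H(K_{\mathrm{in}}))^{-1}$, which by $(K_{\mathrm{in}})^{\circ}=(K^{\circ})_{\mathrm{out}}$ is identical to yours. So you should delete the detour through Theorem~\ref{th:sections}: it is not needed, and as you yourself notice in the last paragraph it would overshoot by an extra factor of $ng(n)/k$.

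Two smaller issues. First, the initial ``reduction'' to $\mathrm{bar}(K)=0$ rests on a false identity: $\vrad(P_H(K))\,\vrad(K^{\circ}\cap H)$ is generally \emph{not} equal to $\vrad(P_H(K^{\circ}))\,\vrad(K\cap H)$, so replacing $K$ by $K^{\circ}$ changes the quantity. The reduction is also unnecessary: Theorem~\ref{th:projections} is stated under the hypothesis ``$\mathrm{bar}(K)=0$ or $\mathrm{bar}(K^{\circ})=0$,'' which is the same hypothesis as in Theorem~\ref{th:BS-weak}, so the single chain of inequalities above already covers both cases uniformly. Second, be sure to record explicitly the polarity fact you are using, namely that for a body $A$ with $0\in\mathrm{int}(A)$ and $H\in G_{n,k}$ one has $(P_H(A))^{\circ}=A^{\circ}\cap H$ (polar taken in $H$); this is what turns the symmetric Blaschke--Santal\'o inequality for $P_H(K_{\mathrm{in}})$ into $\vrad(P_H(K_{\mathrm{in}}))\,\vrad((K^{\circ})_{\mathrm{out}}\cap H)\ls 1$.
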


We include the proof in Section~\ref{section:5} to highlight that the three theorems above are closely related. A stronger version of Theorem~\ref{th:BS-weak} was proved by Vritsiou \cite{Vritsiou-2024}, who established that
\begin{equation}\label{eq:vritsiou}
\vrad(P_H(K)) \, \vrad(K^{\circ} \cap H)
\ls 
\frac{c n}{k},
\end{equation}
for some absolute constant $c > 0$. Moreover, \eqref{eq:vritsiou} is sharp: as shown in \cite{Vritsiou-2024}, if $S_n$ is a regular simplex of edge length $\sqrt{2}$ with barycenter at the origin, then the affine hull of any $k$ of its vertices, together with the average of the remaining $n + 1 - k$ vertices, forms a subspace $H_k \in G_{n,k}$ for which
\[
\vrad(P_{H_k}(S_n^{\circ})) \, \vrad(S_n \cap H_k) \approx \frac{n}{k}.
\]

For the proofs of the above theorems, we may assume without loss of generality that $K$ (or $K^{\circ}$) is isotropic (see 
below and in Section~\ref{section:2} for precise definitions and background). Under this assumption, stronger estimates can 
be obtained for a random subspace $H \in G_{n,k}$.

\begin{theorem}\label{th:random-subspace}
Let $K$ be an isotropic convex body in $\mathbb{R}^n$. Then, for any $1 \ls k \ls n-1$, a random subspace $H \in G_{n,k}$ satisfies
\begin{equation}\label{eq:iso-1}
\vrad(K_{\mathrm{out}} \cap H)
\ls 
\gamma_n \, \vrad(K_{\mathrm{in}} \cap H),
\end{equation}
with probability greater than $1 - e^{-k}$, where $\gamma_n \ls C (\ln n)^2$. Moreover, with the same probability, one also has
\begin{equation}\label{eq:iso-2}
\vrad(K_{\mathrm{out}} \cap H)
\ls 
\delta_n \, \vrad(K_{\mathrm{in}} \cap H),
\end{equation}
where $\delta_n \ls C \ln n$.
\end{theorem}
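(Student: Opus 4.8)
The plan is to exploit isotropic normalization of $K$ together with the Bizeul--Klartag $M$-estimate, following the two-sided strategy that separates the ``hard'' direction (bounding $\vrad(K_{\mathrm{out}} \cap H)$ from above) from the ``easy'' one (bounding $\vrad(K_{\mathrm{in}} \cap H)$ from below). First I would record the elementary chain $K_{\mathrm{in}} \subseteq K \subseteq K_{\mathrm{out}} \subseteq 2\,\conv(K) = 2K$ is too crude; instead I will use that $K_{\mathrm{out}} = \conv(K,-K) \subseteq K - K$ and, by the Rogers--Shephard or reverse-Santal\'o-type bounds in the isotropic setting, that $K-K$ has volume comparable (up to $2^n$) to that of $K$, while $K_{\mathrm{in}} = K \cap (-K)$ has volume at least $2^{-n}\vol_n(K)$ by a classical difference-body argument. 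Passing to sections, the Busemann--Petty-type inequalities and the fact that for isotropic $K$ one has $\vol_n(K) = 1$ will let me express $\vrad(K_{\mathrm{out}} \cap H)$ and $\vrad(K_{\mathrm{in}} \cap H)$ in terms of the behaviour of the support and radial functions of $K$ restricted to $H$.

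The core of the argument is the following: for a random $H \in G_{n,k}$, with probability $\geq 1 - e^{-k}$, the section $K \cap H$ and the projection $P_H(K)$ are both comparable to a Euclidean ball of radius $\sim L_K \sqrt{n}$ up to logarithmic factors --- this is precisely where the Bizeul--Klartag optimal $M$-estimate enters, giving $M(K) \sim (\ln n)^{c}/(L_K \sqrt n)$ and, dually via the low-$M^*$-estimate of Pajor--Tomczak-Jaegermann (or Milman's $M^*$-estimate), control on the in-radius and circumradius of random sections. Concretely, I would show that a random section of $K_{\mathrm{in}}$ contains a ball of radius $c L_K \sqrt{n}/(\ln n)^{a}$ while a random section of $K_{\mathrm{out}}$ is contained in a ball of radius $C L_K \sqrt{n} (\ln n)^{b}$; taking volume radii, the ratio is bounded by $C(\ln n)^{a+b}$, and a careful accounting of the exponents yields the first claim with $\gamma_n \ls C(\ln n)^2$. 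For \eqref{eq:iso-2}, I would instead compare $\vrad(K_{\mathrm{out}} \cap H)$ directly against $\vrad(K_{\mathrm{in}} \cap H)$ without routing through a common Euclidean ball: using the inclusion $K_{\mathrm{out}} \cap H \subseteq (K \cap H) - (K \cap H)$ is false in general, so instead I use $P_H K_{\mathrm{out}} = \conv(P_H K, -P_H K)$ and the identity $(K_{\mathrm{in}} \cap H)^\circ = P_H(K_{\mathrm{out}}^\circ)$ inside $H$, reducing \eqref{eq:iso-2} to a Blaschke--Santal\'o-type statement for the single body $P_H(K)$ that loses only one logarithmic factor via the sharp $M^*$-estimate rather than two.

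The main obstacle I anticipate is controlling the \emph{outer} regularization's section from above with only a single $\ln n$ loss in \eqref{eq:iso-2}: $K_{\mathrm{out}} = \conv(K, -K)$ can be much larger than $K$ near directions where $K$ is ``lopsided,'' so a naive circumradius bound on $K_{\mathrm{out}} \cap H$ costs the same logarithmic factor as the in-radius bound on $K_{\mathrm{in}} \cap H$, giving only $(\ln n)^2$. To get down to $\ln n$ I expect to need the Bizeul--Klartag estimate in both the primal and dual forms simultaneously --- i.e.\ applied to $K$ \emph{and} to $K^\circ$ --- exploiting that $(K_{\mathrm{out}})^\circ = (K^\circ)_{\mathrm{in}}$ from \eqref{eq:duality}, so that the hard direction for $K_{\mathrm{out}}$ becomes the easy direction for $(K^\circ)_{\mathrm{in}}$. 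The remaining routine points --- transferring from a fixed subspace to a random one via concentration on $G_{n,k}$, handling the $k$ versus $n$ normalization of volume radii, and checking that the exceptional probability is $\leq e^{-k}$ rather than merely $e^{-ck}$ --- I would dispatch with standard measure-concentration estimates on the Grassmannian and the union bound built into the low-$M^*$-estimate.
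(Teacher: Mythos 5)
Your broad strategy of separating the ``hard'' direction (upper bound on the outer regularization) from the ``easy'' direction (lower bound on the inner one), and of exploiting the duality $(K_{\mathrm{out}})^\circ = (K^\circ)_{\mathrm{in}}$ to route through Blaschke--Santal\'{o} for the improvement in \eqref{eq:iso-2}, is in the spirit of the paper. However, the concrete mechanism you propose for the lower bound has a genuine gap, and as a result the method as described cannot give the stated constant $\gamma_n \lesssim (\ln n)^2$.

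The flaw is in lower-bounding $\vrad(K_{\mathrm{in}} \cap H)$ (equivalently $\vrad(P_H(K_{\mathrm{in}}))$) by an \emph{in-radius} estimate coming from a low-$M^*$-type theorem. For an isotropic $K$, the body $K_{\mathrm{in}}$ contains only a Euclidean ball of radius $\approx L_K = O(1)$, so the in-radius of a random section $K_{\mathrm{in}} \cap H$ is $O(1)$ as well (already for the cube $K = [-1/2,1/2]^n$ this is sharp), whereas $\vrad(K_{\mathrm{in}} \cap H)$ must be of order $\sqrt{n}$ for the theorem to hold. This is a $\sqrt{n}$-size gap, not a polylogarithmic one, so ``careful accounting of exponents'' cannot rescue the $(\ln n)^{a+b}$ bookkeeping you set up: any in-radius route forces $a \gtrsim \log_{\ln n}(\sqrt n)$ which blows up. The paper instead uses the Paouris--Pivovarov small-ball inequality for the \emph{volume} of random projections (the quantity $\Phi_k(C) \geq c\sqrt{n/k}$), applied to $K_{\mathrm{in}}$ together with V.~Milman--Pajor $\vol_n(K_{\mathrm{in}})^{1/n} \geq \tfrac12$. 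This depends only on volume and gives the sharp $\vrad(P_H(K_{\mathrm{in}})) \geq c\sqrt{n}$ with probability $\geq 1 - e^{-kn}$, losing \emph{no} logarithmic factors; the entire polylog cost is then absorbed by the upper bound, which the paper obtains not from a circumradius low-$M^*$-estimate (which carries a $\sqrt{n/(n-k)}$ factor degrading for $k$ close to $n$) but from the Aleksandrov monotonicity of the quermassintegral averages, i.e.\ $\big(\omega_k^{-1}\int_{G_{n,k}} \vol_k(P_H C)\,d\nu_{n,k}\big)^{1/k} \leq M^*(C)$, combined with Markov. You should replace your radius-based lower bound with this volume small-ball estimate; without it the proof does not close.

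Your instinct for \eqref{eq:iso-2}---``apply Bizeul--Klartag in both primal and dual forms, using that the hard side for $K_{\mathrm{out}}$ becomes the easy side for $(K^\circ)_{\mathrm{in}}$''---is essentially right, but be precise about where the single log comes from: it is not that Bizeul--Klartag is applied \emph{to} $K^\circ$ (which is not isotropic), but that $M^*(K^\circ) = M(K) \leq c\ln n/\sqrt n$ by the Bizeul--Klartag $M$-estimate on the isotropic $K$. Running the Paouris--Pivovarov / Aleksandrov argument with $(K^\circ)_{\mathrm{in}}, (K^\circ)_{\mathrm{out}}$ in place of $K_{\mathrm{in}}, K_{\mathrm{out}}$, one gets $\vrad(P_H((K^\circ)_{\mathrm{out}})) \lesssim \delta_n \vrad(P_H((K^\circ)_{\mathrm{in}}))$ with $\delta_n \lesssim \ln n$, and a Blaschke--Santal\'{o} / Bourgain--Milman dualization inside $H$ (using that $K_{\mathrm{out}} \cap H$ and $P_H((K^\circ)_{\mathrm{in}})$ are polars in $H$, and similarly $K_{\mathrm{in}} \cap H$ and $P_H((K^\circ)_{\mathrm{out}})$) then yields the section inequality \eqref{eq:iso-2}. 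Your sketch of this last step is correct in outline but should spell out these two polarity identities explicitly.
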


\medskip

We also establish functional analogues of Theorems~\ref{th:sections} and \ref{th:projections} for the class $\mathcal{L}^n$ of geometric log-concave integrable functions. These are the centered log-concave functions $f : \mathbb{R}^n \to [0, \infty)$ satisfying $f(0) = \|f\|_{\infty}$ and $0 < \int f < \infty$. For any $f \in \mathcal{L}^n$, define
\[
\Delta_{\mathrm{out}} f(x)
=
\sup \Big\{
\sqrt{f(x_1) f(-x_2)} : x = \tfrac{1}{2}(x_1 + x_2)
\Big\}
\quad\text{and}\quad
\Delta_{\mathrm{in}} f(x)
=
\min\{ f(x), f(-x) \}.
\]
If $f = \mathds{1}_K$ for some convex body $K$, then $\Delta_{\mathrm{out}} f = \mathds{1}_{\frac{1}{2}(K - K)}$ and $\Delta_{\mathrm{in}} f = \mathds{1}_{K \cap (-K)}$. Moreover, for every $x \in \mathbb{R}^n$, one has $\Delta_{\mathrm{in}} f(x) \ls f(x)$ and $\Delta_{\mathrm{in}} f(x) \ls \Delta_{\mathrm{out}} f(x)$. The following theorem extends Theorem~\ref{th:sections} to this functional setting.

\begin{theorem}\label{th:sections-functional}
For any $f \in \mathcal{L}^n$, any $1 \ls k \ls n-1$, and any $H \in G_{n,k}$,
\begin{equation}\label{eq:sections-functional-upper}
\left( \int_H \Delta_{\mathrm{out}} f(x) \, dx \right)^{1/k}
\ls 
C (n/k)^2 g(n)
\left( \int_H \Delta_{\mathrm{in}} f(x) \, dx \right)^{1/k},
\end{equation}
and
\begin{equation}\label{eq:sections-functional-lower}
c \left( \int_H f(x) \, dx \right)^{1/k}
\ls 
\left( \int_H \Delta_{\mathrm{out}} f(x) \, dx \right)^{1/k},
\end{equation}
where $c > 0$ and $C > 1$ are absolute constants.
\end{theorem}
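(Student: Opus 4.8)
The two inequalities in Theorem~\ref{th:sections-functional} are of quite different flavor, so I would treat them separately. For the lower bound \eqref{eq:sections-functional-lower}, the idea is to pass through the level sets. Since $\Delta_{\mathrm{out}} f(\tfrac12(x_1+x_2)) \ge \sqrt{f(x_1)f(-x_2)}$, taking $x_2 = x_1 = x$ already gives $\Delta_{\mathrm{out}} f(x) \ge \sqrt{f(x)f(-x)} = \Delta_{\mathrm{in}} f(x)^{1/2} \cdots$ — but that is the wrong direction. Instead, I would use that $\Delta_{\mathrm{out}} f$ is, by construction, the largest even log-concave function dominated in the appropriate "geometric mean" sense by $f$; concretely, for $f \in \mathcal{L}^n$ the restriction $f|_H$ lies in $\mathcal{L}^k$ (centeredness and $f(0)=\|f\|_\infty$ are inherited), and $\Delta_{\mathrm{out}}(f|_H) = (\Delta_{\mathrm{out}} f)|_H$ because the defining supremum can be taken within $H$ by projecting $x_1,x_2$ — here convexity of $H$ and evenness are what make the midpoint land in $H$. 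So \eqref{eq:sections-functional-lower} reduces to the $k$-dimensional statement $c \int g \le \int \Delta_{\mathrm{out}} g$ for $g \in \mathcal{L}^k$, which is the functional form of the elementary fact $\vol(K) \le 2^k \vol\big(\tfrac12(K-K)\big)$ (Rogers–Shephard gives the reverse, and the easy direction $K \subseteq \operatorname{bar}(K) + (K - K)$ combined with centeredness gives what we need); the log-concave version is a standard Ball-type computation comparing $\int g$ with $\int \Delta_{\mathrm{out}} g \ge \int g(\cdot)^{1/2} g(-\cdot)^{1/2}$ after a Prékopa–Leindler / Rogers–Shephard-for-functions step. I expect the constant $c$ to come out as an absolute constant with no dimensional loss, since the inequality $\operatorname{bar}(g|_H)$-centeredness is preserved.

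For the upper bound \eqref{eq:sections-functional-upper}, the natural route is to discretize $f$ into its super-level sets $K_t = \{f \ge t\}$ for $t \in (0, \|f\|_\infty]$. Each $K_t$ is a convex body containing $0$ in its interior (by $f(0) = \|f\|_\infty$), though it need not be centered; however, one checks directly that $\{\Delta_{\mathrm{out}} f \ge t\} = \tfrac12(K_t - K_t)$ and $\{\Delta_{\mathrm{in}} f \ge t\} = K_t \cap (-K_t) = (K_t)_{\mathrm{in}}$, and that $\tfrac12(K_t - K_t) \subseteq (K_t)_{\mathrm{out}}$. By the layer-cake / Fubini identity, $\int_H \Delta_{\mathrm{out}} f = \int_0^{\|f\|_\infty} \vol_k\big((\tfrac12(K_t - K_t)) \cap H\big)\,dt$ and similarly for $\Delta_{\mathrm{in}} f$. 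So it suffices to bound $\vol_k\big((K_t)_{\mathrm{out}} \cap H\big)$ by $(C(n/k)^2 g(n))^k \,\vol_k\big((K_t)_{\mathrm{in}} \cap H\big)$ \emph{uniformly in $t$}, and then integrate. Here is the one subtlety: Theorem~\ref{th:sections} requires $\operatorname{bar}(K_t) = 0$ or $\operatorname{bar}(K_t^\circ) = 0$, which is not given. To fix this I would invoke the Rudelson–Fradelizi chain \eqref{eq:rudelson-fradelizi-intro} directly at the level of $K_t$ (which is stated for an arbitrary convex body, with the section taken through $\operatorname{bar}(K_t)$, not the origin): $\vrad\big((K_t - K_t) \cap H\big) \le c(n/k)^2 \,\vrad\big(K_t \cap (\operatorname{bar}(K_t) + H)\big)$, and then bound the right-hand side by $\vrad\big(K_t \cap (x_0 + H)\big)$ for the optimal shift $x_0$, which in turn is comparable to $\vrad\big((K_t)_{\mathrm{in}} \cap H\big)$ up to the factor $g(n)$ coming from the isotropic $M$-estimate as in the proof of Theorem~\ref{th:sections}. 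This is exactly where the $(n/k)^2 g(n)$ in \eqref{eq:sections-functional-upper} originates — the extra power of $n/k$ relative to Theorem~\ref{th:sections} is the price of not having a centered body at each level.

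\textbf{The main obstacle.} The delicate point is the interchange of "taking level sets" with "comparing inner and outer regularizations" while keeping the estimate uniform in $t$: the bodies $K_t$ vary with $t$, they are not centered, and their John/isotropic positions change, so one cannot simply quote Theorem~\ref{th:sections}. The cleanest fix is to prove a single pointwise lemma — for \emph{every} convex body $L$ with $0 \in \operatorname{int}(L)$, $\vrad(L_{\mathrm{out}} \cap H) \le C(n/k)^2 g(n)\, \vrad(L_{\mathrm{in}} \cap H)$ — by combining \eqref{eq:rudelson-fradelizi-intro} with the observation that $\max_x \vrad(L \cap (x+H)) \le c\,\vrad(L_{\mathrm{in}} \cap H)\cdot g(n)^{1/?}$, the latter being the genuinely new input where the Bizeul–Klartag $M$-estimate enters (applied to the isotropic position of $L_{\mathrm{in}}$ or its polar). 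Once that lemma is in hand, the functional statement follows by integrating over $t$ and applying Minkowski's integral inequality to pass from an $L^1$-bound on volumes to the claimed $L^{1/k}$-norm comparison — a routine but necessary step to land on the exponents $1/k$ written in \eqref{eq:sections-functional-upper} and \eqref{eq:sections-functional-lower}. I would double-check that the $\vol_k(\cdot)^{1/k}$ versus $\vol_k(\cdot)$ bookkeeping does not secretly cost another factor of $k$; it should not, because the uniform-in-$t$ bound is already at the level of $k$-th roots.
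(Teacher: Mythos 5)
Your proposal diverges from the paper's route (which works with K.~Ball's bodies $K_p(f)$ rather than level sets), and unfortunately the divergence introduces two genuine gaps in the upper bound.

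\textbf{The level-set identity for $\Delta_{\mathrm{out}} f$ is false.} You claim $\{\Delta_{\mathrm{out}} f \ge t\} = \tfrac12(K_t - K_t)$ where $K_t = \{f \ge t\}$. Only the inclusion $\supseteq$ holds. Indeed $\Delta_{\mathrm{out}} f(x) \ge t$ means that for some $x_1, x_2$ with $x = \tfrac12(x_1+x_2)$ one has $f(x_1) f(-x_2) \ge t^2$; this allows, say, $f(x_1) = 1$ and $f(-x_2) = t^2$, which does not put $-x_2$ in $K_t$. Concretely, on $\R$ take $f = e^{-\varphi}$ with $\varphi(x) = x$ for $x \ge 0$ and $\varphi(x) = -2x$ for $x < 0$. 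Then $\Delta_{\mathrm{out}} f(x) = e^{-|x|}$ (one checks the optimizer is $x_1 = 2x$, $x_2 = 0$ for $x \ge 0$), so $\{\Delta_{\mathrm{out}} f \ge t\} = [-\ln(1/t), \ln(1/t)]$, while $\tfrac12(K_t - K_t) = [-\tfrac34 \ln(1/t), \tfrac34 \ln(1/t)]$. Since the layer-cake formula gives $\int_H \Delta_{\mathrm{out}} f = \int_0^1 \vol_k(\{\Delta_{\mathrm{out}} f \ge t\} \cap H)\, dt$, the inclusion that actually holds points in the wrong direction for an upper bound: it shows $\int_H \Delta_{\mathrm{out}} f \ge \int_0^1 \vol_k(\tfrac12(K_t - K_t) \cap H)\, dt$, not $\le$. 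To control the excess one would need to pass from a level $s$ of $\Delta_{\mathrm{out}} f$ to a lower level of $f$ (since $f \le 1$ forces $x_1, -x_2 \in K_{s^2}$), and then a doubling-type comparison between $K_s$ and $K_{s^2}$, which is precisely the content of the $K_p(f)$-machinery you are bypassing.

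\textbf{The proposed ``pointwise lemma'' cannot hold.} You suggest repairing the non-centeredness of $K_t$ by establishing, for \emph{every} convex body $L$ with $0 \in \operatorname{int}(L)$, the bound $\vrad(L_{\mathrm{out}} \cap H) \le C(n/k)^2 g(n) \vrad(L_{\mathrm{in}} \cap H)$. This is false without a barycenter condition: take $L = B_2^n + v$ with $|v| = 1 - \eps$. Then $L_{\mathrm{out}}$ contains a ball of radius $\approx 1$, while $L_{\mathrm{in}} = (B_2^n + v) \cap (B_2^n - v)$ has width $2\eps$ in the $v$-direction, so the ratio of volume radii (and of sectional volume radii for $H$ containing $v$) blows up as $\eps \to 0$. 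The hypothesis ${\rm bar}(L) = 0$ or ${\rm bar}(L^\circ) = 0$ in Theorem~\ref{th:sections} is essential, and the level sets $K_t$ do not satisfy it. This is exactly why the paper uses $K_{n+1}(f)$: for centered $f$, the Ball body $K_{n+1}(f)$ is centered (its barycenter is at the origin), so Theorem~\ref{th:sections} applies directly; one then compares $K_k(\cdot)$ with $K_{n+1}(\cdot)$ via \eqref{eq:inclusions-Kp} (incurring one factor $n/k$) and uses Lemma~\ref{lem:6.3.3} to identify $K_p(\Delta_{\mathrm{out}} f) \approx K_p(f)_{\mathrm{out}}$ and $K_p(\Delta_{\mathrm{in}} f) \approx K_p(f)_{\mathrm{in}}$.

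\textbf{On the lower bound.} Your reduction to a $k$-dimensional statement is salvageable, but the claimed identity $\Delta_{\mathrm{out}}(f|_H) = (\Delta_{\mathrm{out}} f)|_H$ overreaches: passing from a sup over $x_1, x_2 \in \R^n$ to $x_1, x_2 \in H$ can only decrease the value, so one gets $\Delta_{\mathrm{out}}(f|_H) \le (\Delta_{\mathrm{out}} f)|_H$, which fortunately is the direction you need. With this correction, the chain $\int_H f = \int_H f|_H \le c^k \int_H \Delta_{\mathrm{out}}(f|_H) \le c^k \int_H \Delta_{\mathrm{out}} f$ does the job, provided you have the functional Rogers--Shephard inequality in dimension $k$. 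The paper instead obtains the lower bound by the inclusion $K_k(f|_H) \subseteq c_1 (K_k(f) \cap H)$ (proved via the $R_p$-sets) followed by Lemma~\ref{lem:6.3.3}; both routes are legitimate here, but the upper bound requires the $K_p$-apparatus in an essential way.
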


Given a nonnegative measurable function $g : \mathbb{R}^n \to [0, \infty)$ and $H \in G_{n,k}$, the orthogonal projection 
of $g$ onto $H$ is defined by
\[
(P_H g)(z) = \sup\{ g(y + z) : y \in H^{\perp} \}.
\]
With this definition, we obtain the following functional analogue of Theorem~\ref{th:projections}.

\begin{theorem}\label{th:projections-functional}
For any $f \in \mathcal{L}^n$, any $1 \ls k \ls n-1$, and any $H \in G_{n,k}$,
\begin{equation}\label{eq:projections-functional}
\left( \int_H P_H(\Delta_{\mathrm{out}} f)(x) \, dx \right)^{1/k}
\ls 
C (n/k)^2 g(n)
\left( \int_H P_H(\Delta_{\mathrm{in}} f)(x) \, dx \right)^{1/k},
\end{equation}
where $C > 0$ is an absolute constant.
\end{theorem}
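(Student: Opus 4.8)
The plan is to deduce Theorem~\ref{th:projections-functional} from Theorem~\ref{th:sections-functional} by exactly the same mechanism that lets one pass from sections to projections of convex bodies, combined with the Prékopa--Leindler-type behaviour of the functional projection operator $P_H$. The key observation is that $P_H$ interacts well with the two regularization operators: I would first verify the commutation/comparison relations
\[
P_H(\Delta_{\mathrm{out}} f) = \Delta_{\mathrm{out}}(P_{\mathbb{R}^n}f)\big|_H \quad\text{and}\quad P_H(\Delta_{\mathrm{in}} f) \ge \Delta_{\mathrm{in}}(P_H f),
\]
or, more usefully, the pointwise identity $P_H(\Delta_{\mathrm{out}} f) = \Delta_{\mathrm{out}}^{(H)}(P_H f)$ where $\Delta_{\mathrm{out}}^{(H)}$ is the outer regularization taken \emph{within} the $k$-dimensional space $H$. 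This holds because the supremum defining $\Delta_{\mathrm{out}}$ (over splittings $x = \tfrac12(x_1+x_2)$) and the supremum defining $P_H$ (over $H^\perp$-fibers) can be interchanged: writing $x_i = y_i + z_i$ with $y_i \in H^\perp$, the geometric mean $\sqrt{f(x_1)f(-x_2)}$ is maximized by independently optimizing over the $H^\perp$-components, which yields $\sqrt{(P_H f)(z_1)(P_H f)(-z_2)}$.

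Granting this, $P_H f$ is itself a geometric log-concave function on $H \cong \mathbb{R}^k$ — here I would cite the standard fact that the projection (in the sup sense) of a log-concave function is log-concave (a consequence of Prékopa--Leindler), that it attains its maximum at $0$ with the same sup-norm as $f$, and is centered if $f$ is, so $P_H f \in \mathcal{L}^k$. Then $\int_H P_H(\Delta_{\mathrm{out}} f) = \int_H \Delta_{\mathrm{out}}^{(H)}(P_H f)$ and $\int_H P_H(\Delta_{\mathrm{in}} f) \ge \int_H \Delta_{\mathrm{in}}^{(H)}(P_H f)$ (the latter because $P_H(\min\{f(x),f(-x)\}) \ge \min\{(P_H f)(z),(P_H f)(-z)\}$, since projecting a minimum dominates the minimum of projections). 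Applying Theorem~\ref{th:sections-functional} to the function $g := P_H f \in \mathcal{L}^k$ with ambient dimension $k$ and the \emph{full} subspace $H$ itself (i.e. the case "$k = n$" of that theorem, or more carefully a limiting/degenerate case), or more robustly applying the inequality \eqref{eq:sections-functional-upper} in dimension $k$ with the whole space as the section, gives
\[
\Big(\int_H \Delta_{\mathrm{out}}^{(H)} g\Big)^{1/k} \ls C\,g(k)\Big(\int_H \Delta_{\mathrm{in}}^{(H)} g\Big)^{1/k},
\]
and chaining the three relations produces \eqref{eq:projections-functional} with the stated $C(n/k)^2 g(n)$ constant (indeed with the better constant $Cg(n)$, but the weaker form suffices).

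The main obstacle I anticipate is the careful justification of the two interchange/domination claims, and in particular handling the boundary case where Theorem~\ref{th:sections-functional} is to be applied ``with $H$ equal to the whole space.'' Strictly, Theorem~\ref{th:sections-functional} is stated for $1 \ls k \ls n-1$, so I would instead reprove or restate the relevant inequality so that it covers the diagonal case $k = n$ (i.e. $\int_{\mathbb{R}^k}\Delta_{\mathrm{out}} g \ls (Cg(k))^k \int_{\mathbb{R}^k}\Delta_{\mathrm{in}} g$ for $g \in \mathcal{L}^k$) — this is really a volume comparison between $K_{\mathrm{out}}$-type and $K_{\mathrm{in}}$-type level sets and follows from the same isotropic $M$-estimate input, applied to (the convex body associated to) $g$ itself rather than to a proper section. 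A secondary technical point is measurability and the attainment of suprema in the definition of $P_H g$ when $f$ is merely integrable log-concave; standard regularity of log-concave functions (continuity on the interior of the support, decay at infinity) makes these suprema genuine maxima on the relevant set, so no essential difficulty arises there. Once the diagonal case and the two interchange lemmas are in place, the proof is a short three-line composition.
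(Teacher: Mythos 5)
Your proposed reduction has a genuine gap, and it lies exactly where you flagged a concern: the comparison between $P_H(\Delta_{\mathrm{in}} f)$ and the inner regularization of $P_H f$. The commutation identity for $\Delta_{\mathrm{out}}$ is correct (and for the same reason you gave: the constraint in the definition of $\Delta_{\mathrm{out}}$ decouples along $H$ and $H^\perp$, so the three suprema interchange and one gets $P_H(\Delta_{\mathrm{out}} f) = \Delta_{\mathrm{out}}^{(H)}(P_H f)$). But the claim ``projecting a minimum dominates the minimum of projections'' is \emph{backwards}. Writing $a(y) = f(y+z)$ and $b(y) = f(-y-z)$ for $y \in H^\perp$, one has
\[
P_H(\Delta_{\mathrm{in}} f)(z) \;=\; \sup_{y}\,\min\{a(y),b(y)\}
\;\ls\; \min\Big\{\sup_y a(y),\ \sup_y b(y)\Big\}
\;=\; \Delta_{\mathrm{in}}^{(H)}(P_H f)(z),
\]
since the supremum of a pointwise minimum is at most the minimum of the suprema; the two maximizing values of $y$ may simply be different. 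Geometrically, for $f=\mathds{1}_K$ this is the familiar fact $P_H(K\cap(-K)) \subseteq P_H(K)\cap(-P_H(K))$, where the inclusion can be very strict (think of a thin slab tilted relative to $H$: the right-hand set is large while the left-hand set is nearly degenerate). Your chain therefore becomes
\[
\int_H P_H(\Delta_{\mathrm{out}} f) = \int_H \Delta_{\mathrm{out}}^{(H)}(P_H f)
\ls C\int_H \Delta_{\mathrm{in}}^{(H)}(P_H f)
\quad\text{and}\quad
\int_H \Delta_{\mathrm{in}}^{(H)}(P_H f) \gr \int_H P_H(\Delta_{\mathrm{in}} f),
\]
which does not close; there is no universal upper bound on the ratio $\int_H \Delta_{\mathrm{in}}^{(H)}(P_H f) \big/ \int_H P_H(\Delta_{\mathrm{in}} f)$, so no constant can be extracted from this route without further input.

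For comparison, the paper avoids this entirely. It does not invoke the functional section inequality at all; instead it uses the layer-cake formula $\int_H P_H g = \int_1^\infty e^{-(p-1)}\vol_k\big(P_H(R_p(g))\big)\,dp$, the exact identity $R_p(P_H g) = P_H(R_p(g))$ (which, unlike the $\Delta_{\mathrm{in}}$ commutation, is an equality), the equivalences $R_p(f) \approx K_p(f)$ and $K_p(\Delta_{\mathrm{out}} f) \approx K_p(f)_{\mathrm{out}}$, $K_p(\Delta_{\mathrm{in}} f) \approx K_p(f)_{\mathrm{in}}$ (Lemmas~\ref{lem:6.3.2}--\ref{lem:6.3.3}), and then applies Theorem~\ref{th:projections} directly to the single centered convex body $K_{n+1}(f)$ in $\mathbb{R}^n$ with the given $k$-dimensional subspace $H$. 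The extra factor of $n/k$ beyond the body estimate comes from comparing $K_p$ for different $p$'s when collapsing the layer-cake integral, not from a dimensional reduction to $H$. So the paper stays at ambient dimension $n$ throughout, whereas your plan tries to reduce to dimension $k$ and founders on the inner-regularization side of that reduction. (The other issue you anticipated, applying Theorem~\ref{th:sections-functional} in its diagonal case $k=n$, is indeed a missing statement but would be easy to supply; the inequality direction for $\Delta_{\mathrm{in}}$ is the substantive obstruction.)
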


To prove Theorems~\ref{th:sections-functional} and \ref{th:projections-functional}, we employ the family of K.~Ball's bodies $K_p(f)$ associated with a log-concave function $f$, allowing us to transfer the problem to the setting of convex bodies and apply Theorems~\ref{th:sections} and \ref{th:projections} accordingly.

\medskip

The foundation of our results lies in recent advances concerning isotropic convex bodies and, in particular, in the sharp $MM^{\ast}$-estimate. We denote by $p_K(x) = \inf\{ t > 0 : x \in tK \}$ the Minkowski functional of $K$, and by $h_K(y) = \max\{ \langle x, y \rangle : x \in K \}$ its support function. The parameters
\[
M(K) = \int_{S^{n-1}} p_K(x) \, d\sigma(x)
\quad\text{and}\quad
M^{\ast}(K) = \int_{S^{n-1}} h_K(x) \, d\sigma(x),
\]
where $\sigma$ denotes the rotationally invariant probability measure on the unit sphere $S^{n-1}$, play a central role in the asymptotic theory of finite-dimensional normed spaces. It is well known that $M(K) M^{\ast}(K) \gr 1$ for every convex body $K$ with $0 \in {\rm int}(K)$.

A classical result of Figiel–Tomczak-Jaegermann \cite{Figiel-Tomczak-1979}, Lewis \cite{Lewis-1979}, and Pisier \cite{Pisier-1982} asserts that for any symmetric convex body $K \subset \mathbb{R}^n$ there exists $T \in GL_n$ such that
\[
M(TK) M^{\ast}(TK) \ls c \ln n,
\]
where $c > 0$ is an absolute constant. Without assuming symmetry, it is natural to consider
\[
E(K) = \inf M(TK) M^{\ast}(TK),
\]
where the infimum runs over all invertible affine transformations $T$ of $\mathbb{R}^n$ satisfying $0 \in {\rm int}(TK)$. It turns out that the isotropic position yields a sharp upper bound for $E(K)$.

A convex body $K \subset \mathbb{R}^n$ is called \emph{isotropic} if it has volume $1$, its barycenter is at the origin, and there exists a constant $L_K > 0$ such that
\[
\int_K \langle x, \xi \rangle^2 \, dx = L_K^2
\quad\text{for all } \xi \in S^{n-1}.
\]
Every convex body $K$ admits an isotropic affine image, unique up to orthogonal transformations (see \cite{Milman-Pajor-LK}). Using this canonical position, one defines the isotropic constant $L_K$, an affine invariant of $K$.

A central question in asymptotic convex geometry, posed by Bourgain \cite{Bourgain-1986}, asked whether there exists an absolute constant $C > 0$ such that
\[
L_n := \max\{ L_K : K \text{ isotropic convex body in } \mathbb{R}^n \} \ls C
\]
for all $n \gr 1$. This was recently resolved affirmatively by Klartag and Lehec \cite{KL}, following major progress by Guan \cite{Guan}, and soon afterward an alternative proof was given by Bizeul \cite{Bizeul-2025}.

E.~Milman proved in \cite{EMilman-2014} that if $K$ is isotropic in ${\mathbb R}^n$, then
\begin{equation}\label{eq:EM}
M^{\ast}(K) \ls C \sqrt{n} (\ln n)^2 L_K \ls c_1 \sqrt{n} (\ln n)^2,
\end{equation}
where the second inequality uses the boundedness of $L_n$. The dependence on $n$ is optimal up to the logarithmic term.  

The dual estimate for $M(K)$ in the isotropic position was obtained recently by Bizeul and Klartag \cite{Bizeul-Klartag-2025}, who showed that
\begin{equation}\label{eq:BK}
M(K) \ls c_2 \frac{\log n}{\sqrt{n}}.
\end{equation}

Combining \eqref{eq:EM} and \eqref{eq:BK} gives
\begin{equation}\label{eq:solution}
E(K) \ls c_3 (\ln n)^3
\end{equation}
for every convex body $K \subset \mathbb{R}^n$.  
Since $M(K) M^{\ast}(K) \gr 1$ holds universally, \eqref{eq:solution} provides a sharp upper bound for $E(K)$ up to a factor of $(\ln n)^3$.

In Section~\ref{section:2} we review these results in more detail, and in Section~\ref{section:3} we collect classical geometric inequalities relevant to our setting.

Our approach exploits these refined $M$- and $M^{\ast}$-estimates to derive strong regularity estimates for covering numbers of isotropic convex bodies. These, in turn, allow a careful comparison of sections and projections of the inner and outer symmetrizations $K_{\mathrm{in}}$ and $K_{\mathrm{out}}$, leading directly to Theorems~\ref{th:projections} and \ref{th:sections}. In particular, the isotropic position serves not only as a convenient normalization but also as a tool for transferring results from the symmetric to the general, non-symmetric setting. This strategy underpins the functional extensions presented in Theorems~\ref{th:sections-functional} and \ref{th:projections-functional}, showing that the geometric inequalities naturally extend to the broader class of log-concave functions.

%%%%%%%%%%%%%%%%%%%%%%%%%%%%%%%%%%%%%%%%%%%%%%%%%%%%%%%%%%%%%%%%%%%%%%%%%%%%%%%%%%%%%%%%%%%%%%%%%%%%%%%%%%%%%%%%%%%%%%%%%%%%%%%%%%%%%%%%%%%%%%%%%%%%%%%%%%%
\section{Notation and background}\label{section:2}
%%%%%%%%%%%%%%%%%%%%%%%%%%%%%%%%%%%%%%%%%%%%%%%%%%%%%%%%%%%%%%%%%%%%%%%%%%%%%%%%%%%%%%%%%%%%%%%%%%%%%%%%%%%%%%%%%%%%%%%%%%%%%%%%%%%%%%%%%%%%%%%%%%%%%%%%%%%

We work in $\mathbb{R}^n$, equipped with the standard inner product $\langle \cdot, \cdot \rangle$.  
The corresponding Euclidean norm is denoted by $|\cdot|$, the Euclidean unit ball by $B_2^n$, and the unit sphere by $S^{n-1}$.  
Volume in $\mathbb{R}^n$ is denoted by $\vol_n$, while $\omega_n = \vol_n(B_2^n)$ stands for the volume of the Euclidean unit ball.  

We denote by $\sigma$ the rotationally invariant probability measure on $S^{n-1}$ and by $\nu$ the Haar measure on $O(n)$.  
The Grassmann manifold $G_{n,k}$ of $k$-dimensional subspaces of $\mathbb{R}^n$ is equipped with the Haar probability measure $\nu_{n,k}$.  
For each integer $1 \ls k \ls n-1$ and every $H \in G_{n,k}$, we denote by $P_H$ the orthogonal projection from $\mathbb{R}^n$ onto $H$, and set
\[
B_H := B_2^n \cap H, \qquad S_H := S^{n-1} \cap H.
\]

\medskip
\noindent
The letters $c, c', c_1, c_2, \ldots$ denote absolute positive constants whose value may change from line to line.  
Whenever we write $a \approx b$, we mean that there exist absolute constants $c_1, c_2 > 0$ such that $c_1 a \ls b \ls c_2 a$.  
Similarly, for convex bodies $K, C \subseteq \mathbb{R}^n$, we write $K \approx C$ if there exist absolute constants $c_1, c_2 > 0$ such that
$c_1 K \subseteq C \subseteq c_2 K$.

\medskip
A \emph{convex body} in $\mathbb{R}^n$ is a compact convex subset $K$ with nonempty interior.  
We say that $K$ is \emph{symmetric} if $K = -K$, and \emph{centered} if its barycenter ${\rm bar}(K)$ is at the origin.

The \emph{radial function} of a convex body $K$ with $0 \in {\rm int}(K)$ is
\[
\rho_K(x) = \max\{ t > 0 : t x \in K \}, \qquad x \in \mathbb{R}^n \setminus \{0\},
\]
and the \emph{support function} of $K$ is defined for $y \in \mathbb{R}^n$ by
\[
h_K(y) = \max \{ \langle x, y \rangle : x \in K \}.
\]
The \emph{radius} of $K$ is $R(K) = \max\{ |x| : x \in K \}$, and the \emph{volume radius} is
\[
{\rm vrad}(K) = \left( \frac{\vol_n(K)}{\vol_n(B_2^n)} \right)^{1/n}.
\]
The \emph{polar body} $K^{\circ}$ of a convex body $K$ with $0 \in {\rm int}(K)$ is given by
\[
K^{\circ} = \{ x \in \mathbb{R}^n : \langle x, y \rangle \ls 1 \text{ for all } y \in K \}.
\]

\medskip
An absolutely continuous Borel probability measure $\mu$ on $\mathbb{R}^n$ is called \emph{log-concave} if its density $f_\mu$ is of the form
$f_\mu = e^{-\varphi}$ with $\varphi : \mathbb{R}^n \to \mathbb{R} \cup \{+\infty\}$ convex.  
The uniform probability measure on any convex body is log-concave.

The barycenter of $\mu$ is
\[
{\rm bar}(\mu) := \int_{\mathbb{R}^n} x f_\mu(x) \, dx,
\]
and its \emph{isotropic constant} is the affine-invariant quantity
\begin{equation}\label{definition-isotropic}
L_\mu := \left( \frac{\|f_\mu\|_{\infty}}{\int_{\mathbb{R}^n} f_\mu(x) \, dx} \right)^{\frac{1}{n}}
\det({\rm Cov}(\mu))^{\frac{1}{2n}},
\end{equation}
where
\[
{\rm Cov}(\mu) := \int x \otimes x \, d\mu(x)
- \left( \int x \, d\mu(x) \right) \otimes \left( \int x \, d\mu(x) \right)
\]
is the covariance matrix of $\mu$.  
A log-concave probability measure $\mu$ on $\mathbb{R}^n$ is called \emph{isotropic} if ${\rm bar}(\mu) = 0$ and ${\rm Cov}(\mu) = I_n$.  
A convex body $K$ of volume $1$ is isotropic if and only if the log-concave probability measure with density $L_K^n \mathds{1}_{K/L_K}$ is isotropic.

\medskip
K.~Ball~\cite{Ball-1988} showed that, in every dimension $n$,
\[
\sup_\mu L_\mu \ls C \sup_K L_K,
\]
where the suprema are taken over all log-concave probability measures $\mu$ and all convex bodies $K \subseteq \mathbb{R}^n$, respectively.
Around 1985--86 (published in 1990), Bourgain~\cite{Bourgain-1991} obtained the bound $L_n \ls c n^{1/4} \ln n$, later improved by Klartag~\cite{Klartag-2006} to $L_n \ls c n^{1/4}$.  
These estimates remained the best known until 2020.  
In a breakthrough, Chen~\cite{C} proved that for every $\varepsilon > 0$, one has $L_n \ls n^{\varepsilon}$ for all sufficiently large $n$.  
This development initiated a series of works culminating in the final affirmative solution of Bourgain's problem 
by Klartag and Lehec~\cite{KL}, following an important contribution by Guan~\cite{Guan}.  
Shortly thereafter, Bizeul~\cite{Bizeul-2025} provided another proof of the conjecture.

\medskip
The study of $E(K)$ in the non-symmetric case began with the work of Banaszczyk, Litvak, Pajor, and Szarek~\cite{Banaszczyk-Litvak-Pajor-Szarek-1999}, who showed that if $K$ is a convex body in ${\mathbb R}^n$ in John’s position (i.e., its maximal-volume inscribed ellipsoid is the Euclidean unit ball), then
\[
M^{\ast}(K) \ls c \sqrt{n} \sqrt{\ln n}.
\]
Since $K \supseteq B_2^n$ in John’s position, we also have the trivial bound $M(K) \ls M(B_2^n) = 1$, hence $E(K) \ls c \sqrt{n} \sqrt{\ln n}$.
Rudelson~\cite{Rudelson-2000b} improved this to
\[
E(K) \ls c n^{1/3} (\ln n)^b
\]
for some absolute constant $b > 0$.  
This remained the best known bound until the recent work of Bizeul and Klartag.

After earlier estimates of order $n^{3/4} L_K$ for $M^{\ast}(K)$ in the isotropic position (see~\cite[Chapter~9]{BGVV-book}),  
E.~Milman~\cite{EMilman-2014} proved that if $K$ is a symmetric isotropic convex body in $\mathbb{R}^n$, then
\[
M^{\ast}(K) \ls c_1 \sqrt{n} (\ln n)^2 L_K,
\]
and the same bound extends to non-symmetric isotropic convex bodies.  
For the dual problem, estimating $M(K)$ in isotropic position, the first nontrivial results appeared in~\cite{Giannopoulos-Stavrakakis-Tsolomitis-Vritsiou-TAMS}.  
The best known bound in the symmetric case, due to Giannopoulos and E.~Milman~\cite{Giannopoulos-EMilman-2014}, was
\[
M(K) \ls \frac{C (n \ln n)^{1/3}}{\sqrt{n}},
\]
while in the non-symmetric case, Vritsiou~\cite{Vritsiou-2024} obtained the estimate 
\[
M(K) \ls \frac{c n^{5/11} (\ln n)^{5/22}}{\sqrt{n}}.
\]

\medskip
For background on isotropic convex bodies and log-concave measures, we refer to~\cite{BGVV-book};  
for general information on the local theory of normed spaces, see~\cite{AGA-book, AGA-book-2, Pisier-book}.

%%%%%%%%%%%%%%%%%%%%%%%%%%%%%%%%%%%%%%%%%%%%%%%%%%%%%%%%%%%%%%%%%%%%%%%%%%%%%%%%%%%%%%%%%%%%%%%%%%%%%%%%%%%%%%%%%%%%%%
\section{Geometric inequalities}\label{section:3}
%%%%%%%%%%%%%%%%%%%%%%%%%%%%%%%%%%%%%%%%%%%%%%%%%%%%%%%%%%%%%%%%%%%%%%%%%%%%%%%%%%%%%%%%%%%%%%%%%%%%%%%%%%%%%%%%%%%%%%%

In this section, we review several classical geometric inequalities that will be used in the sequel, beginning with the Blaschke--Santal\'{o} inequality.  

Let $K$ be a convex body in $\mathbb{R}^n$. The function $\vol_n(K)\,\vol_n((K-z)^{\circ})$,
defined on $\operatorname{int}(K)$, is strictly convex and attains a unique minimum at the \emph{Santal\'{o} point} $s(K)$ of $K$. The Blaschke--Santal\'{o} inequality asserts that
\[
\vol_n(K)\,\vol_n((K-s(K))^{\circ}) \ls \omega_n^2.
\]
Moreover, if $\operatorname{bar}(K)=0$, then $s(K^{\circ})=0$. Since $(K^{\circ})^{\circ}=K$, we obtain
\[
\vol_n(K)\,\vol_n(K^{\circ})
= \vol_n((K^{\circ}-s(K^{\circ}))^{\circ})\,\vol_n(K^{\circ})
\ls \omega_n^2.
\]
Hence we have the following classical result.

\begin{theorem}[Blaschke--Santal\'{o}]\label{th:B-S}
Let $K$ be a convex body in $\mathbb{R}^n$ such that either $\operatorname{bar}(K)=0$ or $s(K)=0$. Then,
\[
\vol_n(K)\,\vol_n(K^{\circ}) \ls \omega_n^2.
\]
\end{theorem}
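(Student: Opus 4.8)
The plan is to deduce Theorem~\ref{th:B-S} from the general form of the Blaschke--Santal\'o inequality recalled just above the statement, namely $\vol_n(K)\,\vol_n((K-s(K))^{\circ}) \ls \omega_n^2$ for an arbitrary convex body $K$, where $s(K)\in{\rm int}(K)$ is the Santal\'o point. I would take this general inequality as an external input; it is the analytically substantial ingredient, classically established by iterated Steiner symmetrization together with the strict convexity of $z \mapsto \vol_n((K-z)^{\circ})$ on ${\rm int}(K)$, which guarantees that the minimizer $s(K)$ exists and is unique. Given this, the theorem is a short reduction.

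First I would dispose of the easy case $s(K)=0$: here $(K-s(K))^{\circ} = K^{\circ}$, so the general inequality is literally $\vol_n(K)\,\vol_n(K^{\circ}) \ls \omega_n^2$ and there is nothing more to prove.

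The remaining case is ${\rm bar}(K)=0$, and here the key auxiliary fact is the duality between Santal\'o points and barycenters: for any convex body $L$ with $0\in{\rm int}(L)$ one has $s(L)=0$ if and only if ${\rm bar}(L^{\circ})=0$. This is seen by writing $\vol_n((L-z)^{\circ}) = \tfrac1n\int_{S^{n-1}} (h_L(\theta)-\langle\theta,z\rangle)^{-n}\,d\theta$, differentiating in $z$, and evaluating at $z=0$, which yields a positive multiple of $\int_{S^{n-1}}\theta\,\rho_{L^{\circ}}(\theta)^{n+1}\,d\theta$, i.e. a positive multiple of ${\rm bar}(L^{\circ})$; since by convexity $s(L)=0$ is equivalent to this gradient vanishing at the origin, the claim follows. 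Applying this with $L=K^{\circ}$ and using the reflexivity $(K^{\circ})^{\circ}=K$, the hypothesis ${\rm bar}(K)=0$ gives $s(K^{\circ})=0$. Now invoke the first case with $K^{\circ}$ in place of $K$: since $s(K^{\circ})=0$ we obtain $\vol_n(K^{\circ})\,\vol_n((K^{\circ})^{\circ})\ls\omega_n^2$, that is, $\vol_n(K)\,\vol_n(K^{\circ})\ls\omega_n^2$, as claimed.

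The only genuine obstacle is that both inputs—the classical Blaschke--Santal\'o inequality itself and the gradient computation identifying $s(L)=0$ with ${\rm bar}(L^{\circ})=0$—rest on standard but nontrivial machinery; once they are granted, Theorem~\ref{th:B-S} is a two-line argument. In the write-up I would simply cite these facts (e.g. from the references on the local theory of normed spaces listed above) rather than reprove them, and then present the two-case reduction exactly as sketched.
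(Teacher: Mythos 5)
Your proposal is correct and follows essentially the same reduction the paper uses: cite the classical Blaschke--Santal\'o inequality in the form $\vol_n(K)\,\vol_n((K-s(K))^{\circ})\ls\omega_n^2$, observe the $s(K)=0$ case is immediate, and in the $\operatorname{bar}(K)=0$ case pass to $K^{\circ}$ using the duality $\operatorname{bar}(K)=0\iff s(K^{\circ})=0$ together with reflexivity. The only difference is that you sketch the gradient computation justifying the barycenter--Santal\'o-point duality, which the paper simply asserts; the logical structure is identical.
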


A refinement of this inequality was given by Meyer and Pajor~\cite{Meyer-Pajor-1990}.  
For $\lambda \in (0,1)$, a hyperplane
\[
F = \{x \in \mathbb{R}^n : \langle x, u_F \rangle = \alpha_F\},
\]
where $u_F \in \mathbb{R}^n \setminus \{0\}$ and $\alpha_F \in \mathbb{R}$, is said to be \emph{$\lambda$-separating} for $K$ if
\[
\vol_n(\{x \in K : \langle x, u_F \rangle \gr \alpha_F\}) = \lambda\,\vol_n(K).
\]
Note that a $\lambda$-separating hyperplane necessarily intersects the interior of $K$.

\begin{theorem}[Meyer--Pajor]\label{th:meyer-pajor}
Let $K$ be a convex body in $\mathbb{R}^n$ and $F$ a $\lambda$-separating hyperplane for $K$, where $\lambda \in (0,1)$. Then, there exists $z \in \operatorname{int}(K) \cap F$ such that
\[
\vol_n(K)\,\vol_n((K-z)^{\circ}) \ls \frac{\omega_n^2}{4\lambda(1-\lambda)}.
\]
Moreover, $z$ is the unique point in $\operatorname{int}(K) \cap F$ such that
\[
\operatorname{bar}((K-z)^{\circ}) \in \{t u_F : t \in \mathbb{R}\}.
\]
\end{theorem}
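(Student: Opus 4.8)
The plan is to pin down the point $z$ by a variational argument and then prove the volume bound by symmetrization. First I would normalize: translating a point of $\operatorname{int}(K)\cap F$ to the origin and rotating, we may assume $u_F=e_n$ and $\alpha_F=0$, so $F=e_n^{\perp}$ and $0\in\operatorname{int}(K)\cap F$; both the hypotheses (convexity, $F$ being $\lambda$-separating) and the conclusion are invariant under this. Consider $\Phi(z):=\vol_n((K-z)^{\circ})$ on $\operatorname{int}(K)$. It is strictly convex and tends to $+\infty$ as $z\to\partial K$ (since $(K-z)^{\circ}$ becomes unbounded), so its restriction to the relatively open convex set $\operatorname{int}(K)\cap F$ has a unique minimizer $z$. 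Using $(K-z)^{\circ}=\{y:h_K(y)-\langle z,y\rangle\le 1\}$ and differentiating under the integral in $\Phi(z)=\tfrac1n\int_{S^{n-1}}(h_K(\theta)-\langle z,\theta\rangle)^{-n}\,d\theta$, one obtains the identity
\[
\nabla\Phi(z)=(n+1)\int_{(K-z)^{\circ}}y\,dy=(n+1)\,\vol_n((K-z)^{\circ})\,\operatorname{bar}((K-z)^{\circ}).
\]
The Euler equation for the constrained minimizer, $\nabla\Phi(z)\perp F$, is therefore equivalent to $\operatorname{bar}((K-z)^{\circ})\in\mathbb{R}\,u_F$, and strict convexity of $\Phi|_F$ forces this point to be unique, proving the ``moreover'' clause. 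Since $z$ realizes $\min_{w\in F}\Phi(w)$, it now suffices to exhibit \emph{some} $z^{\ast}\in\operatorname{int}(K)\cap F$ with $\vol_n(K)\vol_n((K-z^{\ast})^{\circ})\le \omega_n^2/(4\lambda(1-\lambda))$.

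For the volume bound I would symmetrize in directions parallel to $F$. A Steiner symmetrization with respect to a hyperplane $v^{\perp}$, $v\in S^{n-1}\cap F$, preserves $\vol_n(K)$ and the $\lambda$-separating property of $F$, because the halfspace $\{x_n\ge 0\}$ is a union of chords in direction $v$; and, by the convexity of the polar volume along shadow systems (Meyer--Reisner), of which Steiner symmetrization is a limiting case, it does not decrease the $F$-constrained functional $\min_{w\in F}\vol_n((K-w)^{\circ})$. Iterating over a dense sequence of directions in $S^{n-1}\cap F$ drives $K$ to a body of revolution $B$ about the axis $\mathbb{R}\,u_F$, for which, by rotational symmetry and convexity of $\Phi$, the minimizer of $\Phi$ over $F$ is the origin, the hyperplane $\{x_n=0\}$ still cuts off a fraction $\lambda$ of $\vol_n(B)=\vol_n(K)$, and consequently $\vol_n(K)\,\min_{w\in F}\vol_n((K-w)^{\circ})\le\vol_n(B)\,\vol_n(B^{\circ})$.

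It then remains to bound $\vol_n(B)\,\vol_n(B^{\circ})$ for a body of revolution $B=\{(x',t):|x'|\le g(t)\}$ with $g\ge 0$ concave, whose equatorial hyperplane $\{t=0\}$ splits $\vol_n(B)$ in ratio $\lambda:(1-\lambda)$. Writing $B^{\circ}$ again as a body of revolution, with radial profile the polar dual of the hypograph of $g$, and integrating in $t$, this collapses to a one-dimensional extremal problem; the elementary inputs are the interval computation $\vol_1(I)\,\vol_1((I-0)^{\circ})=\omega_1^2/(4\lambda(1-\lambda))$ for $I=[-\alpha,\beta]$ split in ratio $\lambda:(1-\lambda)$ at the origin, together with the inequality $4\,\omega_{n-1}^2\le n\,\omega_n^2$.

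The main obstacle is the symmetrization step: one must check simultaneously that the chosen $F$-parallel symmetrizations converge to a body of revolution, leave the ratio $\lambda$ unchanged, and---the delicate point---never decrease $\min_{w\in F}\vol_n((K-w)^{\circ})$, which is precisely where the shadow-system convexity is indispensable and where one cannot afford a symmetrization transverse to $F$. An alternative route, closer in spirit to the functional machinery of the later sections, is to deduce the estimate directly from a functional Blaschke--Santal\'o inequality with a barycenter-type normalization (Fradelizi--Meyer, Lehec) applied to a log-concave function built from the two halves of $K$ cut by $F$, the factor $4\lambda(1-\lambda)$ then arising from the one-dimensional marginal in the direction $u_F$.
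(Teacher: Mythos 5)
The paper does not prove Theorem~\ref{th:meyer-pajor}; it is quoted from Meyer and Pajor~\cite{Meyer-Pajor-1990}, so there is no ``paper proof'' to compare against. Your task was therefore to supply a proof from scratch, and the outline you propose captures the right structure but leaves the hard part as a sketch.

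The characterization of $z$ is argued correctly and cleanly. After normalizing $F=u_F^{\perp}$, the identity
\[
\nabla_z\,\vol_n((K-z)^{\circ}) \;=\; (n+1)\int_{(K-z)^{\circ}}y\,dy \;=\;(n+1)\,\vol_n((K-z)^{\circ})\,\operatorname{bar}((K-z)^{\circ})
\]
follows from differentiating $\frac1n\int_{S^{n-1}}(h_K(\theta)-\langle z,\theta\rangle)^{-n}\,d\theta$, and the first-order condition for the $F$-constrained minimum of the strictly convex function $\Phi(z)=\vol_n((K-z)^{\circ})$ is $\nabla\Phi(z)\in\Real u_F$, which (the gradient being a nonnegative multiple of $\operatorname{bar}((K-z)^{\circ})$) is exactly the stated barycenter condition; uniqueness is immediate from strict convexity of $\Phi|_F$. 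This part is complete.

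The volume bound is where the argument remains incomplete, precisely at the point you flag. The claim that Steiner symmetrization in a direction $v\in S^{n-1}\cap F$ cannot decrease $\min_{w\in F}\vol_n((K-w)^{\circ})$ is not a direct quotation of the Campi--Gronchi/Meyer--Reisner shadow-system convexity: that result gives, for each \emph{fixed} polar center $z$ contained in the interior of every member of the shadow system, that $t\mapsto\vol_n((K_t-z)^{\circ})^{-1}$ is convex. Applying it naively to the path $z(t)=(1-t)z_0+t\,R_vz_0$ joining the two optimizers only yields $\vol_n((S_vK-\bar z)^{\circ})\ge\vol_n((K-z_0)^{\circ})$ at the midpoint $\bar z$, which gives no control on $\min_{w\in F}\vol_n((S_vK-w)^{\circ})$ because the minimizer for $S_vK$ need not be $\bar z$. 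The correct route is to observe that, by uniqueness and $R_v$-invariance of $S_vK$, the minimizer $z_{1/2}$ for $S_vK$ is itself fixed by $R_v$, and then apply shadow-system convexity with the \emph{fixed} center $z_{1/2}$; this gives $\vol_n((S_vK-z_{1/2})^{\circ})^{-1}\le\frac12\bigl(\vol_n((K-z_{1/2})^{\circ})^{-1}+\vol_n((R_vK-z_{1/2})^{\circ})^{-1}\bigr)=\vol_n((K-z_{1/2})^{\circ})^{-1}\le \min_{w\in F}\vol_n((K-w)^{\circ})^{-1}$ and hence the desired monotonicity. Even this requires checking that $z_{1/2}$ lies in the interior of $K_t$ for all $t$, which can fail for the Steiner shadow system; one must extend the convexity statement to allow centers outside $K_t$ (setting the polar volume to $+\infty$) or perturb. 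Without this care, the step is a gap.

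Two further points. First, the reduction from a body of revolution $B$ to the one-dimensional problem is only indicated, and the constants must be tracked; the inequality $4\omega_{n-1}^2\le n\,\omega_n^2$ that you invoke is true (with equality at $n=1$) and does appear in the calculation for, e.g., a cone, but the general concave-profile case requires a genuine computation, not just a citation. Second, the alternative route you mention through functional Blaschke--Santal\'o (Fradelizi--Meyer, Lehec) is anachronistic relative to Meyer--Pajor~1990 but is a legitimate modern approach; if you pursue it, you would need to be precise about which normalization of the functional inequality produces the factor $4\lambda(1-\lambda)$ from the one-dimensional marginal in direction $u_F$.

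In summary: the ``moreover'' part is proved; the volume bound is a plausible and essentially classical outline, but the shadow-system monotonicity and the body-of-revolution reduction are each stated rather than proved, and the former requires a nontrivial fix as above.
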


In the opposite direction, the Bourgain--Milman inequality~\cite{BM} provides a universal lower bound.

\begin{theorem}[Bourgain--V.~Milman]\label{th:B-M}
Let $K$ be a convex body in $\mathbb{R}^n$ with $0 \in \operatorname{int}(K)$. Then,
\[
\vol_n(K)\,\vol_n(K^{\circ})
\gr \vol_n(K)\,\vol_n((K-s(K))^{\circ})
\gr c^n \omega_n^2,
\]
where $c>0$ is an absolute constant.
\end{theorem}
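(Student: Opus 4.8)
\emph{A proof plan.} The first inequality is free, and I would obtain the second by a short chain of elementary reductions down to the \emph{symmetric} reverse Santal\'o inequality, which I would then simply quote rather than reprove. For the first inequality: by the defining property of the Santal\'o point, the function $z\mapsto\vol_n((K-z)^{\circ})$ attains its minimum over $\operatorname{int}(K)$ at $s(K)$; since $0\in\operatorname{int}(K)$ by hypothesis, $\vol_n(K^{\circ})=\vol_n((K-0)^{\circ})\gr\vol_n((K-s(K))^{\circ})$, and multiplying through by $\vol_n(K)$ gives it.

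For the second inequality I would first normalize. Because $s(K-v)=s(K)-v$ and volume is translation invariant, replacing $K$ by $K-s(K)$ reduces the claim to the statement that \emph{every} convex body $L\subset\mathbb{R}^n$ with $0\in\operatorname{int}(L)$ satisfies $\vol_n(L)\,\vol_n(L^{\circ})\gr c^n\omega_n^2$. Next I would pass to symmetric bodies: set $L_{\mathrm{in}}=L\cap(-L)$, which is origin-symmetric and, since some ball $\rho B_2^n\subseteq L$ is itself symmetric, satisfies $0\in\operatorname{int}(L_{\mathrm{in}})$. Then $\vol_n(L)\gr\vol_n(L_{\mathrm{in}})$ trivially, while by the duality relation \eqref{eq:duality} we have $(L_{\mathrm{in}})^{\circ}=(L^{\circ})_{\mathrm{out}}=\operatorname{conv}(L^{\circ}\cup(-L^{\circ}))$, and since $0\in L^{\circ}$ this set is contained in $L^{\circ}-L^{\circ}$; hence the Rogers--Shephard inequality $\vol_n(A-A)\ls\binom{2n}{n}\vol_n(A)\ls 4^n\vol_n(A)$ gives $\vol_n((L_{\mathrm{in}})^{\circ})\ls 4^n\vol_n(L^{\circ})$. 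Combining, $\vol_n(L)\,\vol_n(L^{\circ})\gr 4^{-n}\vol_n(L_{\mathrm{in}})\,\vol_n((L_{\mathrm{in}})^{\circ})$, so it suffices to establish the reverse Santal\'o inequality $\vol_n(M)\,\vol_n(M^{\circ})\gr c_0^n\omega_n^2$ for origin-symmetric $M$.

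This last, symmetric statement is the heart of \cite{BM}, and it is the step I would cite rather than carry out. The classical route runs through V.~Milman's existence of an $M$-ellipsoid, equivalently the reverse Brunn--Minkowski inequality obtained from the quotient-of-subspace theorem: in an appropriate volume-preserving linear image, $M$ and $B_2^n$ become interchangeable up to covering numbers of order $e^{O(n)}$, and inserting this into the trivial identity $\vol_n(\mathcal{E})\,\vol_n(\mathcal{E}^{\circ})=\omega_n^2$ for ellipsoids yields the asserted exponential lower bound. Alternatively one could invoke Nazarov's complex-analytic proof (lower bounds for Bergman kernels on the tube domain $\mathbb{R}^n+iM^{\circ}$ via H\"ormander's $\bar\partial$-estimate) or Kuperberg's differential-geometric argument; both handle general convex bodies directly and so would dispense with the symmetrization step above.

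\emph{Main obstacle.} Everything outside this last step is routine bookkeeping with the definition of $s(K)$, the polarity identities \eqref{eq:duality}, and Rogers--Shephard; the genuine difficulty is the symmetric reverse Santal\'o inequality itself, every known proof of which rests on substantial machinery. For that reason I expect it to be the only real obstruction, and I would follow the present paper in stating Theorem~\ref{th:B-M} with a reference to \cite{BM} rather than a self-contained proof.
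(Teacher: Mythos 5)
The paper states Theorem~\ref{th:B-M} as a classical result with only a citation to \cite{BM} and no proof, which is exactly the conclusion you reach. Your added reasoning is nevertheless correct: the first inequality is the variational characterization of $s(K)$ as the minimizer of $z\mapsto\vol_n((K-z)^{\circ})$, and your reduction of the general case to the origin-symmetric one — via $L_{\mathrm{in}}=L\cap(-L)$, the duality $(L_{\mathrm{in}})^{\circ}=(L^{\circ})_{\mathrm{out}}\subseteq L^{\circ}-L^{\circ}$, and Rogers--Shephard — is a standard and valid argument that costs only a factor $4^{-n}$, so the proposal is sound and consistent with the paper's treatment.
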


Classical results of Rogers--Shephard~\cite{Rogers-Shephard-1957} and V.~Milman--Pajor~\cite{VMilman-Pajor-2000} compare the volume of a convex body with those of its inner and outer regularizations.

\begin{theorem}[Rogers--Shephard / V.~Milman--Pajor]\label{th:R-S}
Let $K$ be a convex body in $\mathbb{R}^n$. Then,
\[
2^n \vol_n(K) \ls \vol_n(K-K) \ls \binom{2n}{n}\vol_n(K) \ls 4^n \vol_n(K).
\]
Moreover, if $0 \in \operatorname{int}(K)$, then
\[
\vol_n(K_{\mathrm{out}}) = \vol_n(\operatorname{conv}(K, -K)) \ls 2^n \vol_n(K),
\]
and if $\operatorname{bar}(K)=0$, then
\[
\vol_n(K_{\mathrm{in}}) = \vol_n(K \cap (-K)) \gr 2^{-n} \vol_n(K).
\]
\end{theorem}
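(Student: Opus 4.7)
The theorem bundles four classical inequalities, all descending from Brunn--Minkowski in different guises. The lower bound $\vol_n(K-K)\gr 2^n\vol_n(K)$ follows at once from Brunn--Minkowski applied to $K$ and $-K$, which have equal volume. For the Rogers--Shephard upper bound $\vol_n(K-K)\ls\binom{2n}{n}\vol_n(K)$, the standard route is to consider the auxiliary function $g(z)=\vol_n(K\cap(z+K))$, supported on $K-K$ with $g(0)=\vol_n(K)$. Brunn--Minkowski makes $g^{1/n}$ concave, and since $g$ vanishes on $\partial(K-K)$, concavity along radii from $0$ yields $g(z)\gr(1-p_{K-K}(z))^n\vol_n(K)$. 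Combining the Fubini identity $\int_{K-K}g=\vol_n(K)^2$ with the polar-coordinates computation $\int_L(1-p_L(z))^n\,dz=\vol_n(L)/\binom{2n}{n}$ closes the argument, and $\binom{2n}{n}\ls 4^n$ is immediate from $\sum_k\binom{2n}{k}=4^n$.

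For the V.~Milman--Pajor lower bound $\vol_n(K_{\mathrm{in}})\gr 2^{-n}\vol_n(K)$ under $\operatorname{bar}(K)=0$, I would study $G(x)=\vol_n(K\cap(2x-K))$, which is supported on $K$ (since $2x\in K+K=2K$ forces $x\in K$) and is $1/n$-concave by Brunn--Minkowski. The change of variables $z=2x-y$ with Jacobian $2^{-n}$ gives, via Fubini, $\int_K G(x)\,dx=2^{-n}\vol_n(K)^2$, and a parallel first-moment computation identifies the barycenter of the measure $G(x)\,dx$ with $\operatorname{bar}(K)=0$. Jensen's inequality applied to the concave function $G^{1/n}$ against the probability measure $G(x)\,dx/\int G$ then gives $G(0)^{1/n}\gr\int G^{1+1/n}/\int G$, and a H\"older inequality with exponents $(n+1)/n$ and $n+1$, yielding $\int G^{1+1/n}\gr(\int G)^{(n+1)/n}\vol_n(K)^{-1/n}$, combines with this to produce $G(0)\gr\int G/\vol_n(K)=2^{-n}\vol_n(K)$, i.e., $\vol_n(K_{\mathrm{in}})\gr 2^{-n}\vol_n(K)$. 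The companion upper bound $\vol_n(K_{\mathrm{out}})\ls 2^n\vol_n(K)$ under $0\in\operatorname{int}(K)$ is proved by an analogous Rogers--Shephard-style argument based on the Carath\'eodory representation $K_{\mathrm{out}}=\{u-v:p_K(u)+p_K(v)\ls 1\}$.

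The delicate point is pinning down the sharp $2^{\pm n}$ constants in the last two parts: a Fradelizi-style log-concavity bound would introduce a spurious $e^n$ factor, and it is essential to exploit the full $1/n$-concavity of the auxiliary volume functions via the Jensen--H\"older chain rather than log-concavity alone.
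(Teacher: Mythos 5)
The paper records this theorem as classical, with references to Rogers--Shephard and V.~Milman--Pajor, and does not include a proof, so there is no in-paper argument to compare your attempt against. Your proposal is, however, correct. The Brunn--Minkowski lower bound, the Rogers--Shephard argument via the covering function $g(z)=\vol_n(K\cap(z+K))$ together with the Beta-integral identity $\int_L(1-p_L)^n\,dz=\vol_n(L)/\binom{2n}{n}$, and the Jensen--H\"older treatment of $G(x)=\vol_n(K\cap(2x-K))$ for the V.~Milman--Pajor bound are all standard and carried through correctly: the Fubini identities $\int_{K-K}g=\vol_n(K)^2$ and $\int_K G=2^{-n}\vol_n(K)^2$, the $1/n$-concavity of $g$ and $G$ from Brunn--Minkowski, and the computation showing that $G\,dx$ has barycenter $0$ when $\operatorname{bar}(K)=0$ all check out, and the Jensen--H\"older chain does produce $G(0)\gr 2^{-n}\vol_n(K)$. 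The one piece you only sketch is $\vol_n(K_{\mathrm{out}})\ls 2^n\vol_n(K)$; for completeness, your outline does close. With $h(z)=\vol_n\left(\left\{u:p_K(u)+p_K(u-z)\ls 1\right\}\right)$, Brunn--Minkowski shows $h^{1/n}$ is concave on $K_{\mathrm{out}}$, one has $h(0)=\vol_n(K/2)=2^{-n}\vol_n(K)$ and $h\equiv 0$ on $\partial K_{\mathrm{out}}$, and Fubini with the substitution $w=u-z$ followed by the same Beta computation gives $\int_{K_{\mathrm{out}}}h=\vol_n(K)^2/\binom{2n}{n}$. Radial concavity then yields $h(z)\gr(1-p_{K_{\mathrm{out}}}(z))^n h(0)$, and integrating over $K_{\mathrm{out}}$ gives $\vol_n(K)^2/\binom{2n}{n}\gr 2^{-n}\vol_n(K)\cdot\vol_n(K_{\mathrm{out}})/\binom{2n}{n}$, which is the desired bound.
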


If $s(K)=0$, then $\operatorname{bar}(K^{\circ})=0$, and since $(K_{\mathrm{in}})^{\circ} = (K^{\circ})_{\mathrm{out}}$, the Bourgain--Milman inequality yields
\[
2^n \vol_n(K_{\mathrm{in}})\,\vol_n(K^{\circ})
\gr \vol_n(K_{\mathrm{in}})\,\vol_n((K^{\circ})_{\mathrm{out}})
\gr c^n \omega_n^2
\gr c^n \vol_n(K)\,\vol_n(K^{\circ}),
\]
and hence
\[
\vol_n(K_{\mathrm{in}}) \gr (c/2)^n \vol_n(K),
\]
an observation due to Rudelson~\cite{Rudelson-2000b}.

The following inequality estimates the product of the volumes of a projection of a convex body and the corresponding orthogonal section 
(see \cite{Rogers-Shephard-1958} for the first and \cite{Spingarn-1993} for the second claim).

\begin{theorem}[Rogers--Shephard / Spingarn]\label{th:R-S-S}
Let $K$ be a convex body in $\mathbb{R}^n$ with $0 \in \operatorname{int}(K)$. Then, for every $1 \ls k \ls n-1$ and any $H \in G_{n,k}$,
\[
\vol_k(P_H(K))\,\vol_{n-k}(K \cap H^{\perp}) \ls \binom{n}{k}\,\vol_n(K).
\]
If $\operatorname{bar}(K)=0$, then also
\[
\vol_n(K) \ls \vol_k(P_H(K))\,\vol_{n-k}(K \cap H^{\perp}).
\]
\end{theorem}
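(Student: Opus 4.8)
The plan is to reduce both inequalities to a one‑dimensional comparison. Fix $H\in G_{n,k}$, write $D=P_H(K)\subseteq H$, and for $z\in D$ set $g(z)=\vol_{n-k}(K\cap(z+H^{\perp}))$. Since $K$ is convex, the Brunn--Minkowski inequality shows that $g^{1/(n-k)}$ is concave on $D$; Fubini gives $\vol_n(K)=\int_D g(z)\,dz$; and since $0\in\operatorname{int}(K)$ we have $0\in\operatorname{int}(D)$, $g(0)=\vol_{n-k}(K\cap H^{\perp})$ and $\vol_k(D)=\vol_k(P_H(K))$. So everything comes down to comparing $\int_D g$ with $\vol_k(D)\,g(0)$ in the two directions.

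For the upper bound (the Rogers--Shephard part) I would use the lemma: if $L\subseteq\mathbb{R}^d$ is a convex body with $0\in L$ and $h:L\to[0,\infty)$ has $h^{1/j}$ concave with $h(0)=\|h\|_{\infty}$, then $\vol_d(L)\,h(0)\le\binom{d+j}{j}\int_L h$. One proves this by observing that $\lambda L\subseteq\{h\ge(1-\lambda)^j h(0)\}$ for $\lambda\in[0,1]$ (from concavity of $h^{1/j}$ together with $h\ge0$), so $\vol_d(\{h>t\})\ge(1-(t/h(0))^{1/j})^d\,\vol_d(L)$ for $0\le t\le h(0)$, and integrating this against the layer‑cake formula produces a Beta integral equal to $\binom{d+j}{j}^{-1}$. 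Since $K\cap H^{\perp}$ need not be the largest $H^{\perp}$‑section of $K$, I would pick $z^{\ast}\in D$ with $g(z^{\ast})=\max_D g$, apply the lemma to $w\mapsto g(w+z^{\ast})$ on $D-z^{\ast}\ni0$ with $d=k$ and $j=n-k$, and conclude, using $\vol_k(D-z^{\ast})=\vol_k(D)$ and $g(z^{\ast})\ge g(0)$, that
\[
\vol_k(P_H(K))\,\vol_{n-k}(K\cap H^{\perp})\le\vol_k(D)\,g(z^{\ast})\le\binom{n}{k}\vol_n(K).
\]

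For the lower bound (the Spingarn part) the hypothesis $\operatorname{bar}(K)=0$ enters only through its projection onto $H$, which forces $\int_D z\,g(z)\,dz=0$: the origin is the barycenter of the measure $g(z)\,dz$ on $D$. The core is then the following ``Spingarn lemma'': if $C\subseteq\mathbb{R}^d$ is a convex body with $0\in\operatorname{int}(C)$, $j\ge1$, and $\phi:C\to[0,\infty)$ has $\phi^{1/j}$ concave and $\int_C x\,\phi(x)\,dx=0$, then $\int_C\phi\le\vol_d(C)\,\phi(0)$; applied with $C=D$, $\phi=g$, $d=k$, $j=n-k$ this gives precisely $\vol_n(K)\le\vol_k(P_H(K))\,\vol_{n-k}(K\cap H^{\perp})$. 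To prove the lemma I would normalise $\phi(0)=1$, put $\psi=\phi^{1/j}$, pick a supergradient $v$ of the concave function $\psi$ at $0$, so that $0\le\psi(x)\le\ell(x):=1+\langle v,x\rangle$ on $C$ — in particular $\ell\ge0$ and $0\le\phi\le\ell^j$ on $C$ — and then note that for every $\lambda\in\mathbb{R}^d$,
\[
\int_C\phi=\int_C\phi(x)\,(1+\langle\lambda,x\rangle)\,dx\le\int_C\ell(x)^j\,(1+\langle\lambda,x\rangle)_+\,dx,
\]
where the equality uses $\int_C x\,\phi(x)\,dx=0$. The right‑hand side is convex in $\lambda$ and tends to $+\infty$ as $|\lambda|\to\infty$ (because $0\in\operatorname{int}(C)$ and $\ell>0$ a.e.), so it is minimised at some $\lambda_0$; the stationarity condition there reads $\int_{C'}x\,\ell^j\,dx=0$ with $C'=C\cap\{x:1+\langle\lambda_0,x\rangle\ge0\}$, and the minimum value equals $\int_{C'}\ell^j\,dx$. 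In particular $\int_{C'}\langle v,x\rangle\,\ell^j\,dx=0$, i.e.\ $\int_{C'}w^{j+1}\,dx=\int_{C'}w^j\,dx$ with $w:=\ell=1+\langle v,x\rangle\ge0$ on $C'$. The elementary pointwise bounds $w^j-1\le j\,w^j(w-1)$ for $w\ge1$ and $1-w^j\ge j\,w^j(1-w)$ for $0\le w<1$ (both from $\tfrac{w^j-1}{w-1}=\sum_{i=0}^{j-1}w^i$) then give $\int_{C'}(w^j-1)\,dx\le j\int_{C'}w^j(w-1)\,dx=0$, whence $\int_C\phi\le\int_{C'}\ell^j\,dx=\int_{C'}w^j\,dx\le\vol_d(C')\le\vol_d(C)$, which is the claim.

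The routine parts are the upper bound and the reduction to the one‑dimensional comparison; the genuine obstacle is the Spingarn lemma, and within it the step that turns the moment condition $\int_C x\,\phi=0$ into a volume inequality. The delicate points there are the existence of the minimiser $\lambda_0$ together with the differentiation under the integral sign, and---above all---the fact that the optimal slab $C'$ may be a proper cap of $C$ rather than all of $C$, which is exactly why one settles for the crude step $\vol_d(C')\le\vol_d(C)$ at the end; the closing power‑sum inequalities are elementary. (The two assertions are the classical inequalities of Rogers--Shephard \cite{Rogers-Shephard-1958} and Spingarn \cite{Spingarn-1993}, respectively.)
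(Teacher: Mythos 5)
The paper states Theorem~\ref{th:R-S-S} as a classical result with references to Rogers--Shephard~\cite{Rogers-Shephard-1958} and Spingarn~\cite{Spingarn-1993} and gives no proof, so there is nothing in the text to compare your argument against; what follows is an assessment on its own merits. Your proof is correct. The reduction of both inequalities to the one-dimensional comparison of $\int_D g$ against $\vol_k(D)\,g(0)$, with $D=P_H(K)$, $g(z)=\vol_{n-k}(K\cap(z+H^{\perp}))$, and $g^{1/(n-k)}$ concave by Brunn--Minkowski, is exactly the right framework, and the Rogers--Shephard half is the standard argument: the inclusion $\lambda L\subseteq\{h\ge(1-\lambda)^{j}h(0)\}$, the Beta integral giving $1/\binom{d+j}{j}$, and the shift to the maximizer $z^{*}$ of $g$ (so that the lemma's hypothesis $h(0)=\|h\|_{\infty}$ holds after translation) all work.

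The ``Spingarn lemma'' and its variational proof are also sound. The moment condition $\int_{C}x\phi=0$ lets you insert $1+\langle\lambda,x\rangle$ for free; the majorant $F(\lambda)=\int_{C}\ell^{j}(1+\langle\lambda,x\rangle)_{+}\,dx$ is convex; the stationarity identity $\int_{C'}x\ell^{j}=0$ with $C'=C\cap\{1+\langle\lambda_{0},x\rangle\ge0\}$ gives $\int_{C'}w^{j+1}=\int_{C'}w^{j}$ for $w=\ell=1+\langle v,x\rangle\ge0$; and the pointwise bound $w^{j}-1\le j\,w^{j}(w-1)$ closes the estimate $\int_{C'}w^{j}\le\vol_{d}(C')\le\vol_{d}(C)$. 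Two details are worth recording explicitly if you write this up. First, differentiability of $F$: for each $\lambda$ the non-smooth set $\{x:1+\langle\lambda,x\rangle=0\}$ has Lebesgue measure zero, and the difference quotients of $\lambda\mapsto(1+\langle\lambda,x\rangle)_{+}$ are dominated by $|x|$, so $|x|\,\ell(x)^{j}\in L^{1}(C)$ justifies differentiation under the integral. Second, coercivity of $F$: from $0\in\operatorname{int}(C)$ one has a ball $2rB^{d}\subseteq C$, and on the cap $\frac{r\lambda}{|\lambda|}+\frac{r}{2}B^{d}\subseteq C$ one has $(1+\langle\lambda,x\rangle)_{+}\ge 1+\frac{r}{2}|\lambda|$; since $\ell^{j}>0$ a.e.\ on $C$ and the caps vary over a compact family, the integral of $\ell^{j}$ over them is bounded below, giving $F(\lambda)\to\infty$. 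Finally, the geometric-sum identity $\frac{w^{j}-1}{w-1}=\sum_{i=0}^{j-1}w^{i}$ requires $j$ to be a positive integer; this is satisfied here since $j=n-k$, but would need a convexity argument for general real $j\ge1$.
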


Rudelson's inequality~\cite{Rudelson-2000a} compares the volume of a central section of $K-K$ with that of the maximal corresponding section of $K$.

\begin{theorem}[Rudelson]\label{th:rudelson}
Let $K$ be a convex body in $\mathbb{R}^n$. Then, for every $1 \ls k \ls n-1$ and any $H \in G_{n,k}$,
\[
\vol_k((K-K)\cap H)^{1/k} \ls c \min\left\{ \sqrt{k}, \frac{n}{k} \right\} \max_{x \in \mathbb{R}^n} \vol_k(K \cap (x+H))^{1/k}.
\]
\end{theorem}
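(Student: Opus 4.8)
The plan is to establish the two bounds $c\,(n/k)$ and $c\sqrt{k}$ by separate arguments and then take the minimum, since the regimes $k\gr n^{2/3}$ and $k\ls n^{2/3}$ behave quite differently. Write $V:=\max_{x\in\R^{n}}\vol_k(K\cap(x+H))$, and note that $K\cap(x+H)$ depends only on $P_{H^{\perp}}x$, so the maximum may be taken over $x\in H^{\perp}$. The common starting point is the fibered description
\[
(K-K)\cap H=\bigcup_{u\in P_{H^{\perp}}(K)}\bigl[(K\cap(u+H))-(K\cap(u+H))\bigr],
\]
valid because $w=a-b\in H$ with $a,b\in K$ forces $P_{H^{\perp}}a=P_{H^{\perp}}b$, so $a,b$ lie in a common section $K\cap(u+H)$; each set in the union is an origin-symmetric convex subset of $H$.

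For the bound $c\,(n/k)$, I would introduce the auxiliary convex body
\[
T=\bigl\{(u,s-t)\in H^{\perp}\oplus H:\ s,t\in H,\ u+s\in K,\ u+t\in K\bigr\}\subset\R^{n}.
\]
A direct check shows that $T$ is convex and, since $0\in{\rm int}(K)$, that $0\in{\rm int}(T)$, so $T$ is a convex body. The purpose of $T$ is that it turns the two quantities to be compared into a projection and a section of the \emph{same} body: one verifies $P_{H}(T)=(K-K)\cap H$ and $T\cap H^{\perp}=P_{H^{\perp}}(K)$, while Fubini in the $u$-variable together with the Rogers--Shephard inequality (Theorem~\ref{th:R-S}) applied fiberwise in $\R^{k}$ gives
\[
\vol_{n}(T)=\int_{P_{H^{\perp}}(K)}\vol_{k}\bigl((K\cap(u+H))-(K\cap(u+H))\bigr)\,du\ls\binom{2k}{k}\vol_{n}(K).
\]
Applying the Rogers--Shephard/Spingarn inequality (the first inequality of Theorem~\ref{th:R-S-S}, which requires only $0\in{\rm int}(T)$) to $T$ with the $k$-subspace $H$ yields
\[
\vol_{k}\bigl((K-K)\cap H\bigr)\,\vol_{n-k}\bigl(P_{H^{\perp}}K\bigr)\ls\binom{n}{k}\binom{2k}{k}\vol_{n}(K),
\]
and Fubini along $H$ gives $\vol_{n}(K)=\int_{P_{H^{\perp}}(K)}\vol_{k}(K\cap(u+H))\,du\ls\vol_{n-k}(P_{H^{\perp}}K)\cdot V$. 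Cancelling $\vol_{n-k}(P_{H^{\perp}}K)$ and taking $k$-th roots, with $\binom{n}{k}^{1/k}\ls en/k$ and $\binom{2k}{k}^{1/k}\ls 4$, produces $\vol_{k}((K-K)\cap H)^{1/k}\ls 4e\,(n/k)\,\max_{x}\vol_{k}(K\cap(x+H))^{1/k}$, which is the asserted estimate in this regime.

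For the bound $c\sqrt{k}$ (the better one precisely when $k\ls n^{2/3}$) I would first reduce to a projection: from $(K-K)\cap H\subseteq P_{H}(K)-P_{H}(K)$ and the Rogers--Shephard inequality in $\R^{k}$ one gets $\vrad\bigl((K-K)\cap H\bigr)\ls 4\,\vrad(P_{H}K)$, so it suffices to show $\vrad(P_{H}K)\ls c\sqrt{k}\,\max_{x}\vrad(K\cap(x+H))$. This is the technical heart of the proof. A natural route is to place a maximal section $K\cap(x_{0}+H)$ into John (or Loewner) position inside its affine hull and use the resulting inscribed and circumscribed ellipsoids to control its difference body, then transfer this control to the projection. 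I expect the genuine obstacle to be precisely the passage from the \emph{global} projection $P_{H}K$ to a \emph{single} section: a soft John-ellipsoid comparison only yields a factor $k$, so extracting the $\sqrt{k}$ requires exploiting the origin-symmetry of $P_{H}K-P_{H}K$ together with a covering-number or $M$-ellipsoid estimate for symmetric bodies in $\R^{k}$, along the lines of Rudelson's original argument. Combining the two bounds and taking the minimum finishes the proof.
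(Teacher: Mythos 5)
The paper does not prove this statement; it is cited as a known result from Rudelson's paper \cite{Rudelson-2000a}, so there is no internal proof to compare against. Evaluating your proposal on its own terms: the two bounds have very different status.

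Your argument for the bound $c\,(n/k)$ is correct and clean. The auxiliary body $T$ is indeed convex with $0$ in its interior, the identifications $P_H(T)=(K-K)\cap H$ and $T\cap H^{\perp}=P_{H^{\perp}}(K)$ are verified readily, the Fubini-plus-Rogers--Shephard estimate $\vol_n(T)\ls\binom{2k}{k}\vol_n(K)$ holds, and applying the projection--section inequality of Theorem~\ref{th:R-S-S} to $T$ and cancelling $\vol_{n-k}(P_{H^{\perp}}K)$ gives exactly $\vol_k((K-K)\cap H)\ls\binom{n}{k}\binom{2k}{k}V$, hence the stated bound with constant $4e$.

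The bound $c\sqrt{k}$, however, is where your proposal breaks down, and not merely because the details are unfinished. The reduction you propose --- pass from $(K-K)\cap H$ to $P_H(K)-P_H(K)$ and then try to prove $\vrad(P_H K)\ls c\sqrt{k}\,\max_x\vrad(K\cap(x+H))$ --- asks for an inequality that is \emph{false}. Take $K=B_\infty^n$ (symmetric, so the maximal section is the central one), $k=1$, and $\theta\in S^{n-1}$ of the form $\theta=(1/\sqrt{2},\,1/\sqrt{2(n-1)},\dots,1/\sqrt{2(n-1)})$. Then $\vol_1(P_\theta K)=2\|\theta\|_1\approx 2\sqrt{n/2}$ while $\vol_1(K\cap\mathrm{span}(\theta))=2/\|\theta\|_\infty=2\sqrt{2}$, so the ratio $\vrad(P_\theta K)/\vrad(K\cap\mathrm{span}\theta)\approx\sqrt{n}/2$, which is unbounded as $n\to\infty$ even though $\sqrt{k}=1$. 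In this example Rudelson's inequality itself is trivially fine, since $(K-K)\cap H=2(K\cap H)$; the factor $\approx\sqrt{n}$ is lost precisely in the step $(K-K)\cap H\subseteq P_H(K-K)$, which replaces a central section of the symmetric body $K-K$ by its projection --- a genuinely lossy move. So the route through $P_H K$ cannot yield the $\sqrt{k}$ bound, and you would need to argue directly about the section $(K-K)\cap H$ (as Rudelson does, via an $M$-ellipsoid/covering-number argument applied to the difference body restricted to a maximal slab of sections) rather than through a projection.
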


An inequality of Fradelizi~\cite{Fradelizi-1997} compares the maximal section of a convex body with the section passing through its barycenter.

\begin{theorem}[Fradelizi]\label{th:fradelizi}
Let $K$ be a convex body in $\mathbb{R}^n$. Then, for every $1 \ls k \ls n-1$ and any $H \in G_{n,k}$,
\[
\max_{x \in \mathbb{R}^n} \vol_k(K \cap (x+H))^{1/k}
\ls \frac{n+1}{k+1}\,\vol_k(K \cap (\operatorname{bar}(K)+H))^{1/k}.
\]
\end{theorem}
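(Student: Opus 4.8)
\medskip
\noindent\emph{Proof plan.} The plan is to pass from the body $K$ to the ``section function'' $\phi(y):=\vol_k(K\cap(y+H))$, regarded as a function on $H^{\perp}\cong\mathbb{R}^{m}$ with $m:=n-k$, and then to reduce the statement to a sharp reverse H\"older inequality for concave functions. \emph{Step 1 (reduction to the marginal).} First I would record three facts about $\phi$, which is supported on a convex body $\Omega\subset\mathbb{R}^m$: (i) since $K\cap(x+H)$ depends only on $P_{H^{\perp}}x$, we have $\max_{x\in\mathbb{R}^n}\vol_k(K\cap(x+H))=\max_{\Omega}\phi$; (ii) by Fubini, $P_{H^{\perp}}(\operatorname{bar}(K))$ equals the barycenter $b$ of the measure $\phi(y)\,dy$, and since $\operatorname{bar}(K)+H=b+H$ this gives $\vol_k(K\cap(\operatorname{bar}(K)+H))=\phi(b)$; (iii) for $y_t=(1-t)y_0+ty_1$ with $y_0,y_1\in\Omega$, convexity of $K$ yields $K\cap(y_t+H)\supseteq(1-t)\bigl(K\cap(y_0+H)\bigr)+t\bigl(K\cap(y_1+H)\bigr)$ inside the plane $y_t+H$, so by the Brunn--Minkowski inequality in $H\cong\mathbb{R}^k$ the function $\psi:=\phi^{1/k}$ is concave on $\Omega$. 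Hence the theorem is equivalent to
\[
\psi(b)\;\gr\;\frac{k+1}{n+1}\,\max_{\Omega}\psi,\qquad b=\operatorname{bar}(\psi^{k}\,dy).
\]

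\emph{Step 2 (a supporting hyperplane at the barycenter).} Next I would exploit concavity of $\psi$: at the interior point $b$ there is an affine function $a$ with $a\gr\psi$ on $\Omega$ and $a(b)=\psi(b)$. Integrating $a$ against the probability measure $\psi^{k}\,dy\big/\!\int_{\Omega}\psi^{k}$, whose barycenter is $b$, and using affinity of $a$,
\[
\psi(b)=a(b)=\frac{\int_{\Omega}a\,\psi^{k}\,dy}{\int_{\Omega}\psi^{k}\,dy}\;\gr\;\frac{\int_{\Omega}\psi^{k+1}\,dy}{\int_{\Omega}\psi^{k}\,dy}.
\]

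\emph{Step 3 (a sharp reverse H\"older inequality).} It then remains to prove that for every nonnegative concave function $\psi$ on a convex body $\Omega\subset\mathbb{R}^{m}$ and every $p\gr0$,
\[
\frac{\int_{\Omega}\psi^{p+1}\,dy}{\int_{\Omega}\psi^{p}\,dy}\;\gr\;\frac{p+1}{p+m+1}\,\max_{\Omega}\psi ,
\]
since for $p=k$, $m=n-k$ the constant is exactly $\tfrac{k+1}{n+1}$. To prove this, I would normalize $\max_{\Omega}\psi=1$, set $V(s)=\vol_m\{\psi\gr s\}$, and use the layer-cake identities $\int_{\Omega}\psi^{p}=p\int_0^1 s^{p-1}V(s)\,ds$ (and similarly with $p+1$) together with the Brunn--Minkowski fact that $W:=V^{1/m}$ is concave and nonincreasing on $[0,1]$. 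The inequality then reduces to the scale-invariant one-dimensional estimate $\int_0^1 s^{p}W^{m}\gr\frac{p}{p+m+1}\int_0^1 s^{p-1}W^{m}$; normalizing $W(0)=1$, concavity and $W(1)\gr0$ give $W(s)\gr1-s$ and, more sharply, that $s\mapsto W(s)/(1-s)$ is nondecreasing (the sign of its derivative is that of $W(s)+(1-s)W'(s)\gr W(1)\gr0$). Symmetrizing the corresponding double integral makes the two sign factors agree, which yields the estimate, with equality for $W(s)=1-s$ --- that is, when $K$ is a simplex, showing $\tfrac{n+1}{k+1}$ is optimal.

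I expect Step 3 to be the main obstacle: Steps 1--2 are essentially bookkeeping, whereas pinning down the \emph{optimal} constant $\frac{p+1}{p+m+1}$ requires combining the layer-cake representation, the Brunn--Minkowski concavity of the level-set volumes, and the monotonicity/symmetrization argument above. One could instead invoke the Borell--Berwald reverse H\"older inequalities for concave functions, but the direct route keeps the extremal simplex visible and gives the sharp constant immediately.
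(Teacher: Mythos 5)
The paper does not actually prove Theorem~\ref{th:fradelizi}; it is quoted from Fradelizi~\cite{Fradelizi-1997} as background, so there is no in-paper argument to compare with. Your proof is correct, and in fact it is essentially the original approach: Steps~1--2 are the standard reduction (concavity of $\psi=\phi^{1/k}$ on $\Omega=P_{H^\perp}K$ via Brunn--Minkowski, identification of $b=P_{H^\perp}\operatorname{bar}(K)$ with the barycenter of $\phi\,dy$, and a supporting affine function at $b$ giving $\psi(b)\gr \int\psi^{k+1}/\int\psi^k$), after which everything rests on the one-dimensional Step~3. That step is right, but the sentence ``symmetrizing the corresponding double integral makes the two sign factors agree'' compresses a computation worth spelling out. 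Set $r(s)=W(s)/(1-s)$, which you correctly show is nondecreasing on $[0,1)$ by concavity, and let $d\mu(s)=s^{p-1}(1-s)^m\,ds$. The Beta-function identity $B(p+1,m+1)=\tfrac{p}{p+m+1}B(p,m+1)$ shows that the linear weight $u(s)=(p+m+1)s-p$ satisfies $\int_0^1 u\,d\mu=0$, and the target inequality $\int_0^1 s^pW^m\,ds\gr\tfrac{p}{p+m+1}\int_0^1 s^{p-1}W^m\,ds$ is exactly $\int_0^1 u\,r^m\,d\mu\gr 0$. This follows from Chebyshev's integral inequality for the two nondecreasing functions $u$ and $r^m$ against $\mu$, i.e.\ from expanding
\[
\iint_{[0,1]^2}\bigl[u(s)-u(t)\bigr]\bigl[r(s)^m-r(t)^m\bigr]\,d\mu(s)\,d\mu(t)\gr 0 ,
\]
which is what your ``symmetrization'' refers to; equality forces $r\equiv 1$, i.e.\ $W(s)=1-s$, the simplex case, recovering sharpness of $\tfrac{n+1}{k+1}$. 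With that spelled out, the argument is complete; the only minor hygiene points are to note that $b$ lies in the interior of $\Omega$ (so a supporting affine function exists) and that concavity of $W$ is used via one-sided derivatives, neither of which affects the conclusion.
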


Combining these two results yields
\begin{equation}\label{eq:rudelson-fradelizi}
\vol_k((K-K)\cap H)^{1/k}
\ls c\left( \frac{n}{k} \right)^2 \vol_k(K \cap (\operatorname{bar}(K)+H))^{1/k}
\end{equation}
for every $1 \ls k \ls n-1$ and $H \in G_{n,k}$.
A direct proof of~\eqref{eq:rudelson-fradelizi} with improved dependence on $n/k$ will be given in Theorem~\ref{th:sections} (Section~\ref{section:5}).

Let $K$ be a convex body in $\mathbb{R}^n$ with $0 \in \operatorname{int}(K)$.  
Recall that for $1 \ls k \ls n-1$ and $H \in G_{n,k}$, the projection $P_H(K)$ is the polar body of $K^{\circ} \cap H$ in $H$.  
If $K$ is symmetric, then $P_H(K)$ and $K^{\circ} \cap H$ are polar symmetric convex bodies in $H$, so that
\[
\vrad(P_H(K))\,\vrad(K^{\circ}\cap H) \ls 1
\]
by the Blaschke--Santal\'{o} inequality.  
Vritsiou~\cite{Vritsiou-2024} extended this to non-symmetric convex bodies.

\begin{theorem}[Vritsiou]\label{th:vritsiou}
Let $K$ be a convex body in $\mathbb{R}^n$ such that either $\operatorname{bar}(K)=0$ or $s(K)=0$. Then, for every $1 \ls k \ls n-1$ and any $H \in G_{n,k}$,
\[
\vrad(P_H(K))\,\vrad(K^{\circ}\cap H) \ls c\,\frac{n}{k},
\]
where $c>0$ is an absolute constant.
\end{theorem}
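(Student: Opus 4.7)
The plan is to reduce the asymmetric product $\vrad(P_H(K))\,\vrad(K^\circ\cap H)$ to a symmetric Blaschke--Santal\'o configuration in $H$, and then control the residual loss by a section-comparison inequality whose sharp $n/k$ dependence is the heart of the argument. Without loss of generality I treat the case $\operatorname{bar}(K)=0$; the case $s(K)=0$ is handled by a parallel argument in which $K_{\mathrm{in}}$ replaces $K_{\mathrm{out}}$ and the comparison becomes one between projections rather than sections.

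The symmetrization step is immediate. Since $K\subseteq K_{\mathrm{out}}$, one has $P_H(K)\subseteq P_H(K_{\mathrm{out}})$, and $P_H(K_{\mathrm{out}})$ is symmetric in $H$. By the duality relations \eqref{eq:duality}, its polar in $H$ equals $(K^\circ)_{\mathrm{in}}\cap H$. Applying Blaschke--Santal\'o (Theorem \ref{th:B-S}) in $H$ to this symmetric polar pair gives $\vrad(P_H(K_{\mathrm{out}}))\,\vrad((K^\circ)_{\mathrm{in}}\cap H)\ls 1$, and hence $\vrad(P_H(K))\,\vrad((K^\circ)_{\mathrm{in}}\cap H)\ls 1$. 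The theorem therefore reduces to the section-comparison estimate
\[
\vrad(K^\circ\cap H)\ls\frac{C\,n}{k}\,\vrad((K^\circ)_{\mathrm{in}}\cap H) \qquad (\ast)
\]
for some absolute constant $C>0$. For the whole body, $(\ast)$ already holds with an absolute constant: $\operatorname{bar}(K)=0$ implies $s(K^\circ)=0$, and Rudelson's observation (stated after Theorem \ref{th:R-S}) yields $\vrad((K^\circ)_{\mathrm{in}})\gr c\,\vrad(K^\circ)$. The extra $n/k$ factor in $(\ast)$ reflects the loss of centering upon restriction to a $k$-dimensional subspace, and its sharpness is witnessed by the simplex example described after Theorem \ref{th:BS-weak}.

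To prove $(\ast)$, I would apply Meyer--Pajor's inequality (Theorem \ref{th:meyer-pajor}) inside $H$ to $L:=K^\circ\cap H$. By Gr\"unbaum, any hyperplane in $H$ through the centroid of $L$ is $\lambda$-separating with $\lambda\gr 1/e$, so Meyer--Pajor furnishes a point $z\in\operatorname{int}(L)$ satisfying $\vol_k(L)\,\vol_k((L-z)^\circ_H)\ls C\omega_k^2$, where $(\cdot)^\circ_H$ denotes polarity in $H$. The key remaining step, which I expect to be the main obstacle, is to show that $(L-z)^\circ_H$ contains a dilate of $(K^\circ)_{\mathrm{in}}\cap H$ by factor $c\,k/n$. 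This passes through an explicit support-function computation in $H$ (using $h_L(y)=h_{K^\circ}(y)$ for $y\in H$) together with the centering hypothesis $\operatorname{bar}(K)=0$ to control the magnitude of the shift $z$ via the Gr\"unbaum-type inclusion $-(L-\operatorname{bar}(L))\subseteq k\,(L-\operatorname{bar}(L))$ for the $k$-dimensional centered body $L-\operatorname{bar}(L)$. Once this inclusion is verified, combining with the Blaschke--Santal\'o bound from the previous paragraph yields $\vrad(P_H(K))\,\vrad(K^\circ\cap H)\ls C\,n/k$, completing the proof.
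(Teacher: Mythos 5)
The paper never proves Theorem~\ref{th:vritsiou}: it is quoted from \cite{Vritsiou-2024}, and the text only records that the proof there combines the Meyer--Pajor inequality (Theorem~\ref{th:meyer-pajor}) with the Gr\"unbaum-type inequalities for sections and projections of \cite{Stephen-Zhang-2017, M-Stephen-Zhang-2018}. Your opening reduction is correct and is the natural one: $P_H(K)\subseteq P_H(K_{\mathrm{out}})$, the polar of $P_H(K_{\mathrm{out}})$ in $H$ is $(K^{\circ})_{\mathrm{in}}\cap H$ by \eqref{eq:duality}, Blaschke--Santal\'{o} in $H$ gives $\vrad(P_H(K))\,\vrad((K^{\circ})_{\mathrm{in}}\cap H)\ls 1$, and everything reduces to your $(\ast)$, namely $\vrad(K^{\circ}\cap H)\ls C(n/k)\,\vrad((K^{\circ})_{\mathrm{in}}\cap H)$. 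But $(\ast)$ is where the entire content of the theorem lives --- it is exactly the sharp, polylog-free version of Theorem~\ref{th:sections} applied to $K^{\circ}$ --- and you do not prove it: you explicitly defer ``the main obstacle'' and write ``once this inclusion is verified.'' As written, your argument only reproves the weaker Theorem~\ref{th:BS-weak}.

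Two concrete problems with the deferred step. First, the inclusion you announce, that $(L-z)^{\circ_H}$ contains $c(k/n)\bigl((K^{\circ})_{\mathrm{in}}\cap H\bigr)=c(k/n)L_{\mathrm{in}}$, combined with Meyer--Pajor would give the product bound $\vrad(L)\,\vrad(L_{\mathrm{in}})\ls Cn/k$, not the ratio bound $(\ast)$; since nothing normalizes $\vrad(L_{\mathrm{in}})$ to be of order $1$, this does not close the argument. The inclusion that would work is the primal one, $L_{\mathrm{in}}\supseteq c(k/n)(L-z)$, which gives $(\ast)$ directly. Second, the tool you invoke to control the shift --- the $k$-dimensional Minkowski--Radon inclusion $-(L-\operatorname{bar}(L))\subseteq k\,(L-\operatorname{bar}(L))$ --- is centered at $\operatorname{bar}(L)$, yields a factor $k$ rather than $n/k$, and makes no use of the hypothesis $\operatorname{bar}(K)=0$ (or $s(K)=0$). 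The real difficulty is that the origin, forced on you by the \emph{global} centering of $K$, may sit far from the centroid of the $k$-dimensional section $L=K^{\circ}\cap H$; quantifying this volumetrically --- e.g.\ showing that a suitable hyperplane of $H$ through the origin is $\lambda$-separating for $L$ with $\lambda$ of order $(ck/n)^{k}$, so that the $\lambda^{-1/k}$ in Meyer--Pajor produces the $n/k$ --- is precisely what the Stephen--Zhang and Myroshnychenko--Stephen--Zhang inequalities supply, and it is the part missing from your proposal.
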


The proof of Theorem~\ref{th:vritsiou} combines the Meyer--Pajor refinement of the Blaschke--Santal\'{o} inequality (Theorem~\ref{th:meyer-pajor}) with Gr\"{u}nbaum-type inequalities due to Stephen, Zhang, and Myroshnychenko~\cite{Stephen-Zhang-2017, M-Stephen-Zhang-2018}.

%%%%%%%%%%%%%%%%%%%%%%%%%%%%%%%%%%%%%%%%%%%%%%%%%%%%%%%%%%%%%%%%%%%%%%%%%%%%%%%%%%%%%%%%%%%%%%%%%%%%%%%%%%%%%%%%%%%%%%%%%%
\section{Covering numbers of isotropic convex bodies}\label{section:4}
%%%%%%%%%%%%%%%%%%%%%%%%%%%%%%%%%%%%%%%%%%%%%%%%%%%%%%%%%%%%%%%%%%%%%%%%%%%%%%%%%%%%%%%%%%%%%%%%%%%%%%%%%%%%%%%%%%%%%%%%%%

Recall that if $A$ and $B$ are two convex bodies in $\mathbb{R}^n$, the \emph{covering number} $N(A,B)$ of $A$ by $B$ is the least
integer $N$ for which there exist $N$ translates of $B$ whose union covers $A$:
\begin{equation*}
N(A,B)
   = \min\Big\{ N \in \mathbb{N} :
      \exists\, x_1, \ldots, x_N \in \mathbb{R}^n
      \ \text{such that} \
      A \subseteq \bigcup_{j=1}^N (x_j + B) \Big\}.
\end{equation*}

The Sudakov and dual Sudakov inequalities (see~\cite[Chapter~4]{AGA-book}) state that if $K$ is a symmetric convex body in
$\mathbb{R}^n$, then for every $t > 0$,
\begin{equation*}
N(K, t B_2^n) \ls \exp\!\big( c n M^{\ast}(K)^2 / t^2 \big)
\quad \text{and} \quad
N(B_2^n, t K) \ls \exp\!\big( c n M(K)^2 / t^2 \big),
\end{equation*}
where $c > 0$ is an absolute constant.

\vspace{0.8em}
\noindent
V.~Milman proved in~\cite{VMilman-1986} that there exists an absolute constant $\beta > 0$ such that every centered convex body $A \subset \mathbb{R}^n$ admits a linear image $\tilde{A}$ satisfying $\vol_n(\tilde{A}) = \vol_n(B_2^n)$ and
\begin{equation}\label{betaMposition}
\max\Bigl\{
   N(\tilde{A}, B_2^n),
   N(B_2^n, \tilde{A}),
   N(\tilde{A}^{\circ}, B_2^n),
   N(B_2^n, \tilde{A}^{\circ})
   \Bigr\}
   \ls \exp(\beta n).
\end{equation}
A convex body $A$ satisfying this estimate is said to be in \emph{$M$-position} with constant~$\beta$.

Pisier~\cite{Pisier-1989} proposed a different approach, allowing one to construct an entire family of $M$-positions and to obtain
more precise quantitative information on the corresponding covering numbers.

\begin{theorem}[Pisier]\label{th:pisier-alpha-regular}
For every $0 < \alpha < 2$ and every symmetric convex body $A \subset \mathbb{R}^n$, there exists
a linear image $\tilde{A}$ of $A$ such that
\[
\max\Bigl\{
   N(\tilde{A}, t B_2^n),
   N(B_2^n, t \tilde{A}),
   N(\tilde{A}^{\circ}, t B_2^n),
   N(B_2^n, t \tilde{A}^{\circ})
   \Bigr\}
   \ls \exp\!\big( c(\alpha) n / t^{\alpha} \big)
\]
for every $t \gr c(\alpha)^{1/\alpha}$, where $c(\alpha)$ depends only on~$\alpha$ and satisfies
$c(\alpha) = O\!\big( (2 - \alpha)^{-\alpha/2} \big)$ as $\alpha \to 2^-$.
\end{theorem}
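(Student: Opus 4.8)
The plan is to use the $\ell$-position together with complex interpolation, exactly in the spirit of the original argument. I would put $A$ in the linear position that witnesses the Figiel--Tomczak-Jaegermann--Lewis estimate $M(A)M^{\ast}(A)\ls c\ln n$ (the ``$\ell$-position''), normalised so that $\vol_n(A)=\omega_n$, and prove that this position is already $\alpha$-regular, using interpolation only as a device for estimating covering numbers scale by scale. The starting observation is that Sudakov's inequality gives $N(K,tB_2^n)\ls\exp\!\big(cnM^{\ast}(K)^2/t^{2}\big)\ls\exp\!\big(cnM^{\ast}(K)^2/t^{\alpha}\big)$ for $t\gr 1$ (here $\alpha<2$ is used), and dually $N(B_2^n,tK)\ls\exp\!\big(cnM(K)^2/t^{\alpha}\big)$; so if a single linear image of $A$ made $M^{\ast}(A),M(A),M^{\ast}(A^{\circ}),M(A^{\circ})$ all $O(1)$ we would be done at once. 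This cannot be arranged in general, since $M(K)M^{\ast}(K)\gr 1$ and the product can be forced to be of order $\ln n$; interpolation is precisely the mechanism that trades powers of $t$ against this obstruction.

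Concretely, with $A$ in $\ell$-position (so, after the balanced normalisation, $M(A),M^{\ast}(A)\ls c\sqrt{\ln n}$), I would introduce for $\theta\in(0,1)$ the complex-interpolation body $A_\theta$ between $(\Complex^n,\snorm{\cdot}_A)$ and $(\Complex^n,|\cdot|)$, and record three facts: $(A_\theta)^{\circ}\approx(A^{\circ})_\theta$, i.e.\ the construction commutes with polarity up to absolute constants; a chain of inclusions relating $A$, $A_\theta$ and multiples of $B_2^n$ at each scale; and --- the crucial point --- that Gaussian averages are almost multiplicative along the interpolation parameter,
\[
M^{\ast}(A_\theta)\ls \big(K(X_A)\big)^{O(1)}\, M^{\ast}(A)^{1-\theta},\qquad M(A_\theta)\ls \big(K(X_A)\big)^{O(1)}\, M(A)^{1-\theta},
\]
where $K(X_A)$ is the $K$-convexity constant of the space with unit ball $A$, bounded by $c\ln\!\big(1+d_{\mathrm{BM}}(A,B_2^n)\big)\ls c\ln n$ via Pisier's $K$-convexity theorem. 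Feeding these bounds into Sudakov and dual Sudakov for $A_\theta$ and $(A_\theta)^{\circ}$, and then comparing $A$ (and $A^{\circ}$) with $A_{\theta(t)}$ (and $(A_{\theta(t)})^{\circ}$) at scale $t$, with $\theta(t)$ chosen so that the resulting exponent of $t$ is exactly $\alpha$, yields $\log N(A,tB_2^n)\ls c(\alpha)\,n\,t^{-\alpha}$ and its three companions. The powers of $\ln n$ are absorbed into $c(\alpha)$ because their total exponent is a fixed $O(1)$ while the gain in $t$ is unbounded; carrying out the comparison over dyadic scales produces a geometric-type series whose common ratio tends to $1$ as $\alpha\to 2^-$, which is the source both of the blow-up $c(\alpha)=O\!\big((2-\alpha)^{-\alpha/2}\big)$ and of the lower restriction $t\gr c(\alpha)^{1/\alpha}$.

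The main obstacle is the almost-multiplicativity estimate for Gaussian averages along the interpolation scale together with the $K$-convexity input: this is the analytic heart of the argument and is where essentially all the work lies. Two secondary points require care: extracting the sharp dependence $c(\alpha)=O\!\big((2-\alpha)^{-\alpha/2}\big)$, which forces one to track the constants uniformly in $\theta$ as $\theta\to 1$; and ensuring that the final estimate is genuinely for a linear image of $A$ rather than for the auxiliary interpolation bodies $A_\theta$ --- this is handled by the inclusion chain mentioned above, applied at each dyadic scale, or alternatively by a fixed-point argument that selects the regular position directly.
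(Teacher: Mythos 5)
The paper does not prove this theorem: it is stated in Section~\ref{section:4} as a cited background result, attributed to Pisier \cite{Pisier-1989}, and the paper simply applies it (and the closely related Proposition~\ref{prop:M-position}) without reproducing the argument. So there is no in-paper proof for me to compare you against, and I will instead assess your sketch on its own terms against Pisier's original approach.

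Your outline is broadly faithful to that approach. The genuine ingredients are all named: complex interpolation between the space with unit ball $A$ and Euclidean space, the fact that polarity commutes with interpolation up to constants, the interpolation inequality for Gaussian mean widths controlled through the $K$-convexity constant (bounded by $c\ln n$ via Pisier's $K$-convexity theorem), and the feeding of these bounds into Sudakov and dual Sudakov. You also correctly identify the two real obstacles: proving the almost-multiplicativity of $M^\ast$ along the interpolation scale, and the fact that a na\"ive ``choose $\theta(t)$ depending on $t$'' gives a different position at each scale, which must be repaired by a fixed-point argument selecting a single $\alpha$-regular linear image. One small imprecision: the normalization $\vol_n(A)=\omega_n$ does not by itself balance $M$ and $M^\ast$; you need to scale so that $M=M^\ast$, which then gives both of order $\sqrt{\ln n}$ from the $MM^\ast\ls c\ln n$ bound. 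That said, all the actual analytic work---the interpolation inequality for the $\ell$-norm with uniform constants as $\theta\to 1$, the precise extraction of $c(\alpha)=O\bigl((2-\alpha)^{-\alpha/2}\bigr)$, and the fixed-point construction---is deferred rather than carried out, so this reads as a correct roadmap of Pisier's argument rather than a proof.
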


A convex body $A$ satisfying the estimate in Theorem~\ref{th:pisier-alpha-regular} is said to be in \emph{$\alpha$-regular $M$-position} with constant~$c(\alpha)$.

\vspace{0.8em}
\noindent
The following proposition, a simple consequence of the $M^{\ast}$ estimate~\eqref{eq:EM} and the $M$ estimate~\eqref{eq:BK},
shows that every isotropic convex body $K \subset \mathbb{R}^n$ is (almost) in $2$-regular $M$-position.
Below, we set $r_n = \omega_n^{-1/n}$; note that $r_n \approx \sqrt{n}$.

\begin{proposition}\label{prop:M-position}
Let $K$ be an isotropic convex body in $\mathbb{R}^n$. Then, for every $t > 0$,
\begin{align}
\max\Bigl\{ N(K, t r_n B_2^n),\; N(B_2^n, t r_n K^{\circ}) \Bigr\}
   &\ls \exp\!\Big( \frac{\gamma_n^2 n}{t^2} \Big), \label{eq:M-position-a} \\
\max\Bigl\{ N(r_n B_2^n, t K),\; N(r_n K^{\circ}, t B_2^n) \Bigr\}
   &\ls \exp\!\Big( \frac{\delta_n^2 n}{t^2} \Big), \label{eq:M-position-b}
\end{align}
where $\gamma_n \ls c_1 (\ln n)^2$ and $\delta_n \ls c_2 \ln n$.
\end{proposition}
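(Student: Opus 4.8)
The plan is to combine the Sudakov and dual Sudakov inequalities with the isotropic $M^\ast$- and $M$-estimates \eqref{eq:EM} and \eqref{eq:BK}, after first reducing the non-symmetric body $K$ to a symmetric one. The key reduction is that all four quantities in the proposition are comparable, up to absolute constants in the exponent, to covering numbers involving the symmetric body $K_{\mathrm{in}} = K \cap (-K)$ (and its polar $(K_{\mathrm{in}})^\circ = (K^\circ)_{\mathrm{out}}$). Indeed, since $K$ is isotropic, $\mathrm{bar}(K) = 0$, so by Theorem~\ref{th:R-S} we have $K_{\mathrm{in}} \subseteq K$ and $\vol_n(K_{\mathrm{in}}) \gr 2^{-n}\vol_n(K) = 2^{-n}$; together with $K_{\mathrm{in}}\subseteq K \subseteq K_{\mathrm{out}}\subseteq$ (a bounded multiple of $K_{\mathrm{in}}$ in the sense of volume) this lets me pass between $K$ and $K_{\mathrm{in}}$ at the cost of a factor $2^n$ inside the exponential, i.e. an absolute constant added to $\gamma_n^2$ or $\delta_n^2$. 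Moreover $M^\ast(K_{\mathrm{in}}) \ls M^\ast(K)$ since $K_{\mathrm{in}}\subseteq K$, and dually $M(K_{\mathrm{in}}) = M^\ast((K_{\mathrm{in}})^\circ)$, with $M(K) \gr c\, M(K_{\mathrm{in}})$ needing a short argument using that $K$ and $-K$ have the same $M$ and $p_{K\cap(-K)} = \max\{p_K, p_{-K}\}$, so $M(K_{\mathrm{in}}) \ls M(K) + M(-K) = 2M(K)$.

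Concretely, first I would record that for the symmetric body $K_{\mathrm{in}}$ one has, by E.~Milman's estimate \eqref{eq:EM} applied to $K$ and monotonicity, $M^\ast(K_{\mathrm{in}}) \ls M^\ast(K) \ls c_1 \sqrt n (\ln n)^2$, and by the Bizeul--Klartag estimate \eqref{eq:BK} together with $M(K_{\mathrm{in}}) \ls 2M(K)$, $M(K_{\mathrm{in}}) \ls c_2 \frac{\log n}{\sqrt n}$. Then the Sudakov inequality gives $N(K_{\mathrm{in}}, tB_2^n) \ls \exp(c n M^\ast(K_{\mathrm{in}})^2/t^2)$, and since $r_n \approx \sqrt n$, replacing $t$ by $t r_n$ turns this into $\exp(c' n (\ln n)^4 / t^2)$, which is the form \eqref{eq:M-position-a} with $\gamma_n \ls c_1 (\ln n)^2$. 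Dually, the dual Sudakov inequality gives $N(B_2^n, t K_{\mathrm{in}}) \ls \exp(cnM(K_{\mathrm{in}})^2/t^2)$, and scaling $t \mapsto t r_n$ yields $\exp(c'n (\log n)^2/(n t^2)) = \exp(c'(\log n)^2/t^2)$ — but note this already has no factor of $n$, so to match \eqref{eq:M-position-b} I actually apply dual Sudakov at scale $t$ without the $r_n$ and then absorb: more carefully, $N(r_nB_2^n, tK_{\mathrm{in}}) = N(B_2^n, (t/r_n)K_{\mathrm{in}}) \ls \exp(c n M(K_{\mathrm{in}})^2 r_n^2/t^2) \ls \exp(c' n (\log n)^2 / t^2)$ since $r_n^2 \approx n$ and $M(K_{\mathrm{in}})^2 \approx (\log n)^2/n$, giving $\delta_n \ls c_2 \ln n$. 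The polar-body versions follow identically using $(K_{\mathrm{in}})^\circ$ in place of $K_{\mathrm{in}}$, since $M((K_{\mathrm{in}})^\circ) = M^\ast(K_{\mathrm{in}})$ and $M^\ast((K_{\mathrm{in}})^\circ) = M(K_{\mathrm{in}})$, and then one reverts from $K_{\mathrm{in}}$ and $(K_{\mathrm{in}})^\circ$ back to $K$ and $K^\circ$ using the volume comparison and a standard covering-number lemma (if $A \subseteq B$ and $\vol_n(B) \ls \rho^n \vol_n(A)$, then $N(B, tA)$ and $N(A, tB)$-type quantities differ from the corresponding ones for $A$ by at most a factor $(C\rho)^n$).

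The main obstacle I expect is handling the passage between $K$ and its symmetrization $K_{\mathrm{in}}$ cleanly, in both the "inner" covering numbers $N(K, \cdot)$ and the "outer" ones $N(\cdot, K)$ and $N(\cdot, K^\circ)$, without losing more than an absolute constant in the exponent — in particular verifying that $N(K, tr_n B_2^n) \ls N(K_{\mathrm{out}}, t r_n B_2^n)$ can be compared to $N(K_{\mathrm{in}}, \cdot)$, which uses $K_{\mathrm{out}} \supseteq K_{\mathrm{in}}$ in the wrong direction and so requires the Rogers--Shephard/Milman--Pajor volume bound $\vol_n(K_{\mathrm{out}}) \ls 2^n \vol_n(K) \ls 4^n \vol_n(K_{\mathrm{in}})$ plus the general covering lemma above. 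A secondary technical point is that \eqref{eq:EM} as stated in the excerpt already incorporates $K$ isotropic and non-symmetric, so I may cite it directly for $K$ and only need monotonicity for $K_{\mathrm{in}}$; likewise \eqref{eq:BK} is stated for isotropic $K$ without a symmetry hypothesis, so the only genuinely new little lemma is $M(K_{\mathrm{in}}) \ls 2 M(K)$. Once these comparisons are in place, the rest is just bookkeeping with the scaling $t \mapsto t r_n$ and the fact $r_n \approx \sqrt n$, and the two displayed estimates \eqref{eq:M-position-a}--\eqref{eq:M-position-b} drop out with $\gamma_n \ls c_1(\ln n)^2$ and $\delta_n \ls c_2 \ln n$.
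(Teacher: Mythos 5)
Your core ingredients are the right ones --- Sudakov and dual Sudakov applied to a symmetric replacement for $K$, combined with E.~Milman's $M^{\ast}$-estimate and Bizeul--Klartag's $M$-estimate, and you correctly isolate $M(K_{\rm in}) \ls 2M(K)$ as a lemma you will need. But the reduction you propose, which funnels all four covering numbers through the single symmetric body $K_{\rm in}$ and then ``reverts'' back to $K$ via a volume comparison, has a real gap, and you have in fact put your finger on it yourself in the last paragraph.

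The problem is the ``general covering lemma.'' For symmetric convex bodies $A \subseteq B$ with $\vol_n(B) \ls \rho^n \vol_n(A)$, the best one can say is $N(B, sC) \ls N(B, 2A)\,N(A, \tfrac{s}{2}C) \ls (2\rho)^n\, N(A, \tfrac{s}{2}C)$, so the multiplicative loss is genuinely exponential in $n$, not in $n/t^2$. When you feed this into \eqref{eq:M-position-a} you get
\[
N(K, t r_n B_2^n) \ls N(K_{\rm out}, t r_n B_2^n) \ls (2\rho)^n \exp\!\Big(\frac{4 c\, n\, M^{\ast}(K_{\rm in})^2}{(t r_n)^2}\Big),
\]
and the $(2\rho)^n$ term cannot be absorbed into $\exp(\gamma_n^2 n/t^2)$ once $t \gtrsim (\ln n)^2$. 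That regime is not vacuous: in the proof of Theorem~\ref{th:projections} one chooses $t = \gamma_n\sqrt{n/k}$, which for small $k$ is of order $(\ln n)^2\sqrt{n}$, so your bound would degrade from $e^k$ to $e^{cn}$ and the whole downstream argument would collapse.

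The way to avoid this (and the route the paper takes) is to never compare $K_{\rm out}$ and $K_{\rm in}$ at the covering-number level at all. Instead use both symmetrizations, each in the direction where the set inclusion already points the right way: for $N(K, t r_n B_2^n)$ and $N(B_2^n, t r_n K^{\circ})$ pass to $K_{\rm out}$ and $(K_{\rm out})^{\circ}$ (since $K \subseteq K_{\rm out}$ and $(K_{\rm out})^{\circ} \subseteq K^{\circ}$), and apply Sudakov and dual Sudakov there; for $N(r_n B_2^n, tK)$ and $N(r_n K^{\circ}, t B_2^n)$ pass to $K_{\rm in}$ and $(K_{\rm in})^{\circ}$ (since $K_{\rm in} \subseteq K$ and $K^{\circ} \subseteq (K_{\rm in})^{\circ}$). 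This requires the twin of the lemma you noticed, namely $M^{\ast}(K_{\rm out}) \ls M^{\ast}(K - K) = 2 M^{\ast}(K)$, which you did not record; monotonicity gives $M^{\ast}(K_{\rm in}) \ls M^{\ast}(K)$ but does not help with $K_{\rm out}$. With both $M(K_{\rm in}) \ls 2M(K)$ and $M^{\ast}(K_{\rm out}) \ls 2M^{\ast}(K)$ in hand, every step is a direct inclusion followed by Sudakov or dual Sudakov, with no Rogers--Shephard/Milman--Pajor volume ratios and no covering-number comparison lemma required, and the bound then holds for every $t > 0$ as claimed.
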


\begin{proof}
Let $K_{\rm in}$ and $K_{\rm out}$ denote the inner and outer regularizations of $K$. Observe that
\begin{align}
M(K_{\rm in})
   &= M^{\ast}\big( (K^{\circ})_{\rm out} \big)
   \ls M^{\ast}(K^{\circ} - K^{\circ})
   = 2 M^{\ast}(K^{\circ})
   = 2 M(K), \label{eq:M-1} \\
M^{\ast}(K_{\rm out})
   &\ls M^{\ast}(K - K)
   = 2 M^{\ast}(K). \label{eq:M-2}
\end{align}
Since $K_{\rm in} \subseteq K \subseteq K_{\rm out}$, it is clear that
$M(K_{\rm out}) \ls M(K)$ and $M^{\ast}(K_{\rm in}) \ls M^{\ast}(K)$.

The Sudakov inequality, combined with~\eqref{eq:M-2}, gives
\begin{align}\label{eq:M-3}
N(K, t r_n B_2^n)
   &\ls N(K_{\rm out}, t r_n B_2^n)
   \ls \exp\!\big( c n M^{\ast}(K_{\rm out})^2 / (r_n t)^2 \big) \\
\nonumber
   &\ls \exp\!\big( 4 c n M^{\ast}(K)^2 / (r_n t)^2 \big)
   \ls \exp\!\big( \gamma_n^2 n / t^2 \big).
\end{align}

Similarly, using~\eqref{eq:M-1} we obtain
\begin{align}\label{eq:M-4}
N(r_n K^{\circ}, t B_2^n)
   &\ls N(r_n (K_{\rm in})^{\circ}, t B_2^n)
   \ls \exp\!\big( c n r_n^2 M^{\ast}((K_{\rm in})^{\circ})^2 / t^2 \big) \\
\nonumber
   &= \exp\!\big( c n r_n^2 M(K_{\rm in})^2 / t^2 \big)
   \ls \exp\!\big( 4 c n r_n^2 M(K)^2 / t^2 \big)
   \ls \exp\!\big( \delta_n^2 n / t^2 \big).
\end{align}

Applying the dual Sudakov inequality and~\eqref{eq:M-1} yields
\begin{align}\label{eq:M-5}
N(r_n B_2^n, t K)
   &\ls N(r_n B_2^n, t K_{\rm in})
   \ls \exp\!\big( c n r_n^2 M(K_{\rm in})^2 / t^2 \big) \\
\nonumber
   &\ls \exp\!\big( 4 c n r_n^2 M(K)^2 / t^2 \big)
   \ls \exp\!\big( \delta_n^2 n / t^2 \big),
\end{align}
and, using~\eqref{eq:M-2}, we similarly have
\begin{align}\label{eq:M-6}
N(B_2^n, t r_n K^{\circ})
   &\ls N(B_2^n, t r_n (K_{\rm out})^{\circ})
   \ls \exp\!\big( c n M((K_{\rm out})^{\circ})^2 / (r_n t)^2 \big) \\
\nonumber
   &\ls \exp\!\big( c n M^{\ast}(K_{\rm out})^2 / (r_n t)^2 \big)
   = \exp\!\big( 4 c n M^{\ast}(K)^2 / (r_n t)^2 \big)
   \ls \exp\!\big( \gamma_n^2 n / t^2 \big).
\end{align}

Combining~\eqref{eq:M-3}--\eqref{eq:M-6} completes the proof.
\end{proof}

%%%%%%%%%%%%%%%%%%%%%%%%%%%%%%%%%%%%%%%%%%%%%%%%%%%%%%%%%%%%%%%%%%%%%%%%%%%%%%%%%%%%%%%%%%%%%%%%%%%%%%%%%%%%%%%%%%%%%%%%%%%
\section{Projections and sections of the outer and inner regularizations}\label{section:5}
%%%%%%%%%%%%%%%%%%%%%%%%%%%%%%%%%%%%%%%%%%%%%%%%%%%%%%%%%%%%%%%%%%%%%%%%%%%%%%%%%%%%%%%%%%%%%%%%%%%%%%%%%%%%%%%%%%%%%%%%%%%

In this section we compare the volumes of sections and projections of a convex body with those of its inner and outer 
regularizations. We begin with the proof of Theorem~\ref{th:projections}.

\begin{proof}[Proof of Theorem~$\ref{th:projections}$]
Assume first that ${\rm bar}(K) = 0$. Since $T(K_{\rm in}) = (TK)_{\rm in}$ and $T(K_{\rm out}) = (TK)_{\rm out}$ for every $T \in GL_n$, we may assume that $K$ is isotropic. 
From \eqref{eq:M-3} we know that $N_t = N(K_{\rm out}, t r_n B_2^n) \ls \exp(\gamma_n^2 n / t^2)$ for every $t > 0$. Thus, there exist
$x_1, \ldots, x_{N_t}$ such that 
\[
K_{\rm out} \subseteq \bigcup_{i=1}^{N_t} (x_i + t r_n B_2^n).
\]
Projecting onto a $k$-dimensional subspace $H$, we obtain
\[
P_H(K_{\rm out}) \subseteq \bigcup_{i=1}^{N_t} (P_H(x_i) + t r_n B_H),
\]
where $B_H$ denotes the Euclidean unit ball in $H$. Hence,
\[
\vol_k(P_H(K_{\rm out})) \ls N_t \, \vol_k(t r_n B_H) 
   \ls \exp(\gamma_n^2 n / t^2) (t r_n)^k \omega_k.
\]
Choosing $t = \gamma_n \sqrt{n/k}$ minimizes the right-hand side and gives
\[
\vol_k(P_H(K_{\rm out})) \ls e^k r_n^k \gamma_n^k (n/k)^{k/2} \omega_k,
\]
and therefore,
\begin{equation}\label{eq:P-1}
\vrad(P_H(K_{\rm out})) \ls e r_n \gamma_n \sqrt{n/k}.
\end{equation}

On the other hand, \eqref{eq:M-5} shows that 
$N'_t = N(r_n B_2^n, t K_{\rm in}) \ls \exp(\delta_n^2 n / t^2)$ for every $t > 0$. 
Proceeding in the same way, we obtain
\[
\vol_k(r_n B_H) \ls N'_t \, \vol_k(t P_H(K_{\rm in})) 
   \ls \exp(\delta_n^2 n / t^2) t^k \vol_k(P_H(K_{\rm in})).
\]
Choosing $t = \delta_n \sqrt{n/k}$, we get
\[
r_n^k \vol_k(B_H) \ls e^k \delta_n^k (n/k)^{k/2} \vol_k(P_H(K_{\rm in})),
\]
and hence,
\begin{equation}\label{eq:P-2}
r_n \ls e \delta_n \sqrt{n/k} \, \vrad(P_H(K_{\rm in})).
\end{equation}
Combining \eqref{eq:P-1} and \eqref{eq:P-2}, we find that
\begin{equation}\label{eq:P-b}
\vrad(P_H(K_{\rm out})) \ls e^2 \delta_n \gamma_n (n/k) \vrad(P_H(K_{\rm in})).
\end{equation}
The first claim of the theorem follows with $g(n) = e^2 \delta_n \gamma_n \ls c (\ln n)^3$.

Next, assume that ${\rm bar}(K^{\circ}) = 0$, in which case $s(K) = 0$. 
We may assume that $K^{\circ}$ is isotropic. 
Replacing $K$ by $K^{\circ}$ in \eqref{eq:M-4} and using \eqref{eq:duality}, we get
\[
N(r_n K_{\rm out}, t B_2^n) \ls \exp(\delta_n^2 n / t^2).
\]
Applying the argument of the first part of the proof, we obtain
\begin{equation}\label{eq:P-3}
r_n \vrad(P_H(K_{\rm out})) \ls e \delta_n \sqrt{n/k}.
\end{equation}
Replacing $K$ by $K^{\circ}$ in \eqref{eq:M-6} and using again \eqref{eq:duality}, we find that 
$N(B_2^n, t r_n K_{\rm in}) \ls \exp(\gamma_n^2 n / t^2)$, and by the same reasoning,
\begin{equation}\label{eq:P-4}
1 \ls e r_n \gamma_n \sqrt{n/k} \, \vrad(P_H(K_{\rm in})).
\end{equation}
Combining \eqref{eq:P-3} and \eqref{eq:P-4}, we obtain
\begin{equation}\label{eq:P-s}
\vrad(P_H(K_{\rm out})) \ls e^2 \delta_n \gamma_n (n/k) \vrad(P_H(K_{\rm in})),
\end{equation}
and the second claim of the theorem follows with $g(n) = e^2 \delta_n \gamma_n \ls c (\ln n)^3$.
\end{proof}

Theorem~\ref{th:sections} follows easily from Theorem~\ref{th:projections}.

\begin{proof}[Proof of Theorem~$\ref{th:sections}$]
Since $P_H((K^{\circ})_{\rm in})$ is symmetric and its polar body in $H$ is $K_{\rm out} \cap H$, 
the Blaschke--Santal\'{o} inequality implies
\begin{equation}\label{eq:S-1}
\vrad(K_{\rm out} \cap H) \, \vrad(P_H((K^{\circ})_{\rm in})) \ls 1.
\end{equation}
Similarly, since $P_H((K^{\circ})_{\rm out})$ is symmetric and its polar body in $H$ is $K_{\rm in} \cap H$, 
the Bourgain--Milman inequality gives
\begin{equation}\label{eq:S-2}
c_1 \ls \vrad(K_{\rm in} \cap H) \, \vrad(P_H((K^{\circ})_{\rm out})).
\end{equation}
Assuming that either ${\rm bar}(K) = 0$ or ${\rm bar}(K^{\circ})=0$, we may apply Theorem~\ref{th:projections} to $K^{\circ}$ and obtain
\[
\vrad(P_H((K^{\circ})_{\rm out})) \ls c_2 \frac{n g(n)}{k} \, \vrad(P_H((K^{\circ})_{\rm in})).
\]
Combining this estimate with \eqref{eq:S-1} and \eqref{eq:S-2}, we get
\begin{align*}
c_1 
&\ls \vrad(K_{\rm in} \cap H) \, \vrad(P_H((K^{\circ})_{\rm out})) \\
&\ls c_2 \frac{n g(n)}{k} \, \vrad(K_{\rm in} \cap H) \, \vrad(P_H((K^{\circ})_{\rm in})) \\
&\ls c_2 \frac{n g(n)}{k} \, \vrad(K_{\rm in} \cap H) \, \vrad(K_{\rm out} \cap H)^{-1}.
\end{align*}
Hence,
\[
\vrad(K_{\rm out} \cap H) \ls c_3 \frac{n g(n)}{k} \, \vrad(K_{\rm in} \cap H),
\]
where $c_3 = c_2 / c_1$. 
\end{proof}

We now turn to the proof of Theorem~\ref{th:BS-weak}.

\begin{proof}[Proof of Theorem~$\ref{th:BS-weak}$]
We have $\vrad(P_H(K)) \ls \vrad(P_H(K_{\rm out}))$, and by the Blaschke--Santal\'{o} inequality applied to $P_H(K_{\rm in})$ in $H$,
\[
\vrad(K^{\circ} \cap H) \ls \vrad((K_{\rm in})^{\circ} \cap H) \ls \vrad(P_H(K_{\rm in}))^{-1}.
\]
Therefore,
\[
\vrad(P_H(K)) \, \vrad(K^{\circ} \cap H)
   \ls \frac{\vrad(P_H(K_{\rm out}))}{\vrad(P_H(K_{\rm in}))}
   \ls \frac{n g(n)}{k},
\]
in view of Theorem~\ref{th:projections}.
\end{proof}

In the case where $K$ is isotropic, we can show that for a random $k$-dimensional subspace of $\mathbb{R}^n$, 
the volume radii of the corresponding sections or projections of $K_{\rm out}$ and $K_{\rm in}$ are of the same order, 
up to a logarithmic factor in the dimension.

\begin{proof}[Proof of Theorem~$\ref{th:random-subspace}$]
Let $K$ be an isotropic convex body in $\mathbb{R}^n$. 
Paouris and Pivovarov~\cite{Paouris-Pivovarov-2013} proved that for every symmetric convex body $C \subset \mathbb{R}^n$ 
and every $1 \ls k \ls n-1$,
\begin{equation}\label{eq:phi}
\Phi_k(C)
:= \vol_n(C)^{-1/n}
   \left(
     \int_{G_{n,k}} \vol_k(P_H(C))^{-n} \, d\nu_{n,k}(H)
   \right)^{-\frac{1}{kn}}
   \gr c_1 \sqrt{\frac{n}{k}},
\end{equation}
where $c_1 > 0$ is an absolute constant. 
Moreover, E.~Milman and Yehudayoff~\cite{EMilman-Yehudayoff} established the sharp lower bound 
$\Phi_k(B_2^n) \ls \Phi_k(C)$, together with a characterization of equality: 
ellipsoids are the only local minimizers with respect to the Hausdorff metric, for all $1 \ls k \ls n-1$.

From \eqref{eq:phi} and Markov’s inequality, it follows that
\[
{\rm vrad}(P_H(C)) \gr c_1 t^{-1} \sqrt{n} \, \vol_n(C)^{1/n}
\]
with probability greater than $1 - t^{-kn}$. Applying this result to $K_{\rm in}$ and taking into account the inequality of 
V.~Milman and Pajor, which ensures that $\vol_n(K_{\rm in})^{1/n} \gr \frac{1}{2}\vol_n(K)^{1/n} = \frac{1}{2}$, we obtain
\begin{equation}\label{eq:iso-proj-1}
{\rm vrad}(P_H(K_{\rm in})) \gr c_2 \sqrt{n}
\end{equation}
with probability greater than $1 - e^{-kn}$, where $c_2 > 0$ is an absolute constant.

Next, we use a well-known consequence of the Aleksandrov inequalities 
(see~\cite[Section~6.4]{Schneider-book}). 
For every convex body $C \subset \mathbb{R}^n$, the sequence
\begin{equation*}
Q_k(C)
= \left(
    \frac{1}{\omega_k}
    \int_{G_{n,k}} \vol_k(P_H(C)) \, d\nu_{n,k}(H)
  \right)^{1/k}
\end{equation*}
is decreasing in $k$. In particular, for any $1 \ls k \ls n-1$,
\[
Q_k(C) \ls Q_1(C),
\]
which can be written equivalently as
\begin{equation*}
\left(
  \frac{1}{\omega_k}
  \int_{G_{n,k}} \vol_k(P_H(C)) \, d\nu_{n,k}(H)
\right)^{1/k}
\ls M^{\ast}(C).
\end{equation*}
Applying this to $K_{\rm out}$ and using that 
$M^{\ast}(K_{\rm out}) \ls 2 M^{\ast}(K) \ls c \sqrt{n} (\ln n)^2$ 
by E.~Milman’s inequality, we obtain
\begin{equation}\label{eq:iso-proj-2}
{\rm vrad}(P_H(K_{\rm out})) \ls \gamma_n \sqrt{n}
\end{equation}
with probability greater than $1 - e^{-2k}$, where $\gamma_n \ls c_3 (\ln n)^2$. 
Combining \eqref{eq:iso-proj-1} and \eqref{eq:iso-proj-2}, we conclude that
\[
{\rm vrad}(P_H(K_{\rm out})) \ls \gamma_n \, {\rm vrad}(P_H(K_{\rm in}))
\]
with probability greater than $1 - e^{-k}$.

\medskip
\noindent
We now turn to the proof of \eqref{eq:iso-2}. 
Repeating the above reasoning for the polar body $K^{\circ}$, we obtain
\begin{equation}\label{eq:iso-proj-3}
{\rm vrad}(P_H((K^{\circ})_{\rm in})) 
  \gr c_2 \sqrt{n} \, \vol_n(K^{\circ})^{1/n}
\end{equation}
and
\begin{equation}\label{eq:iso-proj-4}
{\rm vrad}(P_H((K^{\circ})_{\rm out})) 
  \ls 2 M^{\ast}(K^{\circ})
  = 2 M(K)
  \ls \frac{\delta_n}{\sqrt{n}}
\end{equation}
for a random $H \in G_{n,k}$, where $\delta_n \ls c\,\ln n$. 
It follows that
\[
{\rm vrad}(P_H((K^{\circ})_{\rm out})) 
  \ls \delta_n \, {\rm vrad}(P_H((K^{\circ})_{\rm in}))
\]
with probability greater than $1 - e^{-k}$. 

Finally, repeating the proof of Theorem~\ref{th:sections}, 
and using the Blaschke--Santal\'{o} inequality for the symmetric convex bodies 
$K_{\rm out} \cap H$ and $K_{\rm in} \cap H$ 
(whose polars are $P_H((K^{\circ})_{\rm in})$ and $P_H((K^{\circ})_{\rm out})$, respectively), 
we obtain
\[
\vrad(K_{\rm out} \cap H) 
  \ls \delta_n \, \vrad(K_{\rm in} \cap H)
\]
for every $H \in G_{n,k}$ satisfying \eqref{eq:iso-proj-3} and \eqref{eq:iso-proj-4}.
\end{proof}

%%%%%%%%%%%%%%%%%%%%%%%%%%%%%%%%%%%%%%%%%%%%%%%%%%%%%%%%%%%%%%%%%%%%%%%%%%%%%%%%%%%%%%%%%%%%%%%%%%%%%%%%%%
\section{Functional inequalities}\label{section:6}
%%%%%%%%%%%%%%%%%%%%%%%%%%%%%%%%%%%%%%%%%%%%%%%%%%%%%%%%%%%%%%%%%%%%%%%%%%%%%%%%%%%%%%%%%%%%%%%%%%%%%%%%%%

We consider the class $\mathcal{L}^n$ of \emph{geometric log-concave integrable functions}:  
these are the centered log-concave functions $f:\mathbb{R}^n \to \mathbb{R}^+$ that satisfy
$f(0) = \|f\|_{\infty}$ and $0 < \int f < \infty$. We say that $f$ is centered if $\int xf(x)\,dx=0$. 
For any $f, g \in \mathcal{L}^n$ we define
\[
(f \star g)(x) = \sup\{ f(x_1) g(x_2) : x_1, x_2 \in \mathbb{R}^n,\ x = x_1 + x_2 \}.
\]
Roysdon~\cite{Roysdon-2020} obtained a functional version of Rudelson’s theorem on the sections of the difference body.  
He considered $\overline{f}(x) = f(-x)$ and defined $\Delta_0 f = f \star \overline{f}$, i.e.
\[
(\Delta_0 f)(x) = \sup\{ f(x_1) f(x_2) : x_1, x_2 \in \mathbb{R}^n,\ x = x_1 - x_2 \}.
\]
Note that if $f = \mathds{1}_K$ for some convex body $K \subset \mathbb{R}^n$, then 
$\Delta_0 f = \mathds{1}_{K - K}$.  
Thus, $\Delta_0 f$ may be viewed as a functional analogue of the difference body.  
With this definition, Roysdon’s inequality reads as follows.

\begin{theorem}[Roysdon]\label{th:Roysdon}
For any $f \in \mathcal{L}^n$, any $1 \ls k \ls n-1$, and any $H \in G_{n,k}$ we have
\begin{equation}\label{eq:Roysdon-1}
\left( \int_H \Delta_0 f(x) \, dx \right)^{1/k} 
  \ls C \max\!\left\{ \sqrt{k}, \frac{n}{k} \right\}
     \sup_{y \in \mathbb{R}^n}
     \left(
       \frac{1}{\|f|_{y+H}\|_{\infty}}
       \int_{y+H} f(x) \, dx
     \right)^{1/k},
\end{equation}
where $f|_{y+H}$ denotes the restriction of $f$ to $y+H$. Moreover,
\[
c \left( \int_H f(x) \, dx \right)^{1/k}
  \ls
  \left( \int_H \Delta_0 f(x) \, dx \right)^{1/k},
\]
where $c > 0$ and $C > 1$ are absolute constants.
\end{theorem}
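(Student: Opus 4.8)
The plan is to transfer the statement to the setting of convex bodies via K.~Ball's bodies and then invoke Rudelson's section inequality (Theorem~\ref{th:rudelson}). We may assume $f(0)=\|f\|_{\infty}=1$; then the second inequality is trivial, since choosing $x_1=x$, $x_2=0$ in the definition of $\Delta_0 f$ gives $\Delta_0 f(x)\gr f(x)f(0)=f(x)$, whence $\int_H\Delta_0 f\gr\int_H f$ and the claim holds with $c=1$. For the main inequality, recall that if $g\colon\mathbb{R}^m\to[0,\infty)$ is log-concave with $0<\int g<\infty$ and attains its maximum at a point $z_0$, then Ball's body $K_m(g)$ based at $z_0$ is convex and $\vol_m(K_m(g))=\|g\|_{\infty}^{-1}\int g$. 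Applying this to the restriction $f|_{y+H}$ (log-concave on the $k$-plane $y+H$) and to $\Delta_0 f|_H$ (log-concave on $H$, with maximum $\Delta_0 f(0)=1$ at $0$) turns the two sides of \eqref{eq:Roysdon-1} into
\[
\sup_{y\in\mathbb{R}^n}\Big(\tfrac{1}{\|f|_{y+H}\|_{\infty}}\int_{y+H}f\Big)^{1/k}=\sup_{y\in\mathbb{R}^n}\vol_k\big(K_k(f|_{y+H})\big)^{1/k},
\qquad
\Big(\int_H\Delta_0 f\Big)^{1/k}=\vol_k\big(K_k(\Delta_0 f|_H)\big)^{1/k}.
\]

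I would then realize these $k$-dimensional Ball bodies as (translated) sections of a single convex body $C\subset\mathbb{R}^n$ attached to $f$ and of its difference body, and apply Rudelson's inequality to $C$. The tools for this dictionary are: the volume identity above; the comparison $K_n(f)\cap H=K_n(f|_H)$ (the degree-$n$ Ball body of the $k$-dimensional function $f|_H$) together with the classical monotonicity of the family $\{K_p\}$, which gives $K_k(h)\subseteq K_n(h)\subseteq c\tfrac nk K_k(h)$ for log-concave $h$ on $\mathbb{R}^k$; the inclusion $K_m(f)-K_m(f)\subseteq K_m(f\star\overline{f})$; and the functional Rogers--Shephard inequality $\vol_m(K_m(f\star\overline{f}))\ls\binom{2m}{m}\vol_m(K_m(f))$. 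These express $\max_x\vol_k(C\cap(x+H))^{1/k}$ and $\vol_k((C-C)\cap H)^{1/k}$ in terms of $\sup_y\vol_k(K_k(f|_{y+H}))^{1/k}$ and $\vol_k(K_k(\Delta_0 f|_H))^{1/k}$ respectively, in each case up to absolute-constant losses and a common distortion of order $n/k$ coming from the monotonicity lemma. Rudelson's estimate
\[
\vol_k\big((C-C)\cap H\big)^{1/k}\ls c\min\Big\{\sqrt{k},\tfrac nk\Big\}\max_{x\in\mathbb{R}^n}\vol_k\big(C\cap(x+H)\big)^{1/k}
\]
then yields \eqref{eq:Roysdon-1}, the $n/k$-distortions cancelling in the ratio; no appeal to Fradelizi's inequality is needed because the right-hand side of \eqref{eq:Roysdon-1} is already a supremum over translates of $H$.

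The step I expect to be the main obstacle is making this dictionary quantitative with dimension-free constants. K.~Ball's bodies do not respect sections: $K_n(f)\cap H=K_k(f|_H)$ is false (the radial integrals carry weights $r^{n-1}$ versus $r^{k-1}$), and for translated sections $K_n(f)\cap(x+H)$ there is no clean correspondence with $f|_{x+H}$ at all, so one may have to replace $K_n(f)$ by a homogenization of $f$ to a convex body in $\mathbb{R}^{n+1}$ whose affine $H$-sections do encode the restrictions $f|_{y+H}$, and then verify that the $n/k$-distortions arising on the section side and on the difference-body side cancel rather than compound. A further subtlety is that the supremum defining $\Delta_0 f(x)$ ranges over all of $\mathbb{R}^n$ and not over $H$, so $\Delta_0 f|_H\neq\Delta_0(f|_H)$; this is handled by slicing, $\Delta_0 f|_H(x)=\sup_{z\in H^{\perp}}(g_z\star\overline{g_z})(x)$ with $g_z=f(\cdot+z)|_H$, and carrying the induced supremum of Ball bodies through the Rogers--Shephard comparison. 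Once these points are settled, the deduction from Theorem~\ref{th:rudelson} is routine, and the $\max\{\sqrt k,n/k\}$ appearing in \eqref{eq:Roysdon-1} is a safe upper bound that comfortably absorbs the accumulated losses.
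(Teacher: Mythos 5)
The paper does not prove Theorem~\ref{th:Roysdon}; it is stated with attribution to Roysdon~\cite{Roysdon-2020} and used, not proved. So there is no ``paper's own proof'' to compare your attempt against.

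Evaluating your proposal on its own terms: the lower bound is correct and does admit the trivial argument you give. Since $\Delta_0 f(x)=\sup\{f(x_1)f(x_2):x_1-x_2=x\}\gr f(x)f(0)=f(x)$ when $f(0)=\|f\|_\infty=1$, you get $c=1$ immediately. (This is specific to $\Delta_0$; the analogous inequality for $\Delta_{\mathrm{out}}f$ in Theorem~\ref{th:new-functional} is not trivial, because $\Delta_{\mathrm{out}}f(x)\gr\sqrt{f(2x)}$ and log-concavity forces $\sqrt{f(2x)}\ls f(x)$, so the paper has to route through Lemma~\ref{lem:6.3.2}, Lemma~\ref{lem:6.3.3} and Brunn--Minkowski.) Your upper-bound strategy---Ball bodies to transfer to convex bodies, then Rudelson's Theorem~\ref{th:rudelson}---is the natural route and presumably close in spirit to Roysdon's actual argument, and you have correctly flagged the two genuine obstacles: $K_p$ bodies of a restriction do not match sections of $K_q$ for $p\ne q$, and $\Delta_0 f|_H\ne\Delta_0(f|_H)$.

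Where the proposal has a real gap is the closing claim that the $n/k$-distortions ``cancel in the ratio'' and that $\max\{\sqrt{k},n/k\}$ ``comfortably absorbs the accumulated losses.'' This is asserted, not proved, and it is in fact doubtful. Rudelson's inequality has $\min\{\sqrt{k},n/k\}$, while Roysdon's functional version has $\max\{\sqrt{k},n/k\}$; if the transfer losses cancelled you would expect to recover $\min$, so the appearance of $\max$ is itself evidence that the transfer incurs a genuine loss. The paper's own functional transfer makes this concrete: in the proof of Theorem~\ref{th:new-functional}, passing from $K_{n+1}(f)$ back down to $K_k(f)$ via~\eqref{eq:inclusions-Kp} costs a multiplicative $cn/k$ that does not cancel against anything, which is precisely why that theorem's constant is $(n/k)^2g(n)$ rather than the $ng(n)/k$ of Theorem~\ref{th:sections}. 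If the same uncancelled factor of $n/k$ sits on top of Rudelson's $\min\{\sqrt{k},n/k\}$, one gets $n/\sqrt{k}$ when $k\ls n^{2/3}$, which strictly exceeds $\max\{\sqrt{k},n/k\}=n/k$ for $2\ls k\ls n^{2/3}$, and similarly $(n/k)^2>\sqrt{k}$ in the range $n^{2/3}<k<n^{4/5}$. So the bookkeeping as you describe it does not deliver~\eqref{eq:Roysdon-1}: you need to show how the $n/k$ losses are actually distributed, not merely assert that $\max$ is a generous bound.
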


\smallskip

For any $f \in \mathcal{L}^n$ we define the functions $\Delta_{\rm out} f$ and $\Delta_{\rm in} f$ by
\[
\Delta_{\rm out} f(x)
   = \sup\!\left\{ \sqrt{ f(x_1) f(-x_2) } :
      x = \tfrac{1}{2}(x_1 + x_2) \right\},
\qquad
\Delta_{\rm in} f(x)
   = \min\{ f(x), f(-x) \}.
\]
This definition is consistent with the notion of the \emph{$\alpha$-difference function} 
$\Delta_{\alpha} f$ of a function $f$, introduced by Colesanti in~\cite{Colesanti-2006} 
for any $f : \mathbb{R}^n \to [0, \infty]$ and any $\alpha \in [-\infty, 0]$. 
It is shown in \cite{Colesanti-2006} that if $f$ is $\alpha$-concave, then $\Delta_{\alpha} f$ is also $\alpha$-concave. 
Difference functions have been studied as functional analogues of the difference body in a number of works; 
see, e.g.,~\cite{Colesanti-2006, Klartag-2007, Bobkov-Colesanti-Fragala-2014}. 
One can easily check that the functions $\Delta_{\mathrm{out}} f$ and $\Delta_{\mathrm{in}} f$ 
considered here coincide with $\Delta_{0} f$ and $\Delta_{-\infty} f$, respectively. 
In particular, if $f \in \mathcal{L}^n$, then $\Delta_{\mathrm{out}} f$ is log-concave and 
$\Delta_{\mathrm{in}} f$ is quasi-concave.

Note that if $f = \mathds{1}_K$ for some convex body $K \subset \mathbb{R}^n$, then 
$\Delta_{\rm out} f = \mathds{1}_{\frac{1}{2}(K-K)}$ and 
$\Delta_{\rm in} f = \mathds{1}_{K \cap (-K)}$.  
Moreover, it is clear that $\Delta_{\rm in} f(x) \ls f(x)$ and 
$\Delta_{\rm in} f(x) \ls \Delta_{\rm out} f(x)$ for every $x \in \mathbb{R}^n$; 
for the latter inequality we choose $x_1 = x_2 = x$ and note that 
$\Delta_{\rm out} f(x) \gr \sqrt{f(x) f(-x)} \gr \min\{ f(x), f(-x) \}$.

\smallskip

We now establish the following functional version of Theorem~\ref{th:sections}.

\begin{theorem}\label{th:new-functional}
For any $f \in \mathcal{L}^n$, any $1 \ls k \ls n-1$, and any $H \in G_{n,k}$ we have
\begin{equation}\label{eq:new-functional-upper}
\left( \int_H \Delta_{\rm out} f(x) \, dx \right)^{1/k}
  \ls C (n/k)^2 g(n)
     \left( \int_H \Delta_{\rm in} f(x) \, dx \right)^{1/k}
\end{equation}
and
\begin{equation}\label{eq:new-functional-lower}
c \left( \int_H f(x) \, dx \right)^{1/k}
  \ls
  \left( \int_H \Delta_{\rm out} f(x) \, dx \right)^{1/k},
\end{equation}
where $c > 0$ and $C > 1$ are absolute constants.
\end{theorem}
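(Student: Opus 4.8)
The plan is to reduce the functional statement to its geometric counterpart (Theorem~\ref{th:sections}) via K.~Ball's bodies $K_p(f)$, exactly as announced in the introduction. Recall that for a log-concave $f:\mathbb{R}^n\to[0,\infty)$ with $f(0)>0$ and $p>0$, Ball's body is
\[
K_p(f)=\Big\{x\in\mathbb{R}^n : \int_0^\infty r^{p-1} f(rx)\,dr \gr \tfrac{1}{p} f(0)\Big\},
\]
which is convex, has $0\in\operatorname{int}(K_p(f))$, and for the relevant parameter $p=k$ relates $k$-dimensional integrals of $f$ over subspaces to volumes: for $H\in G_{n,k}$ one has, up to a multiplicative constant depending only on $\|f\|_\infty$ and a universal factor,
\[
\int_H f(x)\,dx \approx \|f\|_\infty\,\vol_k\big(K_k(f)\cap H\big).
\]
(This is the standard identity $\int_H f = k\,\|f\|_\infty \int_{K_k(f)\cap H}\!|x|^{?}\dots$ packaged via polar integration; I will cite \cite{Ball-1988} or \cite{Klartag-2007}.) First I would record the two key compatibility facts: (i) $K_k(\Delta_{\rm in} f)\approx K_k(f)\cap(-K_k(f)) = (K_k(f))_{\rm in}$, and (ii) $K_k(\Delta_{\rm out} f)\approx \tfrac12\big(K_k(f)-K_k(f)\big)$, hence $K_k(\Delta_{\rm out} f)\approx (K_k(f))_{\rm out}$ up to a factor $2$. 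Fact (i) is immediate from the definition of $K_p$ since $\Delta_{\rm in} f(x)=\min\{f(x),f(-x)\}$ forces the defining radial integral to be small unless both $x$ and $-x$ lie in $K_k(f)$; fact (ii) uses that $\Delta_{\rm out} f = \Delta_0 f$ is the sup-convolution square root, for which Ball-type bodies behave like difference bodies — this is precisely the computation in \cite{Colesanti-2006, Klartag-2007} showing $K_p(f\star\bar f)\supseteq \tfrac12(K_p(f)-K_p(f))$ with a matching reverse inclusion up to universal constants.

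Granting these, the proof of the upper bound \eqref{eq:new-functional-upper} proceeds as follows. Write $K=K_k(f)$ (centered, since $f\in\mathcal{L}^n$ is centered — here I use that the barycenter of $f$ being $0$ forces $\operatorname{bar}(K_k(f))$ to be close to $0$, or more cleanly, translate so that one of the hypotheses $\operatorname{bar}(K)=0$ holds, paying at most a bounded factor; this bounded translation is absorbed into the constant $C$, and it is the place where the extra $(n/k)$ compared to the clean Theorem~\ref{th:sections} — giving $(n/k)^2 g(n)$ rather than $(n/k)g(n)$ — enters, via Fradelizi's inequality Theorem~\ref{th:fradelizi} or the Rogers--Shephard section estimate Theorem~\ref{th:R-S-S} to relate arbitrary-center and barycenter sections). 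Then
\begin{align*}
\Big(\int_H \Delta_{\rm out} f\Big)^{1/k}
&\approx \|f\|_\infty^{1/k}\,\vrad\big(K_k(\Delta_{\rm out}f)\cap H\big)\\
&\lesssim \|f\|_\infty^{1/k}\,\vrad\big(K_{\rm out}\cap H\big)\\
&\lesssim \tfrac{n\,g(n)}{k}\,\|f\|_\infty^{1/k}\,\vrad\big(K_{\rm in}\cap H\big)\\
&\approx \tfrac{n\,g(n)}{k}\,\Big(\int_H \Delta_{\rm in} f\Big)^{1/k},
\end{align*}
where the third line is Theorem~\ref{th:sections} applied to the centered body $K$, and the $\vrad$-to-$\vol_k^{1/k}$ conversion only shifts by the universal constant $\omega_k^{1/k}\approx 1/\sqrt k$ on both sides, which cancels. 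Collecting the powers of $n/k$ from the Ball-body normalization and the barycentering step yields the stated $C(n/k)^2 g(n)$.

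For the lower bound \eqref{eq:new-functional-lower} I would argue directly, without any deep input: since $\Delta_{\rm out} f(x)\gr\sqrt{f(x)f(-x)}$ and, choosing $x_1=x_2=x$ in the sup, also $\Delta_{\rm out} f\gr \Delta_{\rm in} f$, the real content is that the $k$-dimensional integral of $\Delta_{\rm out} f$ over $H$ controls that of $f$ from below. This follows from the Ball-body picture together with the Rogers--Shephard lower bound $\vol_k(C)\le\vol_k\big(\tfrac12(C-C)\big)\cdot 2^k/\binom{2k}{k}$ applied to $C=K_k(f)\cap H$: we get $\vol_k(K_{\rm out}\cap H)\gtrsim \vol_k(K\cap H)$ hence (after the $\|f\|_\infty$ normalization) $\int_H\Delta_{\rm out} f \gtrsim c^k\int_H f$, which on taking $k$-th roots is \eqref{eq:new-functional-lower}. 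Alternatively this is literally the lower-bound half of Roysdon's Theorem~\ref{th:Roysdon}, once one identifies $\Delta_{\rm out} f$ with $\Delta_0 f$, so I may simply invoke that.

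The main obstacle I anticipate is making the identifications (i) and (ii) quantitatively clean — in particular controlling the barycenter/Santaló-point position of $K_k(f)$ so that Theorem~\ref{th:sections}, which requires $\operatorname{bar}(K)=0$ or $\operatorname{bar}(K^\circ)=0$, actually applies, and tracking honestly the powers of $n/k$ lost in (a) the Ball-body volume identity, (b) the $\tfrac12(K-K)$ versus $\operatorname{conv}(K,-K)$ discrepancy (harmless, a factor $2^k$ in volume, i.e. $2$ in $\vrad$), and (c) the centering correction. I expect (c) to be the genuinely delicate point: $f$ centered does not immediately give $K_k(f)$ centered, so one either invokes a Fradelizi-type stability estimate for Ball bodies or absorbs the shift at the cost of the extra factor $n/k$, which is exactly why the exponent in \eqref{eq:new-functional-upper} is $2$ rather than $1$.
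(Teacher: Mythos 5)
Your high-level strategy --- pass to Ball's bodies, identify $K_p(\Delta_{\rm in}f)$ and $K_p(\Delta_{\rm out}f)$ with the inner/outer regularizations of $K_p(f)$, and then invoke Theorem~\ref{th:sections} --- is exactly the paper's. The lower bound~\eqref{eq:new-functional-lower} you sketch (via the Rogers--Shephard inclusion, or by quoting the lower-bound half of Roysdon's Theorem~\ref{th:Roysdon}) also matches the paper's proof in substance.

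However, the centering step, which you yourself flag as ``the genuinely delicate point,'' is where the argument as written has a real gap. You want to apply Theorem~\ref{th:sections} to $K=K_k(f)$, but $f$ being centered does \emph{not} make $K_k(f)$ centered, and ``translate and absorb the shift into $C$'' is not a valid move here: translating $K_k(f)$ decouples it from $K_k(\Delta_{\rm out}f)$ and $K_k(\Delta_{\rm in}f)$, which are defined via symmetrization at the origin, so the identity $\int_H f = \vol_k(K_k(f)\cap H)$ and the comparisons $K_k(\Delta_{\rm out}f)\approx K_k(f)_{\rm out}$, $K_k(\Delta_{\rm in}f)\approx K_k(f)_{\rm in}$ are all anchored at $0$ and break under translation. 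Fradelizi's inequality compares the \emph{maximal} section with the barycentric one, not a central section with a barycentric one, so it does not salvage this. The clean mechanism the paper uses is different and worth internalizing: for centered $f$, the body $K_{n+1}(f)$ is \emph{exactly} centered (a classical fact of K.~Ball; this is not true for $K_k(f)$ when $k<n+1$). So one first pushes up from $K_k(\Delta_{\rm out}f)$ to $K_{n+1}(\Delta_{\rm out}f)$ using $K_k\subseteq K_{n+1}$, applies Lemma~\ref{lem:6.3.3} and Theorem~\ref{th:sections} to the genuinely centered body $K_{n+1}(f)$, and then comes back down from $K_{n+1}(f)_{\rm in}$ to $K_k(f)_{\rm in}$ using the $\Gamma$-ratio inclusion $K_{n+1}(f)\subseteq (cn/k)\,K_k(f)$ from~\eqref{eq:inclusions-Kp}. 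It is this last descent --- not any translation of $K_k(f)$ --- that costs the extra factor $n/k$ and yields $(n/k)^2 g(n)$.

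One further point: your assertion that fact (i), $K_k(\Delta_{\rm in}f)\approx K_k(f)_{\rm in}$, is ``immediate from the definition of $K_p$'' is too optimistic. The Ball body is defined by a one-dimensional integral, not by level sets, so the pointwise $\min$ in $\Delta_{\rm in}f$ does not directly transfer. The paper first establishes (Lemmas~\ref{lem:6.3.1}--\ref{lem:6.3.2}) the two-sided comparison $K_p(f)\approx R_p(f)$ between Ball bodies and level-set bodies; only at the level of $R_p$ does the identity $R_p(\Delta_{\rm in}f)=R_p(f)\cap R_p(\bar f)$ hold exactly, and this is what Lemma~\ref{lem:6.3.3} uses. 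So both of your identifications (i) and (ii) rest on the same nontrivial $K_p\approx R_p$ lemma, which your sketch treats as a black box.
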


\smallskip

For the proof we employ two families of convex bodies associated with each $f \in \mathcal{L}^n$.  
The first, introduced by K.~Ball~\cite{Ball-1988}, is given for every $p > 0$ by
\begin{equation*}
K_p(f)
  = \left\{ x \in \mathbb{R}^n :
      \int_0^{\infty} r^{p-1} f(r x) \, dr
      \gr \frac{f(0)}{p}
    \right\}.
\end{equation*}
From the definition it follows that the radial function of $K_p(f)$ satisfies
\begin{equation}\label{eq:radial-Kp}
\varrho_{K_p(f)}(x)
  = \left(
      \frac{1}{f(0)}
      \int_0^{\infty} p r^{p-1} f(r x) \, dr
    \right)^{1/p}
\quad \text{for } x \neq 0.
\end{equation}
Moreover, for every $0 < p < q$ one has
\begin{equation}\label{eq:inclusions-Kp}
\frac{\Gamma(p+1)^{1/p}}{\Gamma(q+1)^{1/q}} K_q(f)
  \subseteq K_p(f)
  \subseteq K_q(f).
\end{equation}
A proof of these inclusions is given in~\cite[Proposition~2.5.7]{BGVV-book}.  
We also consider the family $\{ R_p(f) \}_{p>1}$ defined by
\[
R_p(f) = \{ x \in \mathbb{R}^n : f(x) \gr e^{-(p-1)} f(0) \}.
\]
We will use several relations between these two families; their proofs are provided in 
Subsection~\ref{subsection:6.3}.  
Lemma~\ref{lem:6.3.2} shows that
\begin{equation}\label{eq:inclusions-K-R}
c_2 K_p(f) \subseteq R_p(f) \subseteq c_1 K_p(f)
\end{equation}
for all $p \gr 2$, where $c_1, c_2 > 0$ are absolute constants.  
Furthermore, Lemma~\ref{lem:6.3.3} states that
\begin{equation}\label{eq:inclusions-Delta-out-in}
K_p(\Delta_{\rm out} f)\approx K_p(f)_{{\rm out}}\quad\hbox{and}\quad 
K_p(\Delta_{\rm in} f)\approx K_p(f)_{{\rm in}}
\end{equation}
for all $p \gr 2$.

\smallskip

Since \eqref{eq:new-functional-upper} is homogeneous, we may assume that $f(0) = \|f\|_{\infty} = 1$.  
It is well known that for any $H \in G_{n,k}$,
\[
\int_H f(x) \, dx = \vol_k(K_k(f) \cap H).
\]
Indeed, integrating in polar coordinates gives
\begin{align*}
\vol_k(K_k(f) \cap H)
  &= \int_H \mathds{1}_{K_k(f)}(x) \, dx
   = \int_{S^{n-1} \cap H} \int_0^{\rho_{K_k(f)}(\xi)} r^{k-1} \, dr \, d\xi \\
  &= \frac{1}{k} \int_{S^{n-1} \cap H} \rho_{K_k(f)}(\xi)^k \, d\xi
   = \frac{1}{k} \int_{S^{n-1} \cap H}
       k \int_0^{\infty} f(r \xi) r^{k-1} \, dr \, d\xi \\
  &= \int_H f(z) \, dz.
\end{align*}

\smallskip

\begin{proof}[Proof of Theorem~$\ref{th:new-functional}$]
We begin with the convex body $K_k(\Delta_{\rm out} f)$.  
Note that $\Delta_{\rm out} f(0) = \|\Delta_{\rm out} f\|_{\infty} = \|f\|_{\infty} = 1$.  
We have
\[
\int_H \Delta_{\rm out} f(x) \, dx
  = \vol_k(K_k(\Delta_{\rm out} f) \cap H)
  \ls \vol_k(K_{n+1}(\Delta_{\rm out} f) \cap H),
\]
since $K_k(\Delta_{\rm out} f) \subseteq K_{n+1}(\Delta_{\rm out} f)$.  
By Lemma~\ref{lem:6.3.3},
\[
K_{n+1}(\Delta_{\rm out} f)
  \subseteq c\,K_{n+1}(f)_{{\rm out}}
\]
for some absolute constant $c > 0$.  
As $K_{n+1}(f)$ is centered, we deduce that
\begin{equation}\label{eq:new-functional-1}
\vol_k\big( K_{n+1}(f)_{{\rm out}} \cap H \big)
  \ls \left( \frac{n g(n)}{k} \right)^k
       \vol_k\big( K_{n+1}(f)_{{\rm in}} \cap H \big).
\end{equation}
Moreover,
\[
K_{n+1}(f)_{{\rm in}}
  \subseteq \frac{c n}{k} \, K_k(f)_{{\rm in}},
\]
and hence
\[
\vol_k(K_{n+1}(f)_{{\rm in}} \cap H)
  \ls \left( \frac{c n}{k} \right)^k
       \vol_k(K_k(f)_{{\rm in}} \cap H).
\]
Lemma~\ref{lem:6.3.3} implies that 
$K_k(\Delta_{\rm in} f) \approx K_k(f)_{{\rm in}}$, therefore
\[
\big( \vol_k(K_k(f)_{{\rm in}}\cap H) \big)^{1/k}
  \approx
  \big( \vol_k(K_k(\Delta_{\rm in} f) \cap H) \big)^{1/k}
  = \left( \int_H \Delta_{\rm in} f(x) \, dx \right)^{1/k}.
\]
Combining the above estimates gives
\[
\left( \int_H \Delta_{\rm out} f(x) \, dx \right)^{1/k}
  \ls (n/k)^2 g(n)
      \left( \int_H \Delta_{\rm in} f(x) \, dx \right)^{1/k},
\]
which completes the proof of~\eqref{eq:new-functional-upper}.

\smallskip
For~\eqref{eq:new-functional-lower}, observe first that
\[
K_k(f|_H) \subseteq c_1 (K_k(f) \cap H)
\]
for some absolute constant $c_1 > 0$.  
Indeed,
\begin{align*}
K_k(f|_H)
  &\subseteq c_2 R_k(f|_H)
   = c_2 \{ x \in H : f(x) \gr (f|_H)(0) e^{-(k-1)} \} \\
  &= c_2 \{ x \in H : f(x) \gr e^{-(k-1)} \}
   = c_2 \big( \{ x \in \mathbb{R}^n : f(x) \gr e^{-(k-1)} \} \cap H \big) \\
  &= c_2 (R_k(f) \cap H)
   \subseteq c_1 (K_k(f) \cap H),
\end{align*}
by~\eqref{eq:inclusions-K-R}.  
Then, combining Lemma~\ref{lem:6.3.3} with the Brunn--Minkowski inequality, we obtain
\begin{align}\label{eq:new-functional-2}
\vol_k(K_k(f|_H))
  &\ls c_1^k \vol_k(K_k(f) \cap H)
   \ls c_1^k 2^{-k} \vol_k(K_k(f)_{{\rm out}} \cap H) \\
\nonumber
  &\ls c_3^k \vol_k(K_k(\Delta_{\rm out} f) \cap H)
   = c_3^k \int_H \Delta_{\rm out} f(x) \, dx.
\end{align}
Thus,
\[
\int_H f(x) \, dx
  \ls c_3^k \int_H \Delta_{\rm out} f(x) \, dx,
\]
and taking $k$th roots completes the proof of~\eqref{eq:new-functional-lower}.
\end{proof}

Next, we establish a functional analogue of Theorem~\ref{th:projections}.

\begin{theorem}\label{th:functional-projections}
For any $f \in \mathcal{L}^n$, any $1 \ls k \ls n-1$, and any $H \in G_{n,k}$ we have
\begin{equation}\label{eq:functional-projections}
\left( \int_H P_H(\Delta_{\rm out} f)(x) \, dx \right)^{1/k}
  \ls C (n/k)^2 g(n)
     \left( \int_H P_H(\Delta_{\rm in} f)(x) \, dx \right)^{1/k},
\end{equation}
where $C > 0$ is an absolute constant.
\end{theorem}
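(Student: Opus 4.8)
The plan is to mirror the proof of Theorem~\ref{th:new-functional}, replacing central sections of the Ball bodies by their projections onto $H$. The key observation is that, just as $\int_H f(x)\,dx = \vol_k(K_k(f)\cap H)$, one has the dual identity
\[
\int_H P_H(f)(x)\,dx = \vol_k\bigl(P_H(K_k(f))\bigr),
\]
valid for any $f\in\mathcal{L}^n$ with $f(0)=\|f\|_\infty$. Indeed, $P_H(f)(z) = \sup\{f(y+z):y\in H^\perp\}$ is again a geometric log-concave function on $H$ (its value at $0$ equals $\|f\|_\infty$), and the radial function of $K_k(P_H f)$ at a direction $\xi\in S_H$ is
$\bigl(\frac{1}{f(0)}\int_0^\infty k r^{k-1}P_Hf(r\xi)\,dr\bigr)^{1/k}$. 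One checks that $K_k(P_H f)\cap H$ coincides (up to the polar-coordinate computation already displayed in the excerpt, carried out inside $H$) with $P_H(K_k(f))$; the point is that $P_H(K_k(f))$ is exactly the body in $H$ whose radial function in direction $\xi$ is $\max\{t>0: t\xi\in P_H(K_k(f))\} = \sup_{y\in H^\perp}\rho_{K_k(f)}(y+t\xi)$-type quantity, which matches the integral expression because $P_H$ commutes with the $\sup$ over $H^\perp$ inside the radial integral. I would state and prove this as a preliminary lemma.

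Granting that identity, the argument proceeds exactly as before. First normalize $f(0)=\|f\|_\infty=1$, so $\Delta_{\rm out}f(0)=\Delta_{\rm in}f(0)=1$ as well. Then
\[
\int_H P_H(\Delta_{\rm out}f)(x)\,dx = \vol_k\bigl(P_H(K_k(\Delta_{\rm out}f))\bigr)\ls \vol_k\bigl(P_H(K_{n+1}(\Delta_{\rm out}f))\bigr)
\]
using $K_k\subseteq K_{n+1}$ and monotonicity of $P_H$. By Lemma~\ref{lem:6.3.3}, $K_{n+1}(\Delta_{\rm out}f)\subseteq c\,K_{n+1}(f)_{\rm out}$, so $P_H(K_{n+1}(\Delta_{\rm out}f))\subseteq c\,P_H(K_{n+1}(f)_{\rm out})$. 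Since $K_{n+1}(f)$ is centered, Theorem~\ref{th:projections} applies to it and gives
\[
\vrad\bigl(P_H(K_{n+1}(f)_{\rm out})\bigr)\ls \frac{ng(n)}{k}\,\vrad\bigl(P_H(K_{n+1}(f)_{\rm in})\bigr).
\]
Then $K_{n+1}(f)_{\rm in}\subseteq \tfrac{cn}{k}K_k(f)_{\rm in}$ (from~\eqref{eq:inclusions-Kp}, using $\Gamma(n+2)^{1/(n+1)}/\Gamma(k+1)^{1/k}\approx n/k$, and the fact that intersecting with $-(\cdot)$ preserves inclusions), so $P_H(K_{n+1}(f)_{\rm in})\subseteq \tfrac{cn}{k}P_H(K_k(f)_{\rm in})$, and finally Lemma~\ref{lem:6.3.3} gives $K_k(f)_{\rm in}\approx K_k(\Delta_{\rm in}f)$, hence $P_H(K_k(f)_{\rm in})\approx P_H(K_k(\Delta_{\rm in}f))$ and $\vol_k(P_H(K_k(\Delta_{\rm in}f))) = \int_H P_H(\Delta_{\rm in}f)(x)\,dx$. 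Chaining the estimates (one factor $n/k$ from Theorem~\ref{th:projections}, one from the Ball-body shift, plus the $g(n)$) yields the claimed bound $C(n/k)^2 g(n)$ after taking $k$th roots, converting $\vrad$ back to $\vol_k^{1/k}$ via the $\omega_k^{1/k}$ factors which are absorbed into the absolute constant.

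The main obstacle I anticipate is the preliminary projection identity $\int_H P_H(f)(x)\,dx = \vol_k(P_H(K_k(f)))$, together with the compatibility $K_k(P_Hf)\cap H = P_H(K_k(f))$: one has to verify that the supremum over $H^\perp$ defining $P_H f$ interacts correctly with the one-dimensional radial integral in the definition of $K_k$, i.e.\ that projecting the body and then taking the Ball body within $H$ agrees with taking the Ball body of the projected function. This requires a short but careful argument — the cleanest route is probably to directly compute $\vol_k(P_H(K_k(f)))$ by a Fubini/polar-coordinates argument in $H$ using the support description of a projection, or alternatively to invoke the known fact (see~\cite{BGVV-book}) that $P_H(K_p(f))$ relates to $K_p$ of an appropriate marginal/projection of $f$. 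Once that identity is in hand, the rest is a routine transcription of the proof of Theorem~\ref{th:new-functional} with ``$\cap H$'' replaced by ``$P_H$'' throughout, since every inclusion used there is preserved by the projection map $P_H$.
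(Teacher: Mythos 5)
Your overall strategy is sound and ultimately lands on the same bound as the paper, but the pivotal relation you rely on,
\[
\int_H P_H(f)(x)\,dx = \vol_k\bigl(P_H(K_k(f))\bigr),
\]
equivalently $K_k(P_H f) = P_H(K_k(f))$, is \emph{not} an identity---it fails for general $f\in\mathcal{L}^n$. For $\xi\in S_H$ a direct computation (normalize $f(0)=1$) gives
\[
\rho_{K_k(P_H f)}(\xi)^k = k\int_0^{\infty} r^{k-1}\sup_{y\in H^{\perp}} f(r\xi + y)\,dr,
\qquad
\rho_{P_H(K_k(f))}(\xi)^k = \sup_{w\in H^{\perp}}\, k\int_0^{\infty} r^{k-1} f\bigl(r(\xi + w)\bigr)\,dr .
\]
In the first quantity the maximizing $y$ is free to vary with $r$; in the second the perturbation is constrained to a single ray $\{rw\}$. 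Interchanging $\sup$ and integral shows only $P_H(K_k(f))\subseteq K_k(P_H f)$, with equality precisely when the optimal $y=y(r)$ is linear in $r$ (e.g.\ Gaussian $f$) and strict inclusion in general. So the $\sup$ over $H^{\perp}$ does \emph{not} commute with the radial integral defining $K_k$. This is exactly why the paper's proof works at the level of the superlevel sets $R_p$, for which $R_p(P_H g) = P_H(R_p(g))$ \emph{is} an exact identity, and only afterwards converts to Ball bodies through Lemma~\ref{lem:6.3.2}.

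Your argument is nonetheless repairable: replace the claimed equality by the two-sided estimate
\[
c^k\,\vol_k\bigl(P_H(K_k(f))\bigr)\ \ls\ \int_H P_H(f)(x)\,dx\ \ls\ C^k\,\vol_k\bigl(P_H(K_k(f))\bigr),
\]
which follows (for $k\gr 2$, say) from the chain $\int_H P_H f\,dx=\vol_k(K_k(P_H f))$ (exact, by the polar-coordinate computation in $H$, since $P_H f\in\mathcal{L}^k$), then $K_k(P_H f)\approx R_k(P_H f)=P_H(R_k(f))\approx P_H(K_k(f))$, where the two $\approx$'s come from Lemma~\ref{lem:6.3.2} applied in $H$ and in $\mathbb{R}^n$ and the middle equality is the exact level-set commutation. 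With this corrected input, the rest of your chain of inclusions is correct, and after taking $k$-th roots the $\omega_k^{1/k}$ factors and the $c^k$ losses are absorbed into the absolute constant, yielding the stated $C(n/k)^2 g(n)$. Conceptually your route is a ``collapsed'' version of the paper's: the paper writes $\int_H P_H g\,dx=\int_1^{\infty} e^{-(p-1)}\vol_k(P_H R_p(g))\,dp$ and integrates over $p$, incurring a $(k/(n+1))^k$ loss on the $\Delta_{\mathrm{in}}$ side; in your argument the same $(n/k)$ factor appears explicitly as the inclusion $K_{n+1}(f)_{\mathrm{in}}\subseteq\tfrac{cn}{k}K_k(f)_{\mathrm{in}}$, so the two proofs agree in their accounting.
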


\begin{proof}
Recall that if $g : \mathbb{R}^n \to [0,\infty)$ is a nonnegative measurable function and $H \in G_{n,k}$,  
the orthogonal projection of $g$ onto $H$ is the function $P_H g : H \to [0,\infty)$ defined by
\[
(P_H g)(z) = \sup\{ g(y + z) : y \in H^{\perp} \}.
\]
It is straightforward to verify that
\[
R_p(P_H g) = P_H(R_p(g))
\quad \text{for every } p > 1.
\]
Moreover, if $g = \mathds{1}_K$ for some compact set $K \subset \mathbb{R}^n$, then $P_H g = \mathds{1}_{P_H(K)}$.

\smallskip

We shall also use the fact that if $g(0) = \|g\|_{\infty} = 1$, then
\begin{align*}
\| P_H g \|_1
   &= \int_H P_H g(x) \, dx
    = \int_0^1 \vol_k(\{ x \in H : P_H g(x) \gr t \}) \, dt \\
   &= \int_1^{\infty} e^{-(p-1)} \vol_k(\{ x \in H : P_H g(x) \gr e^{-(p-1)} \}) \, dp \\
   &= \int_1^{\infty} e^{-(p-1)} \vol_k(R_p(P_H g)) \, dp
    = \int_1^{\infty} e^{-(p-1)} \vol_k(P_H(R_p(g))) \, dp.
\end{align*}

\smallskip

Let $f \in \mathcal{L}^n$ with $f(0) = \|f\|_{\infty} = 1$.  
By the right-hand side inclusion of Lemma~\ref{lem:6.3.2} we have
\begin{align}\label{eq:proj-1}
\int_H P_H(\Delta_{\rm out} f)(x) \, dx
  &= \int_1^{\infty} e^{-(p-1)} \vol_k(P_H(R_p(\Delta_{\rm out} f))) \, dp \\
\nonumber
  &\ls c_1^k \int_1^{\infty} e^{-(p-1)}
         \vol_k(P_H(K_p(\Delta_{\rm out} f))) \, dp.
\end{align}
From~\eqref{eq:inclusions-Kp} we know that
$K_p(\Delta_{\rm out} f) \subseteq K_{n+1}(\Delta_{\rm out} f)$ for all $1 \ls p \ls n+1$,  
and we also know that $K_p(\Delta_{\rm out} f) \subseteq \frac{c p}{n+1} K_{n+1}(\Delta_{\rm out} f)$ for all $p > n+1$.  
Therefore,
\begin{align}\label{eq:proj-2}
\int_H P_H(\Delta_{\rm out} f)(x) \, dx
  &\ls c_1^k \Bigg(
      \int_1^{n+1} e^{-(p-1)}
         \vol_k(P_H(K_{n+1}(\Delta_{\rm out} f))) \, dp \\
\nonumber
  &\hspace{1cm}
     + \int_{n+1}^{\infty}
         \left( \frac{c_2 p}{n+1} \right)^k e^{-(p-1)}
         \vol_k(P_H(K_{n+1}(\Delta_{\rm out} f))) \, dp
     \Bigg) \\
\nonumber
  &= c_1^k \, \vol_k(P_H(K_{n+1}(\Delta_{\rm out} f))) 
     \left(
       \int_1^{n+1} e^{-(p-1)} \, dp
       + \int_{n+1}^{\infty}
          \left( \frac{c_2 p}{n+1} \right)^k e^{-(p-1)} \, dp
     \right) \\
\nonumber
  &\ls c_1^k \, \vol_k(P_H(K_{n+1}(\Delta_{\rm out} f))) 
     \left(
       \int_1^{\infty} e^{-(p-1)} \, dp
       + \frac{e \, c_2^k}{(n+1)^k} \int_0^{\infty} p^k e^{-p} \, dp
     \right) \\
\nonumber
  &\ls c_1^k \, \vol_k(P_H(K_{n+1}(\Delta_{\rm out} f))) 
     \left( 1 + \frac{e \, c_2^k k!}{(n+1)^k} \right)
   \ls c_3^k \, \vol_k(P_H(K_{n+1}(\Delta_{\rm out} f))).
\end{align}

On the other hand,
\begin{align}\label{eq:proj-3}
\int_H P_H(\Delta_{\rm in} f)(x) \, dx
  &= \int_1^{\infty} e^{-(p-1)}
       \vol_k(P_H(R_p(\Delta_{\rm in} f))) \, dp \\
\nonumber
  &\gr c_4^k \int_2^{\infty} e^{-(p-1)}
       \vol_k(P_H(K_p(\Delta_{\rm in} f))) \, dp \\
\nonumber
  &\gr c_4^k \int_{\max\{k,2\}}^{n+1} e^{-(p-1)}
       \left( \frac{c_5 k}{n+1} \right)^k
       \vol_k(P_H(K_{n+1}(\Delta_{\rm in} f))) \, dp \\
\nonumber
  &\gr \left( \frac{c_6 k}{n+1} \right)^k
       \vol_k(P_H(K_{n+1}(\Delta_{\rm in} f))).
\end{align}

Since $K_{n+1}(f)$ is centered, Theorem~\ref{th:projections} gives
\[
\vol_k(P_H(K_{n+1}(\Delta_{\rm out} f)))^{1/k}
  \ls c_7 (n/k) g(n)
     \vol_k(P_H(K_{n+1}(\Delta_{\rm in} f)))^{1/k}.
\]
Combining this estimate with~\eqref{eq:proj-2} and~\eqref{eq:proj-3} yields
\[
\int_H P_H(\Delta_{\rm out} f)(x) \, dx
  \ls \left( \frac{c_8 n \sqrt{g(n)}}{k} \right)^{2k}
       \int_H P_H(\Delta_{\rm in} f)(x) \, dx,
\]
or equivalently,
\[
\left( \int_H P_H(\Delta_{\rm out} f)(x) \, dx \right)^{1/k}
  \ls c_8 (n/k)^2 g(n)
     \left( \int_H P_H(\Delta_{\rm in} f)(x) \, dx \right)^{1/k}.
\]
This completes the proof.
\end{proof}

%%%%%%%%%%%%%%%%%%%%%%%%%%%%%%%%%%%%%%%%%%%%%%%%%%%%%%%%%%%%%%%%%%%%%%%%%%%%%%%%%%%%%%%%%%%%%%%%%%%%%%%%%
\subsection{Auxiliary lemmas}\label{subsection:6.3}
%%%%%%%%%%%%%%%%%%%%%%%%%%%%%%%%%%%%%%%%%%%%%%%%%%%%%%%%%%%%%%%%%%%%%%%%%%%%%%%%%%%%%%%%%%%%%%%%%%%%%%%%%

In this subsection we present variants of several technical lemmas from~\cite{Klartag-VMilman-2005}
(see also~\cite{Roysdon-2020}).

\begin{lemma}\label{lem:6.3.1}
Let $g : [0, +\infty) \to [0, +\infty)$ be an increasing convex function with $g(0) = 0$.
For any $p > 1$ define
\[
M_p = \sup\{ e^{-g(t)} t^{p-1} : t > 0 \},
\]
and let $t_p > 0$ denote the unique point satisfying $M_p = e^{-g(t_p)} t_p^{p-1}$.
Then,
\begin{equation}\label{eq:6.3.1-1}
\frac{M_p t_p}{p}
   \ls \int_0^{\infty} t^{p-1} e^{-g(t)} \, dt
   \ls c \, \frac{M_p t_p}{\sqrt{p-1}},
\end{equation}
where $c > 0$ is an absolute constant. Moreover,
\begin{equation}\label{eq:6.3.1-2}
g(2t_p) \gr p - 1 \gr g(t_p).
\end{equation}
\end{lemma}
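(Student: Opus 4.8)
The plan is to prove Lemma~\ref{lem:6.3.1} by analyzing the function $\varphi(t) = (p-1)\ln t - g(t)$ on $(0,\infty)$, whose maximum value is $\ln M_p$, attained at $t_p$. Since $g$ is convex and increasing with $g(0)=0$, the function $\varphi$ is strictly concave on the region where $t \mapsto (p-1)\ln t$ dominates, so the maximizer $t_p$ is unique and is characterized by the first-order condition $g'(t_p) = (p-1)/t_p$ (interpreting $g'$ as a one-sided derivative where necessary). I would first record this characterization, together with the elementary observation that $e^{-g(t)}t^{p-1} = M_p \exp(\varphi(t)-\varphi(t_p))$, so that the integral in \eqref{eq:6.3.1-1} equals $M_p \int_0^\infty \exp(\varphi(t)-\varphi(t_p))\,dt$.

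For the lower bound in \eqref{eq:6.3.1-1}, I would restrict the integral to $(0, t_p)$ and use concavity of $\varphi$: on this interval the chord from $0$ (where $\varphi \to -\infty$, but the substitution $t = t_p s$ is cleaner) — more precisely, substituting $t = t_p s$, the integrand becomes $t_p \exp(\psi(s))$ where $\psi(s) = (p-1)\ln s - (g(t_p s) - g(t_p))$, and by convexity of $g$ we have $g(t_p s) - g(t_p) \le 0$ for $s \le 1$ isn't quite right; instead $g(t_p s) \le s\, g(t_p)$ fails too. The clean route: on $(0,t_p)$, write $t^{p-1}e^{-g(t)} \ge t^{p-1} e^{-g(t_p)}$ is false since $g$ increasing. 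So instead I integrate $t^{p-1}e^{-g(t)} \ge$ the tangent-line bound: since $g$ is convex, $g(t) \le g(t_p) + (stuff)$ only above $t_p$. The correct elementary bound for the lower estimate is to drop to $\int_0^{t_p} t^{p-1}e^{-g(t)}dt \ge e^{-g(t_p)}\int_0^{t_p} t^{p-1}dt$ is wrong direction again. Let me instead use: $\int_0^\infty t^{p-1}e^{-g(t)}dt \ge \int_0^{t_p} t^{p-1}e^{-g(t_p)} dt$? No. The standard trick here is the substitution and Laplace-type comparison; I will use that for $t\in(0,t_p)$, convexity gives $g(t) - g(t_p) \le g'(t_p)(t-t_p) = \frac{p-1}{t_p}(t - t_p)$, hence $t^{p-1}e^{-g(t)} \ge t^{p-1}e^{-g(t_p)}e^{-\frac{p-1}{t_p}(t-t_p)}= M_p t_p^{-(p-1)} t^{p-1} e^{(p-1)(1 - t/t_p)}$, and $\int_0^{t_p} (t/t_p)^{p-1} e^{(p-1)(1-t/t_p)} \frac{dt}{t_p}\cdot t_p = t_p \int_0^1 u^{p-1}e^{(p-1)(1-u)}du \ge t_p/p$ after a direct estimate (the integrand at $u$ near $1$ behaves like $1$, and a crude bound $u^{p-1}e^{(p-1)(1-u)} \ge u^{p-1}$ gives exactly $\frac{t_p}{p}$). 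For the upper bound, I split $(0,\infty) = (0,t_p)\cup(t_p,\infty)$; on $(t_p,\infty)$ convexity of $g$ gives $g(t) \ge g(t_p) + \frac{p-1}{t_p}(t-t_p)$, so the tail integral is at most $M_p t_p \int_1^\infty u^{p-1}e^{-(p-1)(u-1)}du$, and on $(0,t_p)$ I use $g\ge 0$ together with... actually the cleanest uniform approach on both halves is the substitution $t = t_p u$ reducing everything to $M_p t_p \int_0^\infty \eta(u)\,du$ with $\eta(u) = u^{p-1}\exp(-(g(t_p u) - g(t_p)))$, where convexity yields $g(t_p u) - g(t_p) \ge \frac{p-1}{t_p}\cdot t_p(u-1) = (p-1)(u-1)$ for \emph{all} $u>0$ (tangent line lies below a convex function), so $\eta(u) \le u^{p-1}e^{-(p-1)(u-1)} = e^{p-1}\big(u e^{-u}\big)^{p-1}$; then $\int_0^\infty (ue^{-u})^{p-1}du$ is a Gamma-type integral concentrated near $u=1$, and a Laplace estimate gives $\int_0^\infty e^{(p-1)(\ln u - u)}du \le \frac{c}{\sqrt{p-1}} e^{-(p-1)}$, yielding the factor $c/\sqrt{p-1}$. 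The main obstacle is making this last Laplace estimate rigorous with an absolute constant valid for all $p>1$ (not just large $p$): I would handle $p-1 \ge 1$ by the second-order Taylor expansion of $\ln u - u$ at $u=1$ (where the second derivative is $-1$), bounding $\ln u - u \le -1 - c_0(u-1)^2$ on a neighborhood and controlling the tails crudely, and handle $1 < p-1 < 1$... i.e. $1<p<2$, by the trivial bound $\int_0^\infty (ue^{-u})^{p-1}du \le \int_0^\infty ue^{-u}du \cdot \sup(\cdots)$ or directly noting $\sqrt{p-1}$ is bounded below only near... actually for $p$ near $1$ the integral $\int_0^\infty (ue^{-u})^{p-1}du$ diverges as $p\to 1^+$ like $\frac{1}{p-1}$, matching $\frac{1}{\sqrt{p-1}}\cdot\frac{1}{\sqrt{p-1}}$, so I must be slightly more careful — but in fact $\frac{M_p t_p}{\sqrt{p-1}}$ still dominates since the true integral is $\frac{M_p t_p}{p-1}\cdot(\text{bounded})$ only when $g$ is exactly linear, and the worst case; a clean way out is to prove the upper bound $\int_0^\infty t^{p-1}e^{-g(t)}dt \le \frac{M_p t_p}{\sqrt{p-1}}\cdot c$ only for $p \ge 2$ and note that for $1<p<2$ one can instead bound via monotonicity from the $p=2$ case or simply absorb into the constant since $\sqrt{p-1}$ is then comparable to a constant times itself — I would state the estimate as written and verify the constant is absolute by the two-regime argument above.

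For \eqref{eq:6.3.1-2}, the right inequality $p - 1 \ge g(t_p)$ is immediate: actually one needs $g(t_p) \le p-1$; this follows because $\ln M_p = (p-1)\ln t_p - g(t_p) \ge \varphi(1) = -g(1)$ isn't directly it. Instead: by the first-order condition $g'(t_p) = (p-1)/t_p$ and convexity, $g(t_p) = g(t_p) - g(0) \le g'(t_p) t_p = p-1$ (a convex function with $g(0)=0$ satisfies $g(t) \le g'(t)\,t$). For the left inequality $g(2t_p) \ge p-1$: by convexity, $g(2t_p) \ge g(t_p) + g'(t_p)(2t_p - t_p) = g(t_p) + (p-1) \ge p-1$ since $g(t_p)\ge 0$. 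I would present \eqref{eq:6.3.1-2} first, as it is short and only uses the supporting-line inequalities for convex functions through the origin, then turn to the integral estimates \eqref{eq:6.3.1-1}. The only genuinely delicate point, as noted, is the absolute constant in the upper bound of \eqref{eq:6.3.1-1}; everything else is a direct application of the tangent-line inequality $g(t) \ge g(t_p) + \frac{p-1}{t_p}(t - t_p)$ together with elementary one-variable calculus.
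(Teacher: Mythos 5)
Your lower-bound argument contains a genuine error of direction. You reject the inequality $t^{p-1}e^{-g(t)} \gr t^{p-1}e^{-g(t_p)}$ on $(0,t_p)$ as ``false since $g$ increasing,'' but it is in fact \emph{true} precisely because $g$ is increasing: for $t\ls t_p$ one has $g(t)\ls g(t_p)$, hence $e^{-g(t)}\gr e^{-g(t_p)}$. This is exactly the paper's one-line proof of the lower bound, $\int_0^\infty t^{p-1}e^{-g(t)}\,dt \gr e^{-g(t_p)}\int_0^{t_p}t^{p-1}\,dt = M_pt_p/p$. Having dismissed the correct route, you then invoke ``convexity gives $g(t)-g(t_p)\ls g'(t_p)(t-t_p)$ for $t\in(0,t_p)$,'' which is backwards: for a convex function the tangent line at $t_p$ lies \emph{below} the graph, so $g(t)\gr g(t_p)+g'(t_p)(t-t_p)$ for all $t$, and therefore $e^{-g(t)}\ls e^{-g(t_p)}e^{-g'(t_p)(t-t_p)}$ on $(0,t_p)$, not $\gr$. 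Your subsequent chain thus establishes nothing, and the fact that the final crude estimate $\int_0^1 u^{p-1}e^{(p-1)(1-u)}\,du\gr\int_0^1 u^{p-1}\,du = 1/p$ happens to match the target $1/p$ is only because dropping the exponential factor silently reverts you to the monotonicity bound you rejected at the start. The fix is simply to use monotonicity on $(0,t_p)$ and stop.

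The rest of your plan is sound and coincides with the paper's proof. For the upper bound, you correctly lower-bound $g$ by its tangent line at $t_p$ (now in the direction that matters, $g(t)\gr g(t_p)+\frac{p-1}{t_p}(t-t_p)$ for all $t>0$), reduce to the Gamma integral, and observe via Stirling that $e^{p-1}\Gamma(p)/(p-1)^{p-1/2}$ is bounded for $p$ bounded away from $1$. Your worry about $p\to 1^+$ is legitimate: the quantity behaves like $(p-1)^{-1/2}$ there, so the claimed constant $c$ is not absolute over all of $(1,\infty)$; the paper's own ``$\approx$'' is also only a large-$p$ Stirling approximation. Since Lemma~\ref{lem:6.3.2} only invokes this lemma for $p\gr 2$, the natural resolution is exactly the one you suggest: state the upper bound for $p\gr 2$ (or note $c$ may depend on a lower threshold for $p$). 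Your treatment of \eqref{eq:6.3.1-2} via the supporting-line inequalities $g(t_p)\ls g'(t_p)t_p=p-1$ and $g(2t_p)\gr g(t_p)+g'(t_p)t_p\gr p-1$ is correct and is the paper's argument.
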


\begin{proof}
Since $g$ is increasing, we have
\[
\int_0^{\infty} t^{p-1} e^{-g(t)} \, dt
   \gr \int_0^{t_p} t^{p-1} e^{-g(t)} \, dt
   \gr e^{-g(t_p)} \int_0^{t_p} t^{p-1} \, dt
   = \frac{e^{-g(t_p)} t_p^p}{p}
   = \frac{M_p t_p}{p}.
\]

For the upper bound, set $\varphi(t) = g(t) - (p-1) \ln t$.  
Note that $\varphi$ is convex and has a unique critical point at $t_p$. Since $\varphi'_{-}(t_p) \ls 0 \ls \varphi'_{+}(t_p)$, it follows that
\[
g(t) \gr g(t_p) + \frac{p-1}{t_p} (t - t_p)
   \quad \text{for all } t > 0.
\]
Hence,
\begin{align*}
\int_0^{\infty} t^{p-1} e^{-g(t)} \, dt
   &\ls e^{(p-1) - g(t_p)} \int_0^{\infty} t^{p-1} e^{-\frac{(p-1)t}{t_p}} \, dt \\
   &= e^{(p-1) - g(t_p)} \left( \frac{t_p}{p-1} \right)^p \int_0^{\infty} t^{p-1} e^{-t} \, dt \\
   &= e^{-g(t_p)} t_p^{p-1}
      \frac{e^{p-1} (p-1)!}{(p-1)^{p-1}}
      \frac{t_p}{p-1}
      \approx M_p \frac{t_p}{\sqrt{p-1}}.
\end{align*}

Finally, if $0 < t < t_p$ then $g'_+(t) \ls (p-1)/t_p$, which implies
\[
g(t_p) \ls g(0) + \int_0^{t_p} \frac{p-1}{t_p} \, dt = p - 1,
\]
while
\[
g(2t_p) \gr g(t_p) + \frac{p-1}{t_p}(2t_p - t_p) \gr p - 1.
\]
This completes the proof.
\end{proof}

\begin{lemma}\label{lem:6.3.2}
Let $f \in \mathcal{L}^n$ with $f(0)=\|f\|_{\infty}=1$. For any $p \gr 2$ we have
\[
c_1 K_p(f) \subseteq R_p(f) \subseteq c_2 K_p(f),
\]
where $c_1, c_2 > 0$ are absolute constants. The right-hand side inclusion holds for all $p>1$.
\end{lemma}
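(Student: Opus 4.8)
The plan is to relate the radial function of $K_p(f)$ to the level sets $R_p(f)$ via Lemma~\ref{lem:6.3.1}, applied along each ray. Fix a direction $\xi \in S^{n-1}$ and consider the one-dimensional function $t \mapsto f(t\xi)$. Since $f \in \mathcal{L}^n$ with $f(0) = \|f\|_\infty = 1$, we may write $f(t\xi) = e^{-g_\xi(t)}$ for $t \ge 0$, where $g_\xi : [0,\infty) \to [0,\infty]$ is convex, increasing, and satisfies $g_\xi(0) = 0$. (Monotonicity follows because $f$ is log-concave with its maximum at the origin; if $g_\xi \equiv 0$ on a half-line the function is constant there and the argument degenerates harmlessly, so I may assume $g_\xi$ is eventually finite and strictly increasing, truncating as needed.) Then $M_p = \sup_{t>0} e^{-g_\xi(t)} t^{p-1}$ and the maximizing point $t_p = t_p(\xi)$ are exactly the quantities in Lemma~\ref{lem:6.3.1}.

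Next I would read off the two relevant radii. From~\eqref{eq:radial-Kp}, $\varrho_{K_p(f)}(\xi)^p = \frac{1}{f(0)}\int_0^\infty p\, r^{p-1} f(r\xi)\, dr = p \int_0^\infty r^{p-1} e^{-g_\xi(r)}\, dr$, so by the two-sided bound~\eqref{eq:6.3.1-1} we get $M_p t_p \le \varrho_{K_p(f)}(\xi)^p \le c\, p\, M_p t_p / \sqrt{p-1} \approx \sqrt{p}\, M_p t_p$, i.e. $(M_p t_p)^{1/p} \le \varrho_{K_p(f)}(\xi) \le (c p / \sqrt{p-1})^{1/p} (M_p t_p)^{1/p}$, and since $(c p/\sqrt{p-1})^{1/p}$ is bounded above and below by absolute constants for $p \ge 2$, we conclude $\varrho_{K_p(f)}(\xi) \approx (M_p t_p)^{1/p}$. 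For the body $R_p(f)$, its radial function in direction $\xi$ is $\varrho_{R_p(f)}(\xi) = \sup\{t \ge 0 : g_\xi(t) \le p-1\}$. The inequality~\eqref{eq:6.3.1-2}, namely $g_\xi(t_p) \le p-1 \le g_\xi(2t_p)$, immediately gives $t_p \le \varrho_{R_p(f)}(\xi) \le 2 t_p$, so $\varrho_{R_p(f)}(\xi) \approx t_p$.

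It then remains to compare $(M_p t_p)^{1/p}$ with $t_p$, i.e. to show $M_p^{1/p} \approx 1$ for $p \ge 2$ (with absolute constants). For the lower bound on $M_p$: evaluating the sup at $t = t_p$ and using $g_\xi(t_p) \le p-1$ gives $M_p = e^{-g_\xi(t_p)} t_p^{p-1} \ge e^{-(p-1)} t_p^{p-1}$; I should also extract a clean bound of the form $M_p t_p \ge c^p t_p^p$, which follows similarly, so $(M_p t_p)^{1/p} \ge c\, t_p$, giving $R_p(f) \subseteq c_2 K_p(f)$ after inverting the radial comparison. For the upper bound on $M_p$: I want $M_p^{1/p} \le C$, equivalently $e^{-g_\xi(t)} t^{p-1} \le C^p$ for all $t$. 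This is where I expect the main obstacle. One clean way: since $f(0) = \|f\|_\infty = 1$ and $f$ is log-concave, the one-dimensional marginal-type estimate bounds $\int_0^\infty r^{p-1} f(r\xi)\, dr$ — but I already used that for $K_p$. More directly, from the definition $\int_0^\infty r^{p-1} f(r\xi)\, dr \ge f(0)/p$ would be automatic at the boundary of $K_p$; rather, I should bound $M_p$ above by noting that convexity of $\varphi(t) = g_\xi(t) - (p-1)\ln t$ pins $t_p$, and then $M_p = e^{-g_\xi(t_p)} t_p^{p-1}$ with $g_\xi(2t_p) \ge p-1$ combined with convexity $g_\xi(2t_p) \le 2 g_\xi(t_p) + $ (correction) — cleaner is to use that $\log f$ is concave with $\log f(0) = 0$, hence $g_\xi(t)/t$ is nondecreasing, so from $g_\xi(2t_p) \ge p-1$ we get $g_\xi(t) \ge (p-1) t/(2t_p)$ for $t \ge 2t_p$, which forces $e^{-g_\xi(t)} t^{p-1}$ to decay and localizes the sup near $t_p$, yielding $M_p \le e^{p-1} t_p^{p-1}$ up to an absolute constant via a short estimate; hence $(M_p t_p)^{1/p} \le C t_p$. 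Combining the comparisons $\varrho_{K_p(f)}(\xi) \approx (M_p t_p)^{1/p} \approx t_p \approx \varrho_{R_p(f)}(\xi)$ with absolute constants uniform in $\xi$ and in $p \ge 2$ proves both inclusions $c_1 K_p(f) \subseteq R_p(f) \subseteq c_2 K_p(f)$; and since the lower bound in~\eqref{eq:6.3.1-1} and the estimate $g_\xi(t_p) \le p-1$ only require $p > 1$, the right-hand inclusion $R_p(f) \subseteq c_2 K_p(f)$ extends to all $p > 1$. Finally, I would note the edge case where $g_\xi \equiv 0$ on all of $[0,\infty)$ is excluded by integrability of $f$ (so $t_p < \infty$), and where $f(t\xi) = 0$ past some point the function $g_\xi$ takes the value $+\infty$ and all quantities remain well-defined.
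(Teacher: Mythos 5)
Your proposal follows essentially the same route as the paper: apply Lemma~\ref{lem:6.3.1} ray by ray to get $\varrho_{K_p(f)}(\xi)\approx (M_pt_p)^{1/p}$ and $\varrho_{R_p(f)}(\xi)\approx t_p$, and then compare $(M_pt_p)^{1/p}$ with $t_p$. One slip: you reformulate the remaining task as ``show $M_p^{1/p}\approx 1$,'' but since $(M_pt_p)^{1/p}=M_p^{1/p}t_p^{1/p}$, what you actually need is $M_p\approx t_p^{p-1}$, i.e.\ $e^{-g_\xi(t_p)/p}\approx 1$. The lower bound $M_p\gr e^{-(p-1)}t_p^{p-1}$ you obtain correctly from $g_\xi(t_p)\ls p-1$. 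For the upper bound, the detour through concavity of $\log f$ and monotonicity of $g_\xi(t)/t$ is unnecessary (and the intermediate statement that the goal is ``$e^{-g_\xi(t)}t^{p-1}\ls C^p$ for all $t$'' is false for large $t_p$): the paper simply observes that $M_p t_p=f(t_p\xi)\,t_p^{p}\ls t_p^{p}$ because $\|f\|_\infty=1$, which immediately gives $(M_pt_p)^{1/p}\ls t_p$. With that correction, your argument matches the paper's proof, including the observation that the right-hand inclusion uses only the lower bound in~\eqref{eq:6.3.1-1} and $g_\xi(t_p)\ls p-1$ and hence holds for all $p>1$.
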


\begin{proof}
Fix $\xi \in S^{n-1}$ and consider the convex function $g(t) = -\ln f(t\xi)$.
Let $M_p = \sup\{ e^{-g(t)} t^{p-1} : t > 0 \}$, and suppose that
$e^{-g(t_p)} t_p^{p-1} = f(t_p) t_p^{p-1} = M_p$.
Then, by Lemma~\ref{lem:6.3.1},
\[
\frac{M_p t_p}{p}
   \ls \int_0^{\infty} f(t) t^{p-1} \, dt
   \ls c \, \frac{M_p t_p}{\sqrt{p-1}}.
\]
By the definition of $K_p(f)$, this implies
\[
(M_p t_p)^{1/p}
   \ls \varrho_{K_p(f)}(\xi)
   \ls (M_p t_p)^{1/p} \left( \frac{cp}{\sqrt{p-1}} \right)^{1/p}
   \ls c_1 (M_p t_p)^{1/p}
\]
for all $p\gr 2$, where $c_1>0$ is an absolute constant. The left-hand side inequality holds for all $p>1$.

Since $M_p t_p = f(t_p\xi) t_p^p \ls t_p^p$, and from Lemma~\ref{lem:6.3.1} we have
$g(t_p) \ls p - 1$, it follows that
\[
M_p t_p = f(t_p\xi) t_p^{p-1} t_p \gr e^{-(p-1)} t_p^p\gr e^{-p}t_p^p.
\]
Thus,
\[
c_2 t_p \ls \varrho_{K_p(f)}(\xi) \ls c_1 t_p.
\]
Moreover, since $g(t_p) \ls p - 1 \ls g(2t_p)$, we also have
$f(2t_p\xi) \ls e^{-(p-1)} \ls f(t_p\xi)$, implying
\[
t_p \ls \varrho_{R_p(f)}(\xi) \ls 2t_p.
\]
Combining these estimates yields the desired inclusion.
\end{proof}

\begin{lemma}\label{lem:6.3.3}
Let $f \in \mathcal{L}^n$ with $f(0) = \|f\|_{\infty} = 1$.  
For any $p \gr 2$ we have
\[
K_p(\Delta_{\rm out} f) \approx K_p(f)_{{\rm out}}
\qquad
K_p(\Delta_{\rm in} f) \approx K_p(f)_{{\rm in}}.
\]
\end{lemma}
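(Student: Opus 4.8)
\textbf{Proof proposal for Lemma~\ref{lem:6.3.3}.}

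The plan is to reduce both equivalences to pointwise comparisons of radial functions, using Lemma~\ref{lem:6.3.2} to replace each Ball body $K_p(g)$ by the level set $R_p(g)$, which is far easier to handle because level sets interact transparently with the operations $\Delta_{\rm out}$ and $\Delta_{\rm in}$. Concretely, since $\Delta_{\rm out}f$ and $\Delta_{\rm in}f$ both attain their maximum value $1$ at the origin (as noted before the statement of Theorem~\ref{th:new-functional}), Lemma~\ref{lem:6.3.2} gives $K_p(\Delta_{\rm out}f)\approx R_p(\Delta_{\rm out}f)$ and $K_p(\Delta_{\rm in}f)\approx R_p(\Delta_{\rm in}f)$ for $p\gr2$, and also $K_p(f)\approx R_p(f)$. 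So it suffices to prove $R_p(\Delta_{\rm out}f)\approx R_p(f)_{\rm out}$ and $R_p(\Delta_{\rm in}f)\approx R_p(f)_{\rm in}$, with absolute constants; the passage from $R_p(f)_{\rm out}$ back to $K_p(f)_{\rm out}$ (and similarly for the inner regularization) is immediate because $A\mapsto {\rm conv}(A\cup(-A))$ and $A\mapsto A\cap(-A)$ are monotone under inclusion, so $c_1K_p(f)\subseteq R_p(f)\subseteq c_2K_p(f)$ forces $c_1 K_p(f)_{\rm out}\subseteq R_p(f)_{\rm out}\subseteq c_2 K_p(f)_{\rm out}$, and likewise for $(\cdot)_{\rm in}$.

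The inner case is essentially a set identity. Unwinding definitions, $R_p(\Delta_{\rm in}f)=\{x:\min\{f(x),f(-x)\}\gr e^{-(p-1)}\}=\{x:f(x)\gr e^{-(p-1)}\}\cap\{x:f(-x)\gr e^{-(p-1)}\}=R_p(f)\cap(-R_p(f))=R_p(f)_{\rm in}$, so here the equivalence is actually an equality. For the outer case I would argue by a two-sided inclusion at the level of radial functions. For the inclusion $R_p(\Delta_{\rm out}f)\subseteq c\,R_p(f)_{\rm out}$: if $\Delta_{\rm out}f(x)\gr e^{-(p-1)}$ then, since the supremum defining $\Delta_{\rm out}$ is attained (by log-concavity and properness of $f$), there exist $x_1,x_2$ with $x=\tfrac12(x_1+x_2)$ and $\sqrt{f(x_1)f(-x_2)}\gr e^{-(p-1)}$. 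One of the two factors is at least $e^{-(p-1)}$ — say $f(x_1)\gr e^{-(p-1)}$, the other case being symmetric after replacing $x_2$ by $-x_2$ — and then $f(-x_2)\gr e^{-2(p-1)}$, i.e.\ $-x_2\in R_{2p-1}(f)$. By the inclusion $R_q(f)\subseteq c_2K_q(f)$ and $K_q(f)\subseteq\frac{cq}{p}K_p(f)$ from \eqref{eq:inclusions-Kp} (with $q=2p-1\ls 2p$), together with $K_p(f)\subseteq\frac1{c_1}R_p(f)$, we get $-x_2\in C\,R_p(f)$ for an absolute constant $C$, hence $x_2\in -C\,R_p(f)\subseteq C\,R_p(f)_{\rm out}$; also $x_1\in R_p(f)\subseteq R_p(f)_{\rm out}$, and since $R_p(f)_{\rm out}$ is convex and symmetric, $x=\tfrac12(x_1+x_2)\in C\,R_p(f)_{\rm out}$. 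For the reverse inclusion $R_p(f)_{\rm out}\subseteq C'\,R_p(\Delta_{\rm out}f)$: it is enough to check that $R_p(f)\subseteq R_p(\Delta_{\rm out}f)$ and $-R_p(f)\subseteq R_p(\Delta_{\rm out}f)$ and then invoke convexity of $R_p(\Delta_{\rm out}f)$ (log-concavity of $\Delta_{\rm out}f$, as recorded in the text), which gives ${\rm conv}(R_p(f)\cup(-R_p(f)))\subseteq R_p(\Delta_{\rm out}f)$ with no loss. And indeed if $f(x)\gr e^{-(p-1)}$ then $\Delta_{\rm out}f(x)\gr\sqrt{f(x)f(-x)}$ — this direction needs a lower bound on $f(-x)$ as well, so instead I choose $x_1=x_2=x$ only when convenient; more robustly, taking $x_1=2x,\ x_2=0$ gives $x=\tfrac12(x_1+x_2)$ and $\Delta_{\rm out}f(x)\gr\sqrt{f(2x)f(0)}=\sqrt{f(2x)}$, which is the wrong side; the clean choice is $x_1=x,\ x_2=-x$, giving $\Delta_{\rm out}f(x)\gr\sqrt{f(x)f(x)}=f(x)\gr e^{-(p-1)}$, so $x\in R_p(\Delta_{\rm out}f)$, and symmetrically (using $x_1=-x,x_2=x$) $-x\in R_p(\Delta_{\rm out}f)$; thus $R_p(f)_{\rm out}\subseteq R_p(\Delta_{\rm out}f)$ with constant $1$.

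The main obstacle is the bookkeeping in the first inclusion of the outer case: one must pass from a level set at parameter $2p-1$ back to one at parameter $p$, and this is exactly where the hypothesis $p\gr2$ is used, so that $2p-1\ls 2p$ and the dilation factor $\frac{c(2p-1)}{p}$ in \eqref{eq:inclusions-Kp} is an absolute constant, and so that the two-sided form of Lemma~\ref{lem:6.3.2} (which requires $p\gr2$) is available. Everything else — the inner-case identity, the convexity/symmetry arguments for the easy inclusions, and transferring $\approx$ between $R_p$ and $K_p$ — is routine once this step is in place. I would present the inner case first (it is immediate), then the easy inclusion of the outer case via the $x_1=x$, $x_2=-x$ choice, and finally the harder inclusion with the $R_{2p-1}\hookrightarrow R_p$ argument.
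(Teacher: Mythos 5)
Your overall strategy coincides with the paper's: pass to the level sets $R_p$ via Lemma~\ref{lem:6.3.2}, observe that $R_p(\Delta_{\rm in}f)=R_p(f)\cap(-R_p(f))$ exactly, and for the harder outer inclusion exploit the fact that $\sqrt{f(x_1)f(-x_2)}\ge e^{-(p-1)}$ together with $\|f\|_\infty=1$ puts both $x_1$ and $-x_2$ in $R_{2p-1}(f)$, then use \eqref{eq:inclusions-Kp} (with the ratio $\Gamma(2p)^{1/(2p-1)}/\Gamma(p+1)^{1/p}$ bounded by an absolute constant for $p\ge 2$) to return to parameter $p$. That part is sound and essentially identical to the paper's argument; the paper simply notes directly that both factors are $\ge e^{-2(p-1)}$ rather than running your case split, but the content is the same.

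However, there is a genuine error in your ``easy'' inclusion $R_p(f)_{\rm out}\subseteq C'\,R_p(\Delta_{\rm out}f)$. The choice $x_1=x,\ x_2=-x$ does not satisfy the constraint $x=\tfrac12(x_1+x_2)$: it gives $\tfrac12(x_1+x_2)=0$, so what you have bounded below is $\Delta_{\rm out}f(0)$ (trivially $=1$), not $\Delta_{\rm out}f(x)$. Indeed the pointwise inequality $\Delta_{\rm out}f\ge f$, hence $R_p(f)\subseteq R_p(\Delta_{\rm out}f)$ with constant $1$, is false in general: for $f=\mathds{1}_K$ with $K$ a centered simplex one has $R_p(f)=K$ and $R_p(\Delta_{\rm out}f)=\tfrac12(K-K)$, and a vertex of $K$ lies outside $\tfrac12(K-K)$. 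The fix is minor but needed. Closest to your line: take $x_1=x,\ x_2=0$, which satisfies the constraint at the point $x/2$ and gives $\Delta_{\rm out}f(x/2)\ge\sqrt{f(x)f(0)}=\sqrt{f(x)}\ge f(x)$; hence $R_p(f)\subseteq 2R_p(\Delta_{\rm out}f)$, and by symmetry and convexity of $R_p(\Delta_{\rm out}f)$ also $R_p(f)_{\rm out}\subseteq 2R_p(\Delta_{\rm out}f)$. Alternatively, as the paper does, start from a pair $x_1,x_2\in R_p(f)$ and take $(y_1,y_2)=(x_1,-x_2)$: this lands at the midpoint $\tfrac12(x_1-x_2)$ with $\sqrt{f(x_1)f(x_2)}\ge e^{-(p-1)}$, giving $\tfrac12\bigl(R_p(f)-R_p(f)\bigr)\subseteq R_p(\Delta_{\rm out}f)$ and hence $R_p(f)_{\rm out}\subseteq R_p(f)-R_p(f)\subseteq 2R_p(\Delta_{\rm out}f)$.
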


\begin{proof}
Let $x \in K_p(\Delta_{\rm out} f)$.  
By Lemma~\ref{lem:6.3.2}, $c_1x \in R_p(\Delta_{\rm out} f)$.
Hence there exist $x_1, x_2 \in \mathbb{R}^n$ such that
$c_1x = \tfrac{1}{2}(x_1 + x_2)$ and
$f(x_1) f(-x_2) \gr e^{-2(p-1)}$.
Since $\|f \|_{\infty} = 1$, we obtain
$f(x_1) \gr e^{-2(p-1)}$ and $f(-x_2) \gr e^{-2(p-1)}$,
that is, $x_1 \in R_{2p-1}(f)$ and $x_2 \in R_{2p-1}(\overline{f})$.
Applying Lemma~\ref{lem:6.3.2} again gives
\[
c_1 x \in \tfrac{1}{2}\big(R_{2p-1}(f) + R_{2p-1}(\overline{f})\big)
   \subseteq c_2 \big( K_{2p-1}(f) - K_{2p-1}(f) \big)\approx K_{2p-1}(f)_{{\rm put}}\approx K_p(f)_{{\rm out}},
\]
where we also used $K_p(\overline{f}) = -K_p(f)$ and \eqref{eq:inclusions-Kp}.
Thus,
\[
K_p(\Delta_{\rm out} f)
   \subseteq c_3 K_p(f)_{{\rm out}}.
\]
Conversely, if $x_1, x_2 \in K_p(f)$, then $c_1 x_1 \in R_p(f)$ and
$-c_1 x_2 \in R_p(\overline{f})$. Hence,
\[
(\Delta_{\rm out} f)(c_1(x_1 - x_2))
   \gr \sqrt{ f(c_1 x_1) f(-c_1 x_2) }
   \gr e^{-(p-1)},
\]
which implies
\[
x_1 - x_2 \in c_1^{-1} R_p(\Delta_{\rm out} f)
   \subseteq (c_2 / c_1) K_p(\Delta_{\rm out} f).
\]
Therefore,
\[
K_p(f)_{{\rm out}}\subseteq K_p(f) - K_p(f) \subseteq c_4 K_p(\Delta_{\rm out} f).
\]

For the second assertion, observe that
\begin{align*}
R_p(\Delta_{\rm in} f)
   &= \{ x : \min\{ f(x), \overline{f}(x) \} \gr e^{-(p-1)} \} \\
   &= \{ x : f(x) \gr e^{-(p-1)} \}
      \cap \{ x : \overline{f}(x) \gr e^{-(p-1)} \}
      = R_p(f) \cap R_p(\overline{f}),
\end{align*}
and thus, by Lemma~\ref{lem:6.3.2},
\[
K_p(\Delta_{\rm in} f)
   \approx K_p(f) \cap K_p(\overline{f})
   = K_p(f) \cap (-K_p(f))=K_p(f)_{{\rm in}}.
\]
\end{proof}

%%%%%%%%%%% End of paper body %%%%%%%%%%%%%%%%%%%%%%%%%%%%%%%
\bigskip

\noindent {\bf Acknowledgement.} The author acknowledges support by a PhD scholarship
from the National Technical University of Athens.

\bigskip

\footnotesize
\bibliographystyle{amsplain}

\bigskip

\thanks{\noindent {\bf Keywords:} convex bodies, sections and projections, isotropic position, $MM^{\ast}$-estimate,
log-concave functions.

\smallskip

\thanks{\noindent {\bf 2020 MSC:} Primary 52A23; Secondary 46B06, 52A40, 26B25.}

\bigskip

\noindent \textsc{Natalia \ Tziotziou}: School of Applied Mathematical and Physical Sciences, National Technical University of Athens, Department of Mathematics, Zografou Campus, GR-157 80, Athens, Greece.

\smallskip

\noindent \textit{E-mail:} \texttt{nataliatz99@gmail.com}

\end{document}